\documentclass[oneside,reqno,a4paper,11pt]{amsart}
\usepackage{amsmath,amsthm,amssymb,amscd,soul}

\usepackage{graphicx}
\usepackage{hyperref}
\usepackage{mathrsfs}
\usepackage{cite}
\usepackage{color}
\usepackage{ifpdf}
\usepackage{fancyhdr}
\usepackage{multirow}
\usepackage{array}
\usepackage{appendix}
\usepackage{longtable}
\usepackage{tikz}


\hoffset -1.5cm\voffset -0.5cm \setlength{\parindent}{6mm}
\setlength{\parskip}{3pt plus1pt minus2pt}
\setlength{\baselineskip}{2pt plus10pt minus10pt}
\setlength{\textheight}{22true cm} \setlength{\textwidth}{15.5true
	cm}


\hoffset -1.5cm\voffset -0.5cm \setlength{\parindent}{6mm}
\setlength{\parskip}{3pt plus1pt minus2pt}
\setlength{\baselineskip}{2pt plus10pt minus10pt}
\setlength{\textheight}{22true cm} \setlength{\textwidth}{15.5true
	cm}


\newtheorem{theorem}{Theorem}[section]
\newtheorem{lemma}[theorem]{Lemma}
\newtheorem{corollary}[theorem]{Corollary}
\newtheorem{proposition}[theorem]{Proposition}

\theoremstyle{definition}
\newtheorem{definition}[theorem]{Definition}

\theoremstyle{remark}
\newtheorem{remark}[theorem]{Remark}

\numberwithin{equation}{section}

\newcommand{\beq}{\begin{equation}}
	\newcommand{\eeq}{\end{equation}}
\newcommand{\ben}{\begin{eqnarray}}
	\newcommand{\een}{\end{eqnarray}}
\newcommand{\beno}{\begin{eqnarray*}}
	\newcommand{\eeno}{\end{eqnarray*}}






\voffset=-0.2in
\numberwithin{equation}{section}
\subjclass[]{}
\keywords{}

\pagestyle{fancy}
\fancyhf{}


\fancyhead[LO]{\thepage}

\fancyhead[RE]{\thepage}

\fancyhead[CO, CE]{   }

\begin{document}
	
	\title{Regularity of the free boundary for a semilinear vector-valued minimization problem}
	\author{LILI DU$^{1,2}$}
	\author{YI ZHOU$^2$}
	\thanks{* This work is supported by National Nature Science Foundation of China Grant 11971331, 12125102, and Sichuan Youth Science and Technology Foundation 2021JDTD0024.}
	\thanks{$^1$ E-mail: dulili@szu.edu.cn \quad $^2$ E-mail: zhou\_yi@stu.scu.edu.cn}
	
	\maketitle
	
	\begin{center}
		$^1$College of Mathematical Sciences, Shenzhen University,
		
		Shenzhen 518061, P. R. China.
	\end{center}
	
	\begin{center}
		$^2$Department of Mathematics, Sichuan University,
		
		Chengdu 610064, P. R. China.
	\end{center}
	\begin{abstract}
		In this paper, we consider the following semilinear vector-valued minimization problem
		$$\min\left\{\int_{D}({|\nabla\mathbf{u}|}^2 + F(|\mathbf{u}|))dx: \ \ \mathbf{u}\in W^{1,2}(D; \mathbb{R}^m) \ \text{and} \ \mathbf{u}=\mathbf{g}\ \text{on} \ \partial D\right\}$$
		where $\mathbf{u}: D\to \mathbb{R}^m$ ($ m\geq 1$) is a vector-valued function, $D\subset \mathbb{R}^n$ ($n\geq 2$) is a bounded Lipschitz domain, $\mathbf{g}\in W^{1,2}(D; \mathbb{R}^m)$ is a given vector-valued function and $F:[0, \infty)\rightarrow \mathbb{R}$ is a given function. This minimization problem corresponds to the following semilinear elliptic system
		\begin{equation*}
			\Delta\mathbf{u}=\frac{1}{2}F'(|\mathbf{u}|)\cdot\frac{\mathbf{u}}{|\mathbf{u}|}\chi_{\{|\mathbf{u}|>0\}},
		\end{equation*}
		where $\chi_A$ denotes the characteristic function of the set A. The linear case that $F'\equiv 2$ was studied in the previous elegant work by Andersson, Shahgholian, Uraltseva and Weiss [Adv. Math 280, 2015], in which an epiperimetric inequality played a crucial role to indicate an energy decay estimate and the uniqueness of blow-up limit. However, this epiperimetric inequality cannot be directly applied to our case due to the more general non-degenerate and non-homogeneous term $F$ which leads to Weiss' boundary adjusted energy does not have scaling properties. Motivated by the linear case, when $F$ satisfies some assumptions, we establish successfully a new epiperimetric inequality, it can deal with term which is not scaling invariant in Weiss' boundary adjusted energy. As an application of this new epiperimetric inequality, we conclude that the free boundary $D\cap \partial\{|\mathbf{u}|>0\}$ is a locally $C^{1,\beta}$ surface near the regular points for some $\beta\in (0,1)$.

		\noindent{Keyword: } Free boundary; Vector-valued; Elliptic system; Regularity; Obstacle problem.
	\end{abstract}
	\maketitle

	\section{Introduction}

	In this paper,  we study the  semilinear vector-valued minimization problem
	\begin{equation}\label{eq1.0}\min\left\{\int_{D}{|\nabla\mathbf{u}|}^2+F(|\mathbf{u}|)dx: \ \ \mathbf{u}\in W^{1,2}(D; \mathbb{R}^m) \ \text{and} \ \mathbf{u}=\mathbf{g}\ \text{on} \ \partial D\right\},
	\end{equation}
	where $\mathbf{u}: D\to \mathbb{R}^m$ ($ m\geq 1$) is a vector-valued function, $\nabla=(D_{x_1},..., D_{x_n})$ denotes the standard gradient operator in $\mathbb{R}^n$, $D\subset \mathbb{R}^n$ ($n\geq 2$) is a bounded Lipschitz domain, $\mathbf{g}\in W^{1,2}(D; \mathbb{R}^m)$ is a given vector-valued function and $F :[0, \infty)\rightarrow \mathbb{R}$ is a given function. It is easy to show (see Appendix A) that this minimization problem has a unique minimizer $\mathbf{u} \in W^{2,p}(D;\mathbb{R}^m)$,  solving the following semilinear elliptic system
	\begin{equation}\label{eq1.1}
		\Delta\mathbf{u}=f(|\mathbf{u}|)\cdot\frac{\mathbf{u}}{|\mathbf{u}|}\chi_{\{|\mathbf{u}|>0\}},
	\end{equation}
	in $D$ with the given boundary data $\mathbf{g}\in W^{1,2}(D,\mathbb{R}^m)$, where $f(s)=\frac{1}{2}F'(s)$ and $\chi_A$ denotes the characteristic function of the set A.
	
	Our main purpose in this paper is to study the regularity of the free boundary $D\cap\partial\{ |\mathbf{u}|>0\}$. For simplicity, we shall denote $D\cap\partial\{ |\mathbf{u}|>0\}$ by $\Gamma(\mathbf{u}):=\partial\{ |\mathbf{u}|>0\}$.  For further analysis of the free boundary regularity, $\Gamma(\mathbf{u})$ can be divided into two parts
	$$\Gamma(\mathbf{u})= \Gamma_0(\mathbf{u})\cup \Gamma_1 (\mathbf{u}),$$
	where $\Gamma_0(\mathbf{u}):=\Gamma(\mathbf{u})\cap \{x\in D \mid |\nabla\mathbf{u}(x)|=0\}$ and $\Gamma_1(\mathbf{u}):=\Gamma(\mathbf{u})\cap \{x\in D \mid |\nabla\mathbf{u}(x)|\neq0\}$ represent the  degenerate part and the non-degenerate part, respectively. It should be noted that $\Gamma_1(\mathbf{u})$ is a locally $C^{1,\alpha}$-surface, as a direct result of the implicit function theorem, so that we are more concerned with the degenerate part where the gradient vanishes.
	
	\subsection{Background of the problem}\
	
	It is worth pointing out that the elliptic system \eqref{eq1.1} can be used to describe many important models in the areas of applied mathematics such as mathematical biology, etc.  Particularly, when $m=2$, a cooperative system can be given by the corresponding reaction-diffusion system
	\begin{align*}
		\begin{split}
			\left\{
			\begin{array}{lr}
				u_t-\Delta u = -f\left(\sqrt{u^2+v^2}\right)\displaystyle\frac{u}{\sqrt{u^2+v^2}},\\
				\ \\	
				v_t-\Delta v = -f\left(\sqrt{u^2+v^2}\right)\displaystyle\frac{v}{\sqrt{u^2+v^2}},\\
			\end{array}
			\right.
		\end{split}
	\end{align*}
	i.e.
	$$\mathbf{w}_t-\Delta \mathbf{w} = -f\left(|\mathbf{w}|\right)\displaystyle\frac{\mathbf{w}}{|\mathbf{w}|}, \ \text{for}\  |\mathbf{w}|>0,\quad\text{with}\quad  \mathbf{w}:=(u,v),$$
	which means that an increase in each species/reactant will decelerate the extinction/reaction of all species/reactants (see \cite{m02}). For example, in recent interesting works, J. Andersson et al.  \cite{asu} and G. Aleksanyan et al. \cite{afsw} dealt with the corresponding reaction-diffusion systems connected with $f(s)=1$ and $f(s)=s^{q} (0\leq q<1)$ respectively. Here, we would like to emphasize that the behavior of the solution for non-degenerate case $f(s)\geq c_0>0$ and degenerate case $f(0)=0$ are quite different, such as the optimal regularity of the solution, the decay rate near the free boundary and so on. Indeed, for the degenerate case, it can be shown that the decay rate of the solution is faster than quadratic order at the free boundary points. However, for the non-degenerate case, this assumption prevents degeneracy of solution, so we shall prove that the solution has optimal decay $r^2$, and not faster. Henceforth, in this paper, we will consider the more general non-degenerate case (see assumption \eqref{eq1.3}) and show the regularity of the free boundary of the vector-valued problem \eqref{eq1.0}.
	
	Due to the importance of such models mentioned above, the elliptic system \eqref{eq1.1}, especially regularity of its free boundary, has attracted much attention from many mathematicians (see \cite{afsw, asu, fsw} and the references therein).  For the scalar case ($m=1$),  the elliptic system \eqref{eq1.1} is reduced to the following obstacle-problem-like equation
	$$\Delta u= f(u) \chi_{\{u>0\}} - f(-u)\chi_{\{u<0\}},$$
	which corresponds to a class of two-phase free boundary problems. For the linear case, $f(s)$ is a positive constant, N. Uraltseva et al. \cite{u} derived the boundedness of $D^2u$ of solutions using the Alt--Caffarelli--Friedman monotonicity formula  (see \cite{acf2,ca3}). A few years later, H. Shahgholian et al. in  \cite{suw04} have given a complete characterization of all global two-phase solutions with quadratic growth both at $0$ and infinity, using the Alt--Caffarelli--Friedman monotonicity formula and the so-called Weiss-type monotonicity formula which was established in the ground-breaking paper \cite{w99} by G. S. Weiss for the classical obstacle problem. In \cite{suw07}, H. Shahgholian et al. considered the following case
	$$\Delta u= \lambda_{+}(x) \chi_{\{u>0\}} - \lambda_{-}(x) \chi_{\{u<0\}},$$
	where $\lambda_{\pm}(x) $ are both positive Lipschitz functions. They proved that the free boundary is in a neighbourhood of each "branch point" the union of two $C^1$- graphs.  In 2017, M. Fotouhi, H. Shahgholian \cite{fs} studied the sublinear case
	$$\Delta u= \lambda_{+}(x) (u^{+})^{q} - \lambda_{-}(x) (u^{-})^{q},$$
	where $0<q<1$, $u^{\pm}=\max\{0,\pm u\}$ and $\lambda_{\pm}(x) $ are both positive Lipschitz functions, and proved that if a solution is close to one-dimensional solution in a small ball, the free boundary can be represented locally as two $C^1$-regular graphs $\partial\{u>0\}$ and $\partial\{u<0\}$, tangential to each other.
	
	As for the vector-valued case ($m>1$), the situation is much more complicated than that of the scalar case. One of the main difficulties is that a corresponding version of the Alt--Caffarelli--Friedman monotonicity formula seems to be unavailable in the vector-valued problem. Motivated by the result of W. H. Fleming \cite{f}, he considered	the problem of finding a minimal surface with given the same boundary (also see \cite{re}). Then G. S. Weiss first applied the new epiperimetric-type approach to study the regularity of the free boundary of classical obstacle problem \cite{w99}. And it has been extended to more sophisticated problems, involving for example vector-valued free boundary problems \cite{afsw, asu, fsw}. One special interest for us is the important recent breakthrough work \cite{asu}, which considered the linear case ($f(s)=1$)
	$$\Delta \mathbf{u}= \frac{\mathbf{u}}{|\mathbf{u}|} \chi_{\{|\mathbf{u}|>0\}}, $$
	and proved that the free boundary $\partial\{|\mathbf{u}|>0\}$ is  a $C^{1,\beta}$-surface for some $\beta>0$ in an open neighbourhood of the set of regular free boundary points (we say $x\in\Gamma(\mathbf{u})$ is a {\it regular free boundary point} if at least exist one blow up limit of $\mathbf{u}$ at $x$ is a half-space solution). For this case, D. D. Silva et al. \cite{sjs} considered the almost minimizers for \eqref{eq1.0}, for which they obtained the regularity of both at almost minimizers and the regular part of the free boundary.
	Later, M. Fotouhi et al. \cite{fsw}  and  D. D. Silva et al. \cite{sjs-1} considered a sublinear vector case
	\begin{equation}\label{f}
		\Delta\mathbf{u}=f(|\mathbf{u}|)\frac{\mathbf{u}}{|\mathbf{u}|}\chi_{\{|\mathbf{u}|>0\}},
	\end{equation}
	for $f(|\mathbf{u}|)=\lambda_{+}|\mathbf{u}^{+}|^{q}+\lambda_{-}|\mathbf{u}^{-}|^{q}$,
	where $\lambda_{\pm}(x) $ are both positive H\"{o}lder continuous functions, and $0<q<1$, with $\mathbf{u}: B_1\subset \mathbb{R}^n\to \mathbb{R}^m, n\geq 2, m\geq 1$, and $\mathbf{u}^{\pm}=(u_1^{\pm},...,u_m^{\pm})$, $u_i^{\pm}=\max\{0,\pm u_i\}$. Using the approach of epiperimetric inequality, for the minimizer and almost minimizers, respectively, they obtained that the free boundary is a locally $C^{1,\beta}$ surface near regular points with some $\beta>0$. As we mentioned before, in \cite{fsw,w00} the problem \eqref{f} for the case $f(s)=s^q \left( \text{for}\ q\in(0,1)\right)$ belongs to the degenerate case ($f(0)=0$), then the optimal regularity of the solution should be $C^{[\kappa],\kappa-[\kappa]}$, where $\kappa=\frac{2}{1-q}$, and the decay rate of solution is $r^\kappa$ near the free boundary. However, in this paper, we will investigate the non-degenerate case $f(s)\geq c_0>0$, and will show the optimal regularity is $C^{1,\alpha}$ and the decay rate is in fact $r^2$ near the free boundary. Moreover, based on the basic properties of the solution and the free boundary, we can show that the free boundary is indeed $C^{1,\beta}$ near the regular points.

	\subsection{Basic assumptions and definitions} \
	
	In this paper, we need to restrict the nonlinear function $F(s)$ to a reasonable class. More precisely, we assume that $F(s)\in C^{2, \alpha}\left([0,\infty)\right)$ for some $\alpha\in (0, 1)$ and satisfies
	\begin{equation}\label{eq1.3}
		F(0)= 0, \  c_0 \leq f(s):=\frac{1}{2}F'(s)\leq C_0 \quad \text{and} \quad 0\leq F''(s) \leq C_0\quad \text{for}\quad s\geq 0,
	\end{equation}
	where $c_0, C_0$ are two given positive constants.

	\begin{remark}\label{rem1.1}
		Due to the convexity of $F$,  it is easy to see
		$$2f(0)s=F'(0)s\leq F(s)\leq F'(s)s=2f(s)s, \quad\text{for}\quad s\geq 0.$$
	\end{remark}
	\begin{proposition}\label{rem1.2}\
		With the assumptions above, the system \eqref{eq1.1} has a unique minimizer $\mathbf{u}\in W^{2,p}(D;\mathbb{R}^m), \ \text{for any}\ p\in [1,\infty)$.
	\end{proposition}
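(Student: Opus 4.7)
My plan is to handle the proposition in three parts: existence via the direct method, uniqueness via strict convexity, and the $W^{2,p}$ regularity via the Euler--Lagrange system combined with $L^p$ elliptic theory.

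\emph{Existence.} Assumption \eqref{eq1.3} gives $F\geq 0$ on $[0,\infty)$ (since $F(0)=0$ and $F'\geq 2c_0>0$), so for any admissible competitor $\mathbf{v}\in \mathbf{g}+W_0^{1,2}(D;\mathbb{R}^m)$ we have $J(\mathbf{v}):=\int_D(|\nabla\mathbf{v}|^2+F(|\mathbf{v}|))\,dx\geq \int_D|\nabla\mathbf{v}|^2\,dx$, and the Poincar\'e inequality on this affine space supplies coercivity. Along a minimizing sequence $\mathbf{u}_k$, weak compactness in $W^{1,2}$ produces a subsequence $\mathbf{u}_k\rightharpoonup\mathbf{u}$; Rellich--Kondrachov provides $\mathbf{u}_k\to\mathbf{u}$ a.e.\ in $D$ (along a further subsequence). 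Weak lower semicontinuity of the Dirichlet term together with Fatou's lemma applied to the nonnegative integrand $F(|\mathbf{u}_k|)$ yields $J(\mathbf{u})\leq\liminf_kJ(\mathbf{u}_k)$, so $\mathbf{u}$ is a minimizer.

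\emph{Uniqueness.} The gradient term $\mathbf{v}\mapsto|\nabla\mathbf{v}|^2$ is strictly convex in $\nabla\mathbf{v}$, and $\mathbf{v}\mapsto F(|\mathbf{v}|)$ is convex as the composition of the nondecreasing convex function $F$ (from $F''\geq 0$ and $F'\geq 2c_0>0$) with the convex norm $|\cdot|$. Given two minimizers $\mathbf{u}_1,\mathbf{u}_2$ sharing the trace $\mathbf{g}$, the midpoint $\tfrac12(\mathbf{u}_1+\mathbf{u}_2)$ is admissible and minimality combined with strict convexity in $\nabla\mathbf{v}$ force $\nabla\mathbf{u}_1=\nabla\mathbf{u}_2$ a.e.; since $\mathbf{u}_1-\mathbf{u}_2\in W^{1,2}_0$ has vanishing gradient, Poincar\'e gives $\mathbf{u}_1=\mathbf{u}_2$.

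\emph{Regularity.} Testing minimality against $\boldsymbol{\phi}\in C_c^\infty(D;\mathbb{R}^m)$, computing $\tfrac{d}{d\epsilon}J(\mathbf{u}+\epsilon\boldsymbol{\phi})|_{\epsilon=0}=0$ yields the Euler--Lagrange system \eqref{eq1.1} in the distributional sense. The key structural input from \eqref{eq1.3} is the pointwise bound $\bigl|f(|\mathbf{u}|)\tfrac{\mathbf{u}}{|\mathbf{u}|}\chi_{\{|\mathbf{u}|>0\}}\bigr|\leq C_0$, so $\Delta\mathbf{u}\in L^\infty_{\text{loc}}(D)$; componentwise interior $L^p$ Calder\'on--Zygmund estimates then upgrade $\mathbf{u}$ to $W^{2,p}_{\text{loc}}(D;\mathbb{R}^m)$ for every $p\in[1,\infty)$, and global $W^{2,p}$ follows under standard assumptions on $\mathbf{g}$ and $\partial D$.

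The main delicacy I anticipate is making the Euler--Lagrange derivation rigorous on the coincidence set $\{|\mathbf{u}|=0\}$, since $\mathbf{v}\mapsto F(|\mathbf{v}|)$ is only one-sided differentiable there. On the open set $\{|\mathbf{u}|>0\}$ the differentiation is classical; for the remaining set I would use the uniform Lipschitz estimate $|F(|\mathbf{u}+\epsilon\boldsymbol{\phi}|)-F(|\mathbf{u}|)|\leq 2C_0\epsilon|\boldsymbol{\phi}|$ (from $F'\leq 2C_0$) together with dominated convergence to pass $\epsilon\to 0^{\pm}$ in the difference quotient, and conclude that the factor $\chi_{\{|\mathbf{u}|>0\}}$ on the right-hand side of \eqref{eq1.1} records exactly the vanishing contribution from the zero set, so no singular free-boundary term slips into the weak formulation.
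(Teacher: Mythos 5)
Your proposal is correct and largely parallels the paper's Appendix~A, with one genuinely different step. For existence you both use the direct method; for regularity you both derive the Euler--Lagrange system, observe that $|f(|\mathbf{u}|)\,\mathbf{u}/|\mathbf{u}|\,\chi_{\{|\mathbf{u}|>0\}}|\leq C_0$ so $\Delta\mathbf{u}\in L^\infty_{\mathrm{loc}}$, and invoke Calder\'on--Zygmund. The divergence is in the uniqueness step. You argue directly at the level of the functional: $\int_D|\nabla\cdot|^2$ is strictly convex and $F(|\cdot|)$ is convex, so the midpoint of two minimizers forces $\nabla\mathbf{u}_1=\nabla\mathbf{u}_2$ a.e., and Poincar\'e on $W^{1,2}_0$ finishes. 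The paper instead assumes two \emph{solutions of the system \eqref{eq1.1}} with the same boundary data, tests the difference of the two weak formulations with $\boldsymbol{\phi}=\mathbf{u}-\mathbf{v}$, and uses the pointwise monotonicity estimate
\[
\Big(f(|\mathbf{u}|)\tfrac{\mathbf{u}}{|\mathbf{u}|}-f(|\mathbf{v}|)\tfrac{\mathbf{v}}{|\mathbf{v}|}\Big)\cdot(\mathbf{u}-\mathbf{v})\ \geq\ (|\mathbf{u}|-|\mathbf{v}|)\big(f(|\mathbf{u}|)-f(|\mathbf{v}|)\big)\ \geq\ 0
\]
(Cauchy--Schwarz plus $f$ nondecreasing) to force $\nabla(\mathbf{u}-\mathbf{v})\equiv 0$. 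Your route is the more elementary one and directly addresses ``unique minimizer''; the paper's route proves the superficially stronger statement that the \emph{PDE} with given boundary data has a unique $W^{1,2}$ solution, though since the functional is convex the two statements are equivalent. Two small cautions that apply to both your write-up and the paper's: first, the difference-quotient argument on $\{|\mathbf{u}|=0\}$ only yields $|\Delta\mathbf{u}|\leq C_0$ in the sense of measures there (the one-sided derivatives of $t\mapsto F(|t\boldsymbol{\phi}|)$ at $t=0$ are $\pm F'(0)|\boldsymbol{\phi}|$, not zero), and the exact form $\Delta\mathbf{u}=0$ a.e.\ on $\{|\mathbf{u}|=0\}$ is recovered only \emph{after} you have $\mathbf{u}\in W^{2,p}_{\mathrm{loc}}$ via the usual Lebesgue-density argument — you flag this delicacy, which is good, but the ``dominated convergence closes it'' sentence slightly undersells the extra step. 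Second, with $D$ only Lipschitz and $\mathbf{g}\in W^{1,2}$ one can only get $W^{2,p}_{\mathrm{loc}}$, which is also all the paper's appendix actually establishes despite the proposition's wording; your final clause ``global $W^{2,p}$ under standard assumptions'' correctly hedges this.
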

	\begin{proof}
		For a complete proof of this fact see the Appendix A.
	\end{proof}

	\begin{remark}
		Note that for the scalar case corresponding to the classical obstacle problem (see \cite{psu}), the optimal regularity of the solution is $C^{1,1}(D)$. Then for the vector case, we notice that J. Andersson et al. have proposed the open problem whether $\mathbf{u}$ belongs to $W^{2, \infty}(D;\mathbb{R}^m)$ or not of the linear case in \cite{asu}. However, for our more general vector case, the optimal regularity of $\mathbf{u}$ can not achieve $W^{2, \infty}(D;\mathbb{R}^m)$. Even for the scalar case, it is already realized that there is a counterexample in \cite{hl}. Specifically, in the $n-$dimensional ball $B_R=B_R(0)$ of radius $R<1$, we consider the following equation
		$$ \Delta u=\frac{x_2^2-x_1^2}{2|x|^2}\left\{\frac{n+2}{(-\log|x|)^{\frac{1}{2}}}+\frac{1}{2(-\log|x|)^{\frac{3}{2}}}\right\},$$
		the solution $u(x)\in C(\bar{B}_R)\cap C^{\infty}(\bar{B}_R \backslash\{0\})$ but $u(x)$ is not in $W^{2,\infty}(B_R)$ that has been proved in \cite{hl}.
	\end{remark}

	In the following, some important notions will be given before stating our main results for the convenience. Firstly, we denote the energy of \eqref{eq1.0} in the domain $D$ and the unit ball by
	$$E(\mathbf{u}):=\int_{D}{|\nabla\mathbf{u}|}^2+F(|\mathbf{u}|)dx,$$
	and
	$$E(\mathbf{u},x_0,1):=\int_{B_1(x_0)}{|\nabla\mathbf{u}|}^2+F(|\mathbf{u}|)dx,$$
	respectively. Now we give the so-called {\it Weiss' boundary adjusted energy} and {\it energy density}  which are introduced in \cite{w99}.
	
	\begin{definition} {\it(Weiss' boundary adjusted energy) }\label{weissenergy}
		Let $\mathbf{u}$ be a solution of \eqref{eq1.1} in $B_{r_0}(x^0)$. Then one can define
		$$ W(\mathbf{u},x^0,r):=\frac{1}{r^{n+2}}\int_{B_r(x^0)}|\nabla\mathbf{u}|^2+F(|\mathbf{u}|)dx -\frac{2}{r^{n+3}}\int_{\partial B_r(x^0)}|\mathbf{u}|^2 d\mathcal{H}^{n-1}, $$
		for $0<r\leq r_0$, where $B_r(x^0)$ denotes the open ball in $\mathbb{R}^n$ centered at $x^0$ and radius $r$, and $\mathcal{H}^k$ denotes $k-$dimensional Hausdorff-measure.
	\end{definition}
	
	\begin{remark}
		Let $\mathbf{u}(x)$ be the solution of \eqref{eq1.1} in $B_{r_0}(x^0)$, based on the Non-degeneracy (Proposition \ref{non-degeneracy}) and Growth estimate (Proposition \ref{growth} ), we define the following rescaled solution at the free boundary point $x^0$,
		$$\mathbf{u}_{x^0,r}:=\displaystyle\frac{\mathbf{u}(x^0+rx)}{r^2}.$$
	\end{remark}
		After scaling, Weiss' boundary adjusted energy can be written as follows,
		$$W(\mathbf{u},x^0,r)=\int_{B_1(0)}|\nabla \mathbf{u}_{x^0,r} |^2+\frac{1}{r^2}F(r^2 |\mathbf{u}_{x^0,r}|)dx-2\int_{\partial B_1}|\mathbf{u}_{x^0,r}|^2d \mathcal{H}^{n-1}.$$
		For simplicity, let
		$$H(\mathbf{v},r):=\int_{B_1}|\nabla \mathbf{v}|^2+\frac{1}{r^2}F(r^2 |\mathbf{v}|)dx-2\int_{\partial B_1}|\mathbf{v}|^2d \mathcal{H}^{n-1},$$
		where $B_1:=B_1(0).$
		According to \cite[Theorem 1.1]{l22} and the monotonicity of Weiss's energy functional (Lemma \ref{monotonicity}), it follows that  the limit $\displaystyle\lim_{r\rightarrow 0+} H(\mathbf{u}_{x^0,r},r)$ exists  and the convexity of $F$ (recalling Remark \ref{rem1.1}) implies that
		$$\displaystyle\lim_{r\rightarrow 0+} H(\mathbf{u}_{x^0,r},r)=\int_{B_1}|\nabla \mathbf{u}_0|^2+2f(0)|\mathbf{u}_0|dx-2\int_{\partial B_1}|\mathbf{u}_0|^2d \mathcal{H}^{n-1},$$
		where $\mathbf{u}_0$ denotes the blow-up limit of $\mathbf{u}_{x^0,r}$.
		We also denote
		$$M(\mathbf{v}):=\int_{B_1}|\nabla \mathbf{v}|^2+2f(0)|\mathbf{v}|dx-2\int_{\partial B_1}|\mathbf{v}|^2d \mathcal{H}^{n-1}, \quad\text{for any}\ \mathbf{v}\in \mathbb{R}^m,$$
		i.e.
		$$\displaystyle\lim_{r\rightarrow 0+} H(\mathbf{u}_{x^0,r},r)=M(\mathbf{u}_0).$$
		The limit $\displaystyle\lim_{r\rightarrow 0+} H(\mathbf{u}_{x^0,r},r)$ is called the energy density of $\mathbf{u}$ at $x^0$.

	To study the regularity of free boundary, we will define {\it half space solutions} and {\it regular free boundary points}. After scaling, the equation \eqref{eq1.1} leads to the blow-up limit $\mathbf{u}_0$ satisfying the following equation (the detailed information can be found in \eqref{eq3.7})
	
	\begin{equation*}
		\Delta \mathbf{u}_0 = f(0)\frac{\mathbf{u}_0}{|\mathbf{u}_0|}\chi_{\{|\mathbf{u}_0|>0\}},
	\end{equation*}
	
	thus, we provide the definition of a half-space solution.
	\begin{definition} {\it(Half space solutions)}
		The set of half space solutions is given by
		\begin{equation}\label{eq1.4}
			\mathbb{H}:=\left\{\frac{f(0)\text{max}(x\cdot\nu,0)^2}{2}\mathbf{e}: \nu \ \text{unit\ vector\ of }\ \mathbb{R}^n, \mathbf{e} \ \text{unit\ vector\ of} \ \mathbb{R}^m\right\}.
		\end{equation}
	\end{definition}
	\begin{remark} \label{rem1.7}
		As a matter of fact, the energy density value  $$M\Big(\frac{f(0)\text{max}(x\cdot\nu,0)^2}{2}\mathbf{e}\Big)$$ does not depend on the choices of $\nu$ and $\mathbf{e}$, denoted as $\displaystyle\frac{\alpha_n}{2}$, where $\alpha_n$ is a constant, that will be shown in Section 4.
	\end{remark}

	Before, definition the set of free boundary points at which at least one blow-up limit coincides with a half-plane solution, now, we will turn out to be the fact that the half-plane solutions take a lower energy level that any other homogeneous solution of degree 2.

	\begin{definition}{\it(Regular free boundary points)}\label{regular}
		A point $x^0$ is called a regular free boundary point for $\mathbf{u}$ provided that
		$$x^0\in\Gamma_0(\mathbf{u}):=\partial\{ |\mathbf{u}|>0\} \cap \{\nabla\mathbf{u}(x^0)=0\}\qquad \text{and} \qquad \displaystyle\lim_{r\rightarrow 0}W(\mathbf{u},x^0,r)=\displaystyle\frac{\alpha_n}{2},$$
		here we denote by $\mathcal{R}_{\mathbf{u}}$ the set of all {\it regular free boundary points} of $\mathbf{u}$ in $B_1$.
	\end{definition}

	\begin{remark}
		In this work, to investigate the regularity of free boundary, we mainly consider the regular set $\mathcal{R}_{\mathbf{u}}$. As for the singular set of free boundary points, the analysis of regularity is still unknown up to now and this would be our future research direction.
	\end{remark}
	
	\subsection{Main results and plan of this paper}\
	
	Our main result concerning the regularity of the free boundary $\Gamma(\mathbf{u})$ is presented in the following theorem.
	\begin{theorem}{(Regularity)}\label{regularity}
		The free boundary $\Gamma(\mathbf{u})$ is in an open neighbourhood of the regular points set $\mathcal{R}_{\mathbf{u}}$ locally a $C^{1,\beta}$-surface. Here $\beta=\frac{(n+2)\kappa}{2(1-\kappa)}(1+\frac{(n+2)\kappa}{2(1-\kappa)})^{-1}$, for some $\kappa\in(0,1)$.
	\end{theorem}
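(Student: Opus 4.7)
The plan follows the Weiss \cite{w99} and Andersson--Shahgholian--Uraltseva--Weiss \cite{asu} blueprint: establish an epiperimetric inequality at the regular set, combine it with Weiss' monotonicity (Lemma \ref{monotonicity}) to force an algebraic decay of $e(r):=W(\mathbf{u},x^0,r)-\alpha_n/2$, upgrade this to a H\"older-in-scale convergence of the rescalings $\mathbf{u}_{x^0,r}$ to a unique blow-up $\mathbf{u}_0^{x^0}\in\mathbb{H}$, and finally translate this into H\"older-in-basepoint continuity of the blow-up normal $\nu(x^0)$ on $\mathcal{R}_{\mathbf{u}}$. The main obstacle is the first step: since the rescaled energy
$$H(\mathbf{v},r)=\int_{B_1}|\nabla\mathbf{v}|^2+\frac{F(r^2|\mathbf{v}|)}{r^2}\,dx-2\int_{\partial B_1}|\mathbf{v}|^2\,d\mathcal{H}^{n-1}$$
is not $r$-invariant, the construction of \cite{asu} cannot be invoked verbatim, and the $r$-dependence must be absorbed carefully.

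Step 1 (the new epiperimetric inequality, the hard part). The target is: there exist $\kappa\in(0,1)$, $\delta>0$ and $r_0>0$ such that, whenever $r\in(0,r_0]$ and $\mathbf{c}\in W^{1,2}(B_1;\mathbb{R}^m)$ is a $2$-homogeneous function with $\inf_{\mathbf{h}\in\mathbb{H}}\|\mathbf{c}-\mathbf{h}\|_{W^{1,2}(B_1)}\le\delta$, one can build $\mathbf{v}$ sharing the trace of $\mathbf{c}$ on $\partial B_1$ with
$$H(\mathbf{v},r)-\frac{\alpha_n}{2}\le (1-\kappa)\Big(H(\mathbf{c},r)-\frac{\alpha_n}{2}\Big).$$
The proof would proceed by compactness and linearization: supposing failure along sequences $r_j\to 0$ and $\mathbf{c}_j\to \mathbf{h}_0\in\mathbb{H}$, take minimizers $\mathbf{v}_j$ of $H(\cdot,r_j)$ under the boundary constraint and normalize $\mathbf{w}_j:=(\mathbf{v}_j-\mathbf{h}_0)/\varepsilon_j$ with $\varepsilon_j^2:=M(\mathbf{c}_j)-\alpha_n/2$. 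The replacement for the scaling step of \cite{asu} is the Taylor bound
$$\Big|\frac{F(r^2 s)}{r^2}-2f(0)\,s\Big|\le C_0\,r^2\,s^2,\qquad s\ge 0,$$
valid thanks to $F\in C^{2,\alpha}$ and $|F''|\le C_0$ in \eqref{eq1.3}; this drives the $H$--$M$ discrepancy to $o(\varepsilon_j^2)$, so the limit $\mathbf{w}_\ast$ of $\mathbf{w}_j$ solves a linearized obstacle-type equation around $\mathbf{h}_0$ whose spectral analysis produces a strict improvement, contradicting the assumed failure.

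Step 2 (energy decay and unique blow-up). Comparing $\mathbf{u}_{x^0,r}$ with the Step-1 competitor against the $2$-homogeneous extension of $\mathbf{u}_{x^0,r}|_{\partial B_1}$, and invoking the minimality of $\mathbf{u}_{x^0,r}$ together with Lemma \ref{monotonicity}, one obtains a differential inequality
$$e(r)\le \frac{(1-\kappa)\,r}{n+2}\,e'(r)+O(r^{2+\sigma})$$
for some $\sigma>0$, whose Gronwall-type integration yields $e(r)\le C\,r^\gamma$ with $\gamma=(n+2)\kappa/(1-\kappa)$. The integral form of the monotonicity formula then gives
$$\int_s^r\frac{1}{\rho}\Big\|\tfrac{d}{d\rho}\mathbf{u}_{x^0,\rho}\Big\|_{L^2(\partial B_1)}^2\,d\rho\le e(r)-e(s)+o(1)\le C\,r^\gamma,$$
so by Cauchy--Schwarz the blow-up $\mathbf{u}_0^{x^0}\in\mathbb{H}$ is unique with $\|\mathbf{u}_{x^0,r}-\mathbf{u}_0^{x^0}\|_{L^2(\partial B_1)}\le C\,r^{\gamma/2}$, a rate that elliptic regularity for \eqref{eq1.1} upgrades to $C^{1,\alpha}$-convergence on $\overline{B_{1/2}}$.

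Step 3 ($C^{1,\beta}$ of the free boundary). For $x^0,y^0\in\mathcal{R}_{\mathbf{u}}$ with $d:=|x^0-y^0|$ small, combine the translation identity $\mathbf{u}_{y^0,r}(x)=\mathbf{u}_{x^0,r}\bigl(x+(y^0-x^0)/r\bigr)$, the $C^{1,\alpha}$ bound on $\mathbf{u}$ coming from Proposition \ref{rem1.2} (which controls translations by $O(d/r)$), and the Step 2 rate applied at both base points; optimizing $r$ in the balance between $r^{\gamma/2}$ and $d/r$ yields
$$|\nu(x^0)-\nu(y^0)|\le C\,d^{\beta},\qquad \beta=\frac{(n+2)\kappa}{2(1-\kappa)}\Big(1+\frac{(n+2)\kappa}{2(1-\kappa)}\Big)^{-1}.$$
Upper-semicontinuity of $x\mapsto\lim_{r\to 0^+}W(\mathbf{u},x,r)$ shows $\mathcal{R}_{\mathbf{u}}$ is open in $\Gamma(\mathbf{u})$; combined with the H\"older continuity of the normal and a standard flatness/implicit function argument, $\Gamma(\mathbf{u})$ is a local $C^{1,\beta}$-graph near $\mathcal{R}_{\mathbf{u}}$. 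The genuinely novel work is Step 1; Steps 2--3 reproduce the structural arguments of \cite{asu,fsw,w99}, with the non-homogeneous $F$-error treated as a lower-order perturbation throughout.
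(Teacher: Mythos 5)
Your Steps 1--2 follow the paper's route (epiperimetric inequality by contradiction and linearization, then a Gronwall-type decay of $W(\mathbf{u},x^0,r)-\alpha_n/2$, then a dyadic Cauchy--Schwarz argument for uniqueness of the blow-up with rate $r^{\gamma/2}$, $\gamma=\frac{(n+2)\kappa}{1-\kappa}$), and the exponent comes out the same; the minor discrepancies (normalizing by the energy gap $\varepsilon_j^2:=M(\mathbf{c}_j)-\alpha_n/2$ rather than by $\delta_k:=\mathrm{dist}_{W^{1,2}}(\mathbf{c}_k,\mathbb{H})$, and not spelling out the restriction $s\le\delta^2$ that keeps the Taylor remainder of $F$ subordinate to the leading energy) are cosmetic and could be repaired.

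The genuine gap is in Step 3, in the translation estimate. You claim that the $C^{1,\alpha}$ regularity from Proposition \ref{rem1.2} controls the difference $\mathbf{u}_{x^0,r}-\mathbf{u}_{y^0,r}$ by $O(d/r)$, $d=|x^0-y^0|$. It does not. From mere boundedness of $\nabla\mathbf{u}$ you only get
$$\frac{1}{r^2}\bigl|\mathbf{u}(x^0+rx)-\mathbf{u}(y^0+rx)\bigr|\le \frac{\sup|\nabla\mathbf{u}|}{r^2}\,d=O\!\left(\frac{d}{r^2}\right),$$
and even using $\nabla\mathbf{u}(x^0)=0$ plus the $C^{0,\alpha}$ modulus of $\nabla\mathbf{u}$ you only get $|\nabla\mathbf{u}(z)|\lesssim |z-x^0|^\alpha\lesssim r^\alpha$ on the relevant segment, hence $O(d/r^{2-\alpha})$; since the paper explicitly notes $\mathbf{u}\notin W^{2,\infty}$ in general, $\alpha<1$ is unavoidable. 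Balancing $r^{\gamma/2}$ against $d/r^{2-\alpha}$ produces $\beta'=\frac{\gamma/2}{\gamma/2+2-\alpha}<\frac{\gamma/2}{\gamma/2+1}=\beta$, so your argument does not reach the exponent in the statement. What is actually needed, and what the paper uses in Claim 1 of its proof, is the quadratic growth estimate of Proposition \ref{growth}, namely $|\nabla\mathbf{u}(z)|\le C\,\mathrm{dist}(z,\Gamma_0(\mathbf{u}))$. Since $x^0\in\Gamma_0(\mathbf{u})$, this gives $|\nabla\mathbf{u}(z)|\lesssim \max(r,d)$ along the segment, whence the translation error is $O(d/r)$ for $d\le r$, and only then does the optimization $r=d^{(1+\gamma/2)^{-1}}$ yield $\beta=\frac{\gamma/2}{1+\gamma/2}$. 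This is not an interchangeable ingredient: without Proposition \ref{growth} your Step 3 fails to produce the theorem's H\"older exponent. (You also compress the paper's Claims 2--3 — trapping the free boundary in a cone and then showing it is a Lipschitz, hence $C^{1,\beta}$, graph — into the phrase ``standard flatness/implicit function argument''; that is acceptable at the level of an outline, but it tacitly relies on the openness of $\mathcal{R}_{\mathbf{u}}$ in $\Gamma_0(\mathbf{u})$ from Corollary \ref{coro5.8} and on the uniform-in-$x^0$ version of the decay, i.e.\ Lemma \ref{regularity1}, which your sketch does not isolate.)
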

	
	\begin{remark}
		The parameter $\kappa$ is given by the {\it epiperimetric inequality} in Theorem \ref{epiperimetric}, and it is easy to show that $\beta\in (0,1)$.
	\end{remark}
	
	\begin{remark} The proof of the regularity of free boundary (Theorem \ref{regularity}) main relies on the growth estimate and the uniqueness of blow-up limit. The epiperimetric inequality (contraction of energy) is the key to prove the uniqueness of blow-up limit.
	\end{remark}
	
	\begin{remark}
	
	It is noteworthy that in our more general non-degenerate case, the decay rate of the solution near the free boundary is $r^2$. However, it is possible that $F(u)$ does not have scaling properties. This is in stark contrast to the situations presented in \cite{afsw,asu,fsw}. Consequently, in our research, overcoming some essential difficulties after scaling, particularly in the establishment of epiperimetric inequalities, becomes imperative.
	\end{remark}
	
	The main goal of this paper is devoted to the proof of Theorem \ref{regularity} and  our approach  is  inspired by the celebrated work \cite{asu}, depending on the establishment of the {\it epiperimetric inequality}. The remaining part of our paper is organized as follows.
	
	Section 2 is devoted to establish some important properties of $\mathbf{u}$. More precisely, Proposition \ref{bounded} gives $L^{\infty}$ estimates for $\mathbf{u}$ and $\nabla\mathbf{u}$, and Proposition \ref{non-degeneracy} describes the non-degeneracy property of the solution $\mathbf{u}$. Furthermore, Proposition \ref{pro2.3} tells us that if a solution $\mathbf{u}$ is a small perturbation of a half space solution $\mathbf{h}$, then the support of $\mathbf{u}$ is also a small perturbation of that of  $\mathbf{h}$.  In Section 3, based on Proposition \ref{non-degeneracy}, the Weiss-type monotonicity formula (Lemma \ref{monotonicity}) will be introduced, and it will play a crucial role in our analysis for the regularity of free boundary. Due to Lemma \ref{monotonicity}, one can deduce Lemma \ref{property}, which shows that all blow-up limits have to be 2-homogeneous functions.  Furthermore, Proposition \ref{growth} has been established quadratic growth estimate for $\mathbf{u}$, which will be well-applied in the proof of the  regularity of free boundary. In section 4, we mainly study the properties of the 2-homogeneous global solutions. Proposition \ref{prop4.1} shows that when the support of 2-homogeneous global solutions lies in a small perturbation of half space, then it belongs to $\mathbb{H}$ defined in \eqref{eq1.4}. Based on Proposition \ref{pro2.3} and Proposition \ref{prop4.1}, one can establish Corollary \ref{isolated}, which gives that the half space solutions are in $L^1(B_1(0); \mathbb{R}^m)$-topology isolated with the class of 2-homogeneous global solutions. Thanks to Proposition \ref{prop4.4} and Corollary \ref{coro4.5}, one may know how to distinguish half space solutions from homogeneous solutions by calculating the value of the energy density $M(\mathbf{u})$. Section 5 is devoted to show the epiperimetric inequality (Lemma \ref{epiperimetric}), which implies an energy decay estimate and uniqueness of blow-up limit (Proposition \ref{uniqueness}). These results provide the basis for our further analysis of the regularity of free boundary. Section 7 is to verify the assumption of the energy decay estimate (Proposition \ref{uniqueness}) uniformly in an open neighborhood of a regular free boundary point (Lemma \ref{regularity1}), then using the key energy decay estimate (Lemma \ref{regularity1}) we prove Theorem \ref{regularity} via a standard iteration process as G. S. Weiss in \cite{asu}.

	\subsection{Notations}\
	
	For convenience, we list some notations that will be used in our paper.
	
	$\bullet$  $B_r^+(x^0):=\{x\in B_r(x^0): x_n>x^0_n\}$ and $\mathbf{e}^i$ is denoted the $i-$th unit vector in $\mathbb{R}^m$;

	$\bullet$ for any set $A\subset \mathbb{R}^n$,  we denote $A^0$ and  $|A|$ to be the interior of $A$ and the $n-$dimensional Lebesgue measure of $A$ when $A$ is Lebesgue measurable, respectively;

	$\bullet$ denote $\nu$ to be the topological outward normal of the boundary of a given set, and $\nabla_{\theta}f:=\nabla f- (\nabla f\cdot \nu)\nu  $ to be the surface derivative of a given function $f$;

	$\bullet$  for any $\mathbf{v}=(v^1, \cdots, v^m)$, $\mathbf{w}=(w^1, \cdots, w^m)$: $\mathbb{R}^n\rightarrow \mathbb{R}^m$, and $\xi\in\mathbb{R}^n$, we denote $\nabla \mathbf{v}=[\partial_i v^j]_{1\leq i\leq n, 1\leq j\leq m}$, $|\nabla \mathbf{v}|^2=\sum_{i=1}^m|\nabla v^i|^2$, $\nabla \mathbf{v}\cdot \xi=(\nabla v^1\cdot \xi,\cdots, \nabla v^m\cdot \xi)$.

	$\bullet$ $o(t)$ represents the higher order infinitesimal than $t$, i.e., $\displaystyle\lim_{t\to 0}\displaystyle\frac{o(t)}{t}=0$.

	\vspace{25pt}

	\section{$L^\infty$ estimates and Non-degeneracy }
	
	In this section, our goal is to show the $L^\infty$ estimates and non-degeneracy of solution to \eqref{eq1.1},  which will be heavily used  in following sections.
	
	\begin{proposition}{($L^\infty$ estimates)}\label{bounded}
		Let $\mathbf{u}$ be a solution of the system \eqref{eq1.1} in $B_1(0)$. Then,
		\begin{equation}\label{eq2.0}
			\underset{B_{3/4}}{\sup} |\mathbf{u}|+\underset{B_{3/4}}{\sup} |\nabla\mathbf{u}|\leq C \left(\|\mathbf{u}\|_{L^1(B_1;\mathbb{R}^m)}+1\right),
		\end{equation}where the constant $C$ depends only on $n, m, c_0 $ and $C_0$.
	\end{proposition}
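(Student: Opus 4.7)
The plan is to decouple the two bounds: control $\sup|\mathbf{u}|$ by subharmonicity of $|\mathbf{u}|$, and then bootstrap to the gradient bound via standard interior elliptic estimates applied componentwise, using that the right-hand side of \eqref{eq1.1} is bounded in $L^\infty$.

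First I would observe that, by assumption \eqref{eq1.3}, $0\le f(|\mathbf{u}|)\le C_0$, so each component of the right-hand side of \eqref{eq1.1} is pointwise bounded by $C_0$. Hence each scalar component $u^i$ satisfies $\Delta u^i = g^i$ with $\|g^i\|_{L^\infty(B_1)}\le C_0$.

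Next I would show that $|\mathbf{u}|$ is subharmonic in $B_1$. To avoid differentiating at zeros, introduce the regularization $v_\varepsilon:=\sqrt{|\mathbf{u}|^2+\varepsilon^2}$ for $\varepsilon>0$. A direct computation yields
\begin{equation*}
\Delta v_\varepsilon \;=\; \frac{|\nabla\mathbf{u}|^2+\mathbf{u}\cdot\Delta\mathbf{u}}{v_\varepsilon}-\frac{|\mathbf{u}\cdot\nabla\mathbf{u}|^2}{v_\varepsilon^{3}}\;=\;\frac{\varepsilon^2|\nabla\mathbf{u}|^2}{v_\varepsilon^{3}}+\frac{|\nabla\mathbf{u}|^2 v_\varepsilon^2-|\mathbf{u}\cdot\nabla\mathbf{u}|^2}{v_\varepsilon^{3}}+\frac{f(|\mathbf{u}|)|\mathbf{u}|}{v_\varepsilon},
\end{equation*}
and the Cauchy--Schwarz inequality $|\mathbf{u}\cdot\nabla\mathbf{u}|^2\le|\mathbf{u}|^2|\nabla\mathbf{u}|^2\le v_\varepsilon^2|\nabla\mathbf{u}|^2$ together with $f\ge 0$ gives $\Delta v_\varepsilon\ge 0$. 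Letting $\varepsilon\to 0^+$ preserves subharmonicity in the distributional sense, so $\Delta|\mathbf{u}|\ge 0$ in $\mathcal{D}'(B_1)$.

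Third, invoking the $L^1$ mean value inequality for nonnegative subharmonic functions on, say, $B_{7/8}$, I obtain
\begin{equation*}
\sup_{B_{7/8}}|\mathbf{u}|\;\le\; C(n)\,\|\mathbf{u}\|_{L^1(B_1;\mathbb{R}^m)}.
\end{equation*}
Finally, to bound $|\nabla\mathbf{u}|$, apply interior $W^{2,p}$ estimates (or equivalently $C^{1,\alpha}$ Schauder) to each component equation $\Delta u^i=g^i$ on the pair $B_{3/4}\subset B_{7/8}$: for any $p\in(n,\infty)$,
\begin{equation*}
\|\nabla u^i\|_{L^\infty(B_{3/4})}\le C\|u^i\|_{W^{2,p}(B_{3/4})}\le C\bigl(\|u^i\|_{L^\infty(B_{7/8})}+\|g^i\|_{L^\infty(B_{7/8})}\bigr)\le C\bigl(\|\mathbf{u}\|_{L^1(B_1;\mathbb{R}^m)}+1\bigr),
\end{equation*}
with $C=C(n,m,c_0,C_0)$. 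Summing over $i=1,\dots,m$ and combining with the $L^\infty$ bound on $|\mathbf{u}|$ yields \eqref{eq2.0}.

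The only delicate step is the distributional subharmonicity of $|\mathbf{u}|$ across the coincidence set $\{|\mathbf{u}|=0\}$; this is handled cleanly by the $\varepsilon$-regularization above, since the nonnegativity of $f$ and Kato's inequality are exactly what make the sign work out. Everything else is standard linear elliptic theory applied componentwise.
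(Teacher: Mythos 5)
Your proposal is correct, and the overall structure (sup of $\mathbf{u}$ first, then elliptic regularity for the gradient) matches the paper, but the first step takes a genuinely different route. The paper bounds $\sup_{B_{3/4}}|\mathbf{u}|$ by treating each scalar component $u^i$ as a strong solution of $\Delta u^i = g^i$ with $\|g^i\|_{L^\infty}\le C_0$ and invoking the local maximum principle for strong subsolutions (GT Theorem 9.20) componentwise; this is where the ``$+1$'' already enters the $L^\infty$ bound of $\mathbf{u}$ itself. You instead exploit the vectorial structure: by the $\varepsilon$-regularization you verify the Kato-type inequality $\Delta|\mathbf{u}|\ge 0$ in $\mathcal{D}'(B_1)$ (the cooperative sign $f\ge 0$ is exactly what makes this work), and then the sub-mean-value inequality for nonnegative subharmonic functions gives the cleaner bound $\sup_{B_{7/8}}|\mathbf{u}|\le C(n)\|\mathbf{u}\|_{L^1(B_1;\mathbb{R}^m)}$ with no additive constant at this stage. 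Your route is slightly more elementary (only the classical sub-mean-value inequality rather than the quantitative local maximum principle for strong solutions) and more structurally tied to the system's cooperativity; the paper's route is more mechanical but generalizes directly to operators where the modulus is not obviously subharmonic. From there the two proofs coincide: interior $W^{2,p}$ estimates applied componentwise plus Sobolev/Morrey embedding (the paper phrases it via interpolation to $W^{1,p}\hookrightarrow C^0$, you phrase it via $W^{2,p}\hookrightarrow C^{1}$; same thing).

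One small algebraic slip you should fix: in your displayed expansion of $\Delta v_\varepsilon$, the middle term should read $\displaystyle\frac{|\mathbf{u}|^2|\nabla\mathbf{u}|^2-|\mathbf{u}\cdot\nabla\mathbf{u}|^2}{v_\varepsilon^3}$, not $\displaystyle\frac{|\nabla\mathbf{u}|^2 v_\varepsilon^2-|\mathbf{u}\cdot\nabla\mathbf{u}|^2}{v_\varepsilon^3}$; as written the $\varepsilon^2|\nabla\mathbf{u}|^2/v_\varepsilon^3$ piece is double-counted. Your subsequent application of Cauchy--Schwarz and the conclusion $\Delta v_\varepsilon\ge 0$ are unaffected. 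You should also note explicitly (as the paper does via Proposition \ref{rem1.2}) that $\mathbf{u}\in W^{2,p}_{\mathrm{loc}}$ is being used so that the pointwise computation of $\Delta v_\varepsilon$ is legitimate.
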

	\begin{proof}The proof can be carried out by standard elliptic regularity theory. More precisely, for the semilinear elliptic system, since the right hand side of the system is bounded, then using the local maximum principle for strong solution (see Remark \ref{rem1.2} (i) and  \cite[Theorem 9.20]{gt}), one can get
		$$\underset{B_{3/4}}{\sup} |\mathbf{u}|\leq C \left(\|\mathbf{u}\|_{L^1(B_1;\mathbb{R}^m)}+1\right),$$
		with $C=C(n, m, C_0)$>0.
		
		As for the $L^\infty$ estimate for the gradient $|\nabla\mathbf{u}|$, one may use the standard $W^{2,p}$ estimates for strong solution (see (9.40) in the proof of \cite[Theorem 9. 11]{gt}) to establish, for large $p>n$,
		$$\|D^2  \mathbf{u}\|_{L^p(B_{3/4};\mathbb{R}^m)}\leq C( C_0+ \underset{B_{5/6}}{\sup} |\mathbf{u}| ) \leq C \left(\|\mathbf{u}\|_{L^1(B_1;\mathbb{R}^m)}+1\right),$$
		where $|D^2\mathbf{u}|^p:=\sum_{i=1}^{m}|D^2 u^i|^p$, which together with the interpolation inequality (see (9.39) in the proof of \cite[Theorem 9. 11]{gt}) implies $\nabla\mathbf{u}\in W^{1, p}(B_{3/4} ;\mathbb{R}^m)$ and
		$$\|\nabla  \mathbf{u}\|_{W^{1, p}(B_{3/4};\mathbb{R}^m)}\leq  C \left(\|\mathbf{u}\|_{L^1(B_1;\mathbb{R}^m)}+1\right).$$
		Finally, from Sobolev imbedding theorem, $W^{1, p}(B_{3/4};\mathbb{R}^m) \hookrightarrow C^0(B_{3/4};\mathbb{R}^m)$ for large $p>n$ (see (7.30) in \cite{gt}), it follows that
		$$\underset{B_{3/4}}{\sup} |\nabla\mathbf{u}|\leq C \left(\|\mathbf{u}\|_{L^1(B_1;\mathbb{R}^m)}+1\right).$$
	\end{proof}

	\begin{proposition}{(Non-degeneracy)}\label{non-degeneracy}
		Let $\mathbf{u}$ be a solution of \eqref{eq1.1} in D. If $x^0 \in \overline{\{|\mathbf{u}|>0\}}$ and $ B_r(x^0)\subset D$, then
		\begin{equation}\label{eq2.2}
			\underset{B_r(x^0)}{\sup} |\mathbf{u}|\geq \frac{f(0)}{2n}r^2.
		\end{equation}
	\end{proposition}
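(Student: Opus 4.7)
The strategy is the standard comparison-with-barrier approach from the obstacle-problem literature, with the barrier $\tfrac{f(0)}{2n}|x-x^0|^2$ chosen precisely so that its Laplacian matches the lower bound for the right-hand side of \eqref{eq1.1} at the origin of $|\mathbf{u}|$. The monotonicity of $f$ (which follows from $F''\geq 0$ in \eqref{eq1.3}) and the Kato-type inequality $|\nabla|\mathbf{u}||\leq|\nabla\mathbf{u}|$ will together give subharmonicity of the difference, and the maximum principle then forces the supremum on $\partial B_r(x^0)$ to dominate the barrier.

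First I would reduce to the case $x^0\in\{|\mathbf{u}|>0\}$: if $x^0$ lies only on the free boundary, I pick a sequence $x^k\in\{|\mathbf{u}|>0\}$ with $x^k\to x^0$, prove \eqref{eq2.2} with $x^0$ replaced by $x^k$ and $r$ by $r-|x^k-x^0|$, and pass to the limit using continuity of $\mathbf{u}$ and the inclusion $B_{r-|x^k-x^0|}(x^k)\subset B_r(x^0)$. So henceforth assume $|\mathbf{u}(x^0)|>0$.

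Next I compute $\Delta|\mathbf{u}|$ inside the open set $\Omega:=\{|\mathbf{u}|>0\}\cap B_r(x^0)$, where $\mathbf{u}$ satisfies $\Delta\mathbf{u}=f(|\mathbf{u}|)\mathbf{u}/|\mathbf{u}|$ classically (modulo $W^{2,p}$ regularity from Proposition \ref{rem1.2}). Starting from $\Delta(|\mathbf{u}|^2)=2|\nabla\mathbf{u}|^2+2\mathbf{u}\cdot\Delta\mathbf{u}=2|\nabla\mathbf{u}|^2+2f(|\mathbf{u}|)|\mathbf{u}|$ and $\Delta(|\mathbf{u}|^2)=2|\mathbf{u}|\Delta|\mathbf{u}|+2|\nabla|\mathbf{u}||^2$, I obtain
$$\Delta|\mathbf{u}|=\frac{|\nabla\mathbf{u}|^2-|\nabla|\mathbf{u}||^2}{|\mathbf{u}|}+f(|\mathbf{u}|)\geq f(|\mathbf{u}|)\geq f(0),$$
where the first inequality is the Kato-type bound $|\nabla|\mathbf{u}||^2\leq|\nabla\mathbf{u}|^2$ and the second is the monotonicity $f'=F''/2\geq 0$ from \eqref{eq1.3}. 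Define $w(x):=|\mathbf{u}(x)|-\tfrac{f(0)}{2n}|x-x^0|^2$; then $\Delta w\geq f(0)-f(0)=0$ in $\Omega$, so $w$ is subharmonic there.

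Finally I apply the maximum principle to $w$ on the (possibly disconnected) component $\Omega'$ of $\Omega$ containing $x^0$. Its boundary splits as $\partial\Omega'\subset(\partial B_r(x^0)\cap\overline{\{|\mathbf{u}|>0\}})\cup(\Gamma(\mathbf{u})\cap\overline{B_r(x^0)})$. On the free-boundary portion $|\mathbf{u}|=0$, hence $w\leq 0$, while at $x^0$ we have $w(x^0)=|\mathbf{u}(x^0)|>0$. Therefore the maximum of $w$ on $\overline{\Omega'}$ is positive and must be attained at some $y\in\partial B_r(x^0)$, giving $|\mathbf{u}(y)|\geq\tfrac{f(0)}{2n}r^2+w(x^0)\geq\tfrac{f(0)}{2n}r^2$, which is \eqref{eq2.2}. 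The only mild technical point—and the main place one must be careful—is the first reduction step: one must check that the constant in the inequality does not depend on how close $x^k$ is to $x^0$, so that the limit passage is clean; this is automatic here since the barrier coefficient $f(0)/(2n)$ is universal.
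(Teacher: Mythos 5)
Your proposal is correct and follows essentially the same strategy as the paper: compute $\Delta|\mathbf{u}|\geq f(0)$ in $\{|\mathbf{u}|>0\}$ via the Kato inequality and the monotonicity of $f$, compare $|\mathbf{u}|$ against the barrier $\tfrac{f(0)}{2n}|x-x^0|^2$ with the maximum principle on the connected component of $B_r(x^0)\cap\{|\mathbf{u}|>0\}$ containing $x^0$, and handle free-boundary points by continuity. The only cosmetic differences are that you argue directly (with $w$) where the paper argues by contradiction (with $v=w-|\mathbf{u}(x^0)|$), and your shrinking-radius treatment of the limiting step is if anything slightly cleaner than the paper's fixed-radius version.
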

	\begin{proof}
		Let $U(x):=|\mathbf{u}|$, then a direct computation gives that
		\begin{align}\label{2.2}
			\Delta U(x)&=-\frac{(\mathbf{u}\cdot\nabla\mathbf{u})^2}{|\mathbf{u}|^3}+\frac{|\nabla\mathbf{u}|^2}{|\mathbf{u}|}+\frac{\mathbf{u}\cdot \Delta \mathbf{u}}{|\mathbf{u}|}\nonumber\\
			&=\frac{A}{U}+{f( |\mathbf{u}|)}\ \  \text{in}\ \  \{ |\mathbf{u}|>0 \},
		\end{align}
		where $A= |\nabla\mathbf{u}|^2 -|\nabla U|^2\geq 0$. Define
		$$v(x):=U(x)-U(x^0)-\frac{f(0)}{2n}|x-x^0|^2.$$
		Then it infers that
		$$\Delta v(x)=\frac{A}{U}+f( |\mathbf{u}|)-f(0)\geq \frac{A}{U}\geq 0 \quad \text{in}\quad \{ |\mathbf{u}|>0 \}, $$
		namely, $v(x)$ is subharmonic in the connected component of $B_r(x^0)\cap \{|{\mathbf{u}}|>0\}$.

		Here we have two cases,
		
		$\bullet$ $x^0\in \{|{\mathbf{u}}|>0\}$. Assume the estimate \eqref{eq2.2} does not hold true, then \begin{align}\label{eq2.3}\underset{B_r(x^0)}{\sup} |\mathbf{u}|&< \frac{f(0)}{2n}r^2.\end{align}
		Thus this together with the definition of $v$, one may see
		$$v(x)<\frac{f(0)}{2n}r^2-U(x^0)-\frac{f(0)}{2n}r^2 < 0, \ \forall x\in \partial B_r(x^0)\cap \{|{\mathbf{u}}|>0\}.$$
		
		If $B_r(x^0)\cap \partial\{|{\mathbf{u}}|>0\}\neq\varnothing$,  we conclude that $v(x)=-U(x^0)-\frac{f(0)}{2n}|x-x^0|^2<0$, for all $x\in B_r(x^0)\cap \partial\{|{\mathbf{u}}|>0\}.$
		
		If $B_r(x^0)\cap \partial\{|{\mathbf{u}}|>0\}=\varnothing$,  then one only needs to consider $x\in \partial B_r(x^0)\cap \{|{\mathbf{u}}|>0\}$.
		
		Hence $v(x)<0$, for any $x\in \partial(B_r(x^0)\cap \{|{\mathbf{u}}|>0\})$. Since $\Delta v \geq 0$ in $B_r(x^0)\cap \{|{\mathbf{u}}|>0\}$, then the maximum principle (see chapter 2 in \cite{gt}) leads to the conclusion that
		$$v<0 \ \text{in}\ B_r(x^0)\cap \{|{\mathbf{u}}|>0\},$$
		which  contradicts with $v(x^0)=0$. Therefore this Proposition holds true when $x^0\in \{|{\mathbf{u}}|>0\}$.
		
		$\bullet$ $x^0\in \partial\{|{\mathbf{u}}|>0\}$.
		
		For this case, we may choose a sequence $\{x^k\} \subset \{|{\mathbf{u}}|>0\}$ such that $x^k \rightarrow x^0$ as $k\rightarrow \infty$. Then for any $k\geq 1$, it is well-known that
		$$\underset{B_r(x^k)}{\sup} |\mathbf{u}|\geq \frac{f(0)}{2n}r^2.$$ Thus one can get
		$$\underset{B_r(x^0)}{\sup} |\mathbf{u}|\geq \frac{f(0)}{2n}r^2,$$
		according to the continuity argument since $\mathbf{u}\in C^{1, \alpha}(D,\mathbb{R}^m)$ for any $\alpha\in (0, 1)$.
	\end{proof}

	From Proposition \ref{bounded} and \ref{non-degeneracy}, it infers
	\begin{proposition}\label{pro2.3}
		Let $\mathbf{u}$ be a solution of \eqref{eq1.1} in $B_{1}(0)$ such that $\|\mathbf{u}-\mathbf{h}\|_{L^1({B_1(0);\mathbb{R}^m}})\leq \epsilon<1,$ where $\mathbf{h}(x)=\frac{f(0)\text{max}(x_n,0)^2}{2}\mathbf{e}^1$. Then,
		$$\left(B_{\frac{1}{2}}(0)\cap supp \   \mathbf{u}\right) \subset \{x_n>-C\epsilon^{\frac{1}{2n+2}}\},$$
		where constant $C=C(n,m).$(See Fig.1)
		\begin{figure}[!h]
			\includegraphics[width=120mm]{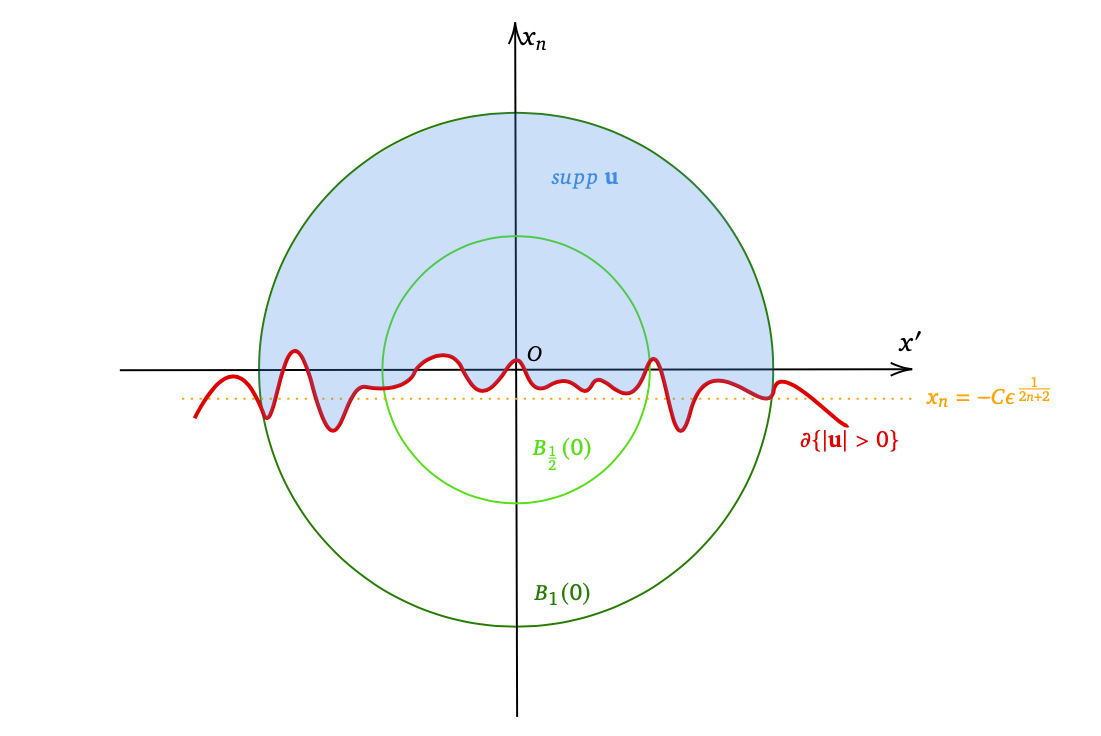}
			\caption{The support of $\mathbf{u} $}
		\end{figure}
		
	\end{proposition}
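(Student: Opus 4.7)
The plan is to combine the non-degeneracy from Proposition \ref{non-degeneracy} with the Lipschitz estimate from Proposition \ref{bounded} to show that $\mathrm{supp}\,\mathbf{u}$ cannot penetrate too far into $\{x_n<0\}$, where $\mathbf{h}\equiv 0$, without violating the $L^1$-closeness hypothesis. Let $x^0\in B_{1/2}(0)\cap\mathrm{supp}\,\mathbf{u}$ and set $\delta:=-x_n^0$; if $\delta\leq 0$ there is nothing to prove, so assume $\delta>0$. The objective is to bound $\delta\leq C(n,m)\,\epsilon^{1/(2n+2)}$.

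First, I apply Proposition \ref{non-degeneracy} at $x^0$ with radius $r=\delta/2$; this is admissible since $|x^0|+r<1$, and it produces a point $y\in\overline{B_r(x^0)}$ with
$$|\mathbf{u}(y)|\;\geq\;\frac{f(0)}{2n}\,r^2\;=\;\frac{f(0)}{8n}\,\delta^2.$$
Since $y_n\leq x_n^0+r=-\delta/2<0$, the half-plane solution $\mathbf{h}$ vanishes at $y$. Next I turn this pointwise lower bound into a lower bound on a small ball by means of Proposition \ref{bounded}: the $L^1$-hypothesis combined with the explicit formula for $\mathbf{h}$ gives $\|\mathbf{u}\|_{L^1(B_1;\mathbb{R}^m)}\leq \|\mathbf{h}\|_{L^1}+\epsilon\leq C(n)$, so $|\nabla\mathbf{u}|\leq C'=C'(n,m)$ on $B_{3/4}(0)$. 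Setting $\rho:=\frac{f(0)\delta^2}{16nC'}$ (which is quadratic in $\delta$), the Lipschitz estimate yields
$$|\mathbf{u}(z)|\;\geq\;|\mathbf{u}(y)|-C'|z-y|\;\geq\;\frac{f(0)\delta^2}{16n}\qquad\text{for every }z\in B_\rho(y).$$
For $\delta$ sufficiently small one has $\rho\leq \delta/4$, so $B_\rho(y)\subset B_{3/4}(0)\cap\{x_n<0\}$, and in particular $\mathbf{h}\equiv 0$ on $B_\rho(y)$.

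The conclusion follows by integration:
$$\epsilon\;\geq\;\int_{B_\rho(y)}|\mathbf{u}-\mathbf{h}|\,dx\;=\;\int_{B_\rho(y)}|\mathbf{u}|\,dx\;\geq\;\frac{f(0)\,|B_1|}{16n}\,\delta^2\,\rho^n\;=\;c(n,m)\,\delta^{2n+2},$$
which rearranges to $\delta\leq C(n,m)\,\epsilon^{1/(2n+2)}$. Values of $\delta$ that are too large to justify the geometric inclusions above are harmless, since $\delta\leq 1/2$ always (as $x^0\in B_{1/2}$), so such cases are absorbed by enlarging $C$. The only real technical point is the bookkeeping that ensures $B_\rho(y)\subset B_{3/4}(0)\cap\{x_n<0\}$: it works because the clearance of $y$ from both $\partial B_{3/4}$ and the hyperplane $\{x_n=0\}$ is linear in $\delta$, while the radius $\rho$ of the ball is quadratic in $\delta$, so all inclusions are automatic for $\delta$ small; no other step is delicate, and the argument is essentially a direct combination of the two prior propositions.
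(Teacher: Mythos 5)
Your argument is correct and yields the stated exponent $\epsilon^{1/(2n+2)}$, but it takes a slightly different route than the one the paper indicates. The paper says to follow \cite[Proposition 2]{asu}, combining non-degeneracy with the weak Harnack inequality (GT Thm.~8.18); you instead replace the Harnack step with the interior gradient bound from Proposition~\ref{bounded}. Both strategies turn the pointwise lower bound $|\mathbf{u}(y)|\gtrsim\delta^2$ produced by non-degeneracy into a lower bound on a ball where $\mathbf{h}\equiv 0$, and then integrate against the $L^1$ hypothesis. The Lipschitz route gives the lower bound on a ball of radius $\rho\sim\delta^2$ (since you must sacrifice $C'\rho\lesssim\delta^2$ against the pointwise value), leading to $\epsilon\gtrsim\delta^2\rho^n\sim\delta^{2n+2}$ and hence the exponent $\tfrac{1}{2n+2}$ that the proposition states. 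A Harnack-based argument (applying GT Thm.~8.18 with $p=1$, which is admissible, to the almost-superharmonic function $\sup|\mathbf{u}|-|\mathbf{u}|$, or GT Thm.~8.17 to $|\mathbf{u}|$) would typically push the radius up to $\rho\sim\delta$ and yield the sharper exponent $\tfrac{1}{n+2}$; so while both give the result as written, the Lipschitz route is more elementary and happens to hit exactly the exponent claimed.

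Two small points of bookkeeping you should tighten. First, once you shrink to $\rho':=\min(\rho,\delta/4)$ the inclusion $B_{\rho'}(y)\subset\{x_n<0\}$ is automatic, and the ``large $\delta$'' case then gives $\epsilon\geq c\,\delta^{n+2}\geq c\,\delta_0^{n+2}$, a constant; this is what lets you absorb it into the final $C$. Make this substitution explicit rather than waving at ``sufficiently small $\delta$''. Second, your constant inevitably depends on $c_0$ and $C_0$ (through $f(0)$ and the gradient bound $C'$ of Proposition~\ref{bounded}), not merely on $n$ and $m$ as the proposition asserts; this is a harmless imprecision already present in the paper's statement, but worth flagging.
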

	
	We notice that $\mathbf{u}$ has similar non-degeneracy as the one in \cite{asu}, it follows that using the non-degeneracy (Proposition \ref{non-degeneracy}) and Harnack inequality (see \cite[Theorem 8.18]{gt}) as employed in \cite[Proposition 2 ]{asu} can be completed the proof of this proposition, and we omit it for simplicity.

	\begin{remark} About this proposition, we can understand that if a solution $\mathbf{u}$ is a small perturbation of a half space solution $\mathbf{h}$, then the support of  $\mathbf{u}$ is also a small perturbation of $\mathbf{h}$ in a small ball (See Fig.1).
		
	\end{remark}

	\vspace{25pt}
	
	\section{Weiss-type monotonicity formula}
	
	In this section, we will introduce a useful tool, the so-called Weiss-type monotonicity formula, which was introduced firstly in the ground breaking work \cite{w98} by G. S. Weiss to deal with the obstacle problem (see also \cite{w99,w01}).
	\begin{lemma} {(Weiss-type monotonicity formula) }\label{monotonicity}
		Let $\mathbf{u}$ be a solution of \eqref{eq1.1} in $B_{r_0}(x^0)$ and the definition of $W(\mathbf{u},x^0,r)$ is given in  Definition \ref{weissenergy}, then there holds
		\begin{eqnarray*}\frac{dW(\mathbf{u},x^0,r)}{dr} &=&\frac{2}{r^{n+2}} \int_{\partial B_r(x^0)}|\nabla\mathbf{u}\cdot\nu-\frac{2\mathbf{u}}{r}|^2d\mathcal{H}^{n-1}\\
			&&+ \ \ \frac{2}{r^{n+3}}\int_{B_r(x^0)}F'(|\mathbf{u}|)|\mathbf{u}|\chi_{\{|\mathbf{u}|>0\}}-F(|\mathbf{u}|)dx\\
			&\geq& 0.
		\end{eqnarray*}
	\end{lemma}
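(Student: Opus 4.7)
The plan is to follow the scaling strategy pioneered by Weiss \cite{w99}. After translating so that $x^0=0$, introduce the $2$-homogeneous rescaling $\mathbf{v}(y):=\mathbf{u}(ry)/r^2$ for $y\in B_1$, which satisfies $\Delta\mathbf{v}=f(r^2|\mathbf{v}|)\,\mathbf{v}/|\mathbf{v}|\,\chi_{\{|\mathbf{v}|>0\}}$ in $B_1$. The change of variables $x=ry$ rewrites
\[
W(\mathbf{u},0,r)=\int_{B_1}|\nabla\mathbf{v}|^2\,dy+\int_{B_1}\frac{F(r^2|\mathbf{v}|)}{r^2}\,dy-2\int_{\partial B_1}|\mathbf{v}|^2\,d\mathcal{H}^{n-1},
\]
and the basic infinitesimal identity $r\,\partial_r\mathbf{v}=y\cdot\nabla\mathbf{v}-2\mathbf{v}$, which on $\partial B_1$ reduces to $\partial_\nu\mathbf{v}-2\mathbf{v}$, is immediate from the definition of $\mathbf{v}$.

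I would then differentiate the three pieces of $W$ in $r$. The Dirichlet piece is handled by integration by parts, yielding the boundary term $2\int_{\partial B_1}\partial_\nu\mathbf{v}\cdot\partial_r\mathbf{v}$ together with the bulk term $-2\int_{B_1}\Delta\mathbf{v}\cdot\partial_r\mathbf{v}$. The potential piece is differentiated by the chain rule, using $\partial_r|\mathbf{v}|=\mathbf{v}\cdot\partial_r\mathbf{v}/|\mathbf{v}|$ on $\{|\mathbf{v}|>0\}$. The crucial observation is a cancellation: substituting the PDE into the Dirichlet bulk term gives $-2\int f(r^2|\mathbf{v}|)\,\partial_r|\mathbf{v}|\,\chi_{\{|\mathbf{v}|>0\}}\,dy$, which (since $F'=2f$) is exactly the negative of the $\partial_r|\mathbf{v}|$-contribution from the potential piece, so the two bulk terms annihilate one another. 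Combining what remains on the boundary with the term $-4\int_{\partial B_1}\mathbf{v}\cdot\partial_r\mathbf{v}$ coming from differentiating the third piece of $W$ produces $2\int_{\partial B_1}(\partial_\nu\mathbf{v}-2\mathbf{v})\cdot\partial_r\mathbf{v}$, and the infinitesimal identity collapses this surface contribution to $(2/r)\int_{\partial B_1}|\partial_\nu\mathbf{v}-2\mathbf{v}|^2\,d\mathcal{H}^{n-1}$. The surviving explicit $r$-dependence of the potential leaves $(2/r^3)\int_{B_1}\bigl(F'(r^2|\mathbf{v}|)\,r^2|\mathbf{v}|\,\chi_{\{|\mathbf{v}|>0\}}-F(r^2|\mathbf{v}|)\bigr)\,dy$. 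Undoing the rescaling $x=ry$ then yields precisely the identity asserted in the lemma.

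The non-negativity is then immediate: the surface integral is visibly a square, and by Remark~\ref{rem1.1} the inequality $F(s)\leq F'(s)s$ (which encodes the convexity of $F$ together with $F(0)=0$) makes the bulk integrand $F'(|\mathbf{u}|)|\mathbf{u}|\chi_{\{|\mathbf{u}|>0\}}-F(|\mathbf{u}|)$ pointwise non-negative, with both terms vanishing on $\{|\mathbf{u}|=0\}$. The main technical delicacy is to justify the key cancellation on the coincidence set $\{|\mathbf{u}|=0\}$, where $\mathbf{u}/|\mathbf{u}|$ is undefined; this is resolved by the $W^{2,p}$-regularity of $\mathbf{u}$ (Proposition~\ref{rem1.2}), which gives $\nabla\mathbf{u}=0$ a.e.\ on $\{|\mathbf{u}|=0\}$ and therefore $\partial_r\mathbf{v}=0$ a.e.\ on the corresponding rescaled set, so every bulk computation above is valid in the almost-everywhere sense needed.
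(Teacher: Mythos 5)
Your proposal is correct, but it proves the identity by a genuinely different route from the paper's. You rescale first, setting $\mathbf{v}(y,r)=\mathbf{u}(x^0+ry)/r^2$, differentiate $W(\mathbf{u},x^0,r)=H(\mathbf{v}(\cdot,r),r)$ directly in $r$, and observe that the Dirichlet bulk term and the $\partial_r|\mathbf{v}|$-part of the potential term annihilate one another once the PDE and the relation $F'=2f$ are used, while the boundary terms collapse through the kinematic identity $r\partial_r\mathbf{v}=\partial_\nu\mathbf{v}-2\mathbf{v}$ on $\partial B_1$. The paper instead differentiates $W$ in the physical variable and closes the computation with the domain-variation (Rellich--Pohozaev type) identity of Lemma~\ref{lem3.3}, evaluated with the radial vector field $\xi_\tau(x)=\eta_\tau(|x-x^0|)(x-x^0)$, before integrating the PDE by parts. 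Both methods cope with the fact that $F$ is not homogeneous: yours exposes the obstruction explicitly as the chain-rule term
\begin{equation*}
\frac{2}{r^3}\int_{B_1}\Bigl(F'(r^2|\mathbf{v}|)\,r^2|\mathbf{v}|\,\chi_{\{|\mathbf{v}|>0\}}-F(r^2|\mathbf{v}|)\Bigr)\,dy,
\end{equation*}
which is nonnegative by Remark~\ref{rem1.1}, whereas the paper lets this same quantity emerge from the mismatch between $\int F\,\mathrm{div}\,\xi$ in the variation identity and $\int\mathbf{u}\cdot\Delta\mathbf{u}$ after inserting the PDE. What your route buys is that it dispenses entirely with the auxiliary domain-variation lemma and keeps the cancellation mechanism transparent; what the paper's route buys is that it never requires differentiating the Sobolev quantity $|\mathbf{v}(y,\cdot)|$ in the parameter $r$, which in your proof rests on the (standard but worth stating) chain rule for Sobolev compositions and on the a.e.\ vanishing of $\nabla\mathbf{u}$ and $\partial_r|\mathbf{v}|$ on the coincidence set -- a point you rightly flag, though it in fact already follows from $\mathbf{u}\in W^{1,1}_{\mathrm{loc}}$ rather than requiring the full $W^{2,p}$ regularity.
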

	
	\begin{remark} \
		Since for the general function $F$, this term has no Scaling properties. Therefore, based on \cite{asu} and  \cite{w99}, inspired by G. S. Weiss \cite{w98}, we obtain the Weiss-type monotonicity formula using the following Lemma regarding domain variation of the functional $E(\mathbf{u})$.
	\end{remark}

	\begin{lemma}\label{lem3.3}
		Let $\mathbf{u}$ be the minimizer of $E(\mathbf{u})$ in $B_r(x^0)$. Then it infers
		\begin{equation}\label{eq3.1}
			\int_{B_r(x^0)}|\nabla\mathbf{u}|^2 \mathrm{div}\xi -2\nabla\mathbf{u}D\xi\cdot\nabla{\mathbf{u}}   + F(|\mathbf{u}(x)|)\mathrm{div} \xi dx=0, \ \ \forall \xi\in C^{\infty}_c \left(B_r(x^0);\mathbb{R}^n\right).
		\end{equation}
	\end{lemma}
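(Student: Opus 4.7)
The plan is a standard inner-variation (domain-variation) argument. Fix $\xi\in C^\infty_c(B_r(x^0);\mathbb{R}^n)$ and, for $t\in\mathbb{R}$ sufficiently small, consider the map $\Phi_t(x)=x+t\xi(x)$. Since $\xi$ has compact support, $\Phi_t$ is a diffeomorphism of $B_r(x^0)$ onto itself that equals the identity in a neighborhood of $\partial B_r(x^0)$. Define the competitor $\mathbf{u}_t(y):=\mathbf{u}(\Phi_t^{-1}(y))$; then $\mathbf{u}_t$ agrees with $\mathbf{u}$ on $\partial B_r(x^0)$, so by the minimality of $\mathbf{u}$ we have $E(\mathbf{u})\leq E(\mathbf{u}_t)$ for every small $t$, and therefore $\tfrac{d}{dt}E(\mathbf{u}_t)\big|_{t=0}=0$ once differentiability under the integral sign is justified.

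Next I would change variables $y=\Phi_t(x)$ in $E(\mathbf{u}_t)$. Using $\nabla_y\mathbf{u}_t(\Phi_t(x))=\nabla\mathbf{u}(x)(D\Phi_t(x))^{-1}$ and $|\mathbf{u}_t(\Phi_t(x))|=|\mathbf{u}(x)|$, we obtain
\begin{equation*}
E(\mathbf{u}_t)=\int_{B_r(x^0)}\Bigl(\bigl|\nabla\mathbf{u}(x)(D\Phi_t(x))^{-1}\bigr|^2+F(|\mathbf{u}(x)|)\Bigr)\det D\Phi_t(x)\,dx.
\end{equation*}
Since $D\Phi_t=I+tD\xi$, the elementary expansions
\begin{equation*}
\det D\Phi_t=1+t\,\operatorname{div}\xi+O(t^2),\qquad (D\Phi_t)^{-1}=I-tD\xi+O(t^2)
\end{equation*}
hold uniformly in $x$, with $O(t^2)$ controlled by $\|\xi\|_{C^1}$. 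A direct index calculation shows that the coefficient of $t$ in $|\nabla\mathbf{u}(D\Phi_t)^{-1}|^2$ is $-2\sum_{i,k}\bigl(\sum_j\partial_iu^j\partial_ku^j\bigr)\partial_k\xi^i=-2\,\nabla\mathbf{u}D\xi\cdot\nabla\mathbf{u}$ in the notation fixed in the preliminaries.

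Assembling these expansions and using the product rule for the $t$-derivative, the coefficient of $t$ in the integrand of $E(\mathbf{u}_t)$ equals
\begin{equation*}
|\nabla\mathbf{u}|^2\operatorname{div}\xi-2\,\nabla\mathbf{u}D\xi\cdot\nabla\mathbf{u}+F(|\mathbf{u}|)\operatorname{div}\xi.
\end{equation*}
Differentiation under the integral is legitimate because the integrands are polynomials in $t$ with coefficients dominated by $C(\xi)(|\nabla\mathbf{u}|^2+F(|\mathbf{u}|))\in L^1(B_r(x^0))$ (and $\mathbf{u}\in W^{2,p}$ by Proposition \ref{rem1.2} gives ample regularity). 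Setting $\tfrac{d}{dt}E(\mathbf{u}_t)\big|_{t=0}=0$ yields \eqref{eq3.1}. The only delicate point is the index bookkeeping identifying the quadratic gradient term with $\nabla\mathbf{u}D\xi\cdot\nabla\mathbf{u}$; no analytic obstacle arises because $\xi$ is compactly supported, so there are no boundary terms and the minimality can be used freely on both sides $t>0$ and $t<0$.
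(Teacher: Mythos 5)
Your proposal is correct and uses essentially the same inner-variation argument as the paper. The only cosmetic difference is the direction of the diffeomorphism: you define the competitor as $\mathbf{u}\circ\Phi_t^{-1}$ and change variables via $y=\Phi_t(x)$, which makes the Jacobian factor appear directly as $\det D\Phi_t$, whereas the paper uses the pullback $\mathbf{u}_t=\mathbf{u}\circ\Psi_t$ (equivalently, your construction with $t\mapsto -t$) and handles the reciprocal Jacobian; both yield the same stationarity identity, and your version is, if anything, a bit cleaner on the Jacobian bookkeeping.
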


	\begin{proof}
		Following the argument from Chapter 9 of \cite{bv}, we deduce the more general case of the functional $E(\mathbf{u})$. A direct computation gives that
		\begin{eqnarray*}
			0&=&E'(\mathbf{u})=\displaystyle\lim_{t\rightarrow 0}\frac{E(\mathbf{u}(x+t\xi(x)))-E(\mathbf{u})}{t}\nonumber\\
			&=&\int_{B_r(x^0)} \left(  |\nabla\mathbf{u}|^2 \mathrm{div}\xi -2\nabla\mathbf{u}D\xi\cdot\nabla{\mathbf{u}}  \right)dx + \displaystyle\lim_{t\rightarrow 0}\frac{1}{t}\left(\int_{B_r(x^0)} \left(F(|\mathbf{u}(x+t\xi(x))|)-F(|\mathbf{u}(x)|)\right)dx\right),\nonumber
		\end{eqnarray*}where the second term is equal to
		\begin{eqnarray*}
			&&\displaystyle\lim_{t\rightarrow 0}\frac{1}{t}\left(\int_{B_r(x^0)} F(|\mathbf{u}_t(x)|)dx-\int_{B_r(x^0)} F(|\mathbf{u}(x)|)dx\right)\nonumber\\
			&=&\displaystyle\lim_{t\rightarrow 0}\frac{1}{t}\left(\int_{B_r(x^0)} F(|\mathbf{u}_t(\Psi_t^{-1}(y))|)|\mathrm{det} D\Psi_t(y)|dy-\int_{B_r(x^0)} F(|\mathbf{u}(x)|)dx\right)\nonumber\\
			&=&\int_{B_r(x^0)} F(|\mathbf{u}(x)|)\mathrm{div} \xi dx.
		\end{eqnarray*}
		Here $\mathbf{u}_t(x)=\mathbf{u}(x+t\xi(x))$, $\Psi_t:=x+t\xi$. Therefore,
		\begin{eqnarray*}
			0 &=& \int_{B_r(x^0)}|\nabla\mathbf{u}|^2 \mathrm{div}\xi -2\nabla\mathbf{u}D\xi\cdot\nabla{\mathbf{u}}   + F(|\mathbf{u}(x)|)\mathrm{div} \xi dx.
		\end{eqnarray*}
	\end{proof}
	
	{{\it Proof of  Lemma \ref{monotonicity}.} } Using a direct computation, one can see
	\begin{eqnarray}\label{eq3.2}
		\frac{dW(\mathbf{u},x^0,r)}{dr}&=&\frac{d}{dr}\left(\frac{1}{r^{n+2}}\int_{B_r(x^0)}\left(|\nabla\mathbf{u}|^2+F(|\mathbf{u}|)\right)dx-\frac{2}{r^{n+3}}\int_{\partial B_r(x^0)}|\mathbf{u}|^2 d\mathcal{H}^{n-1}\right)\nonumber\\
		&=&-(n+2)\frac{1}{r^{n+3}}\int_{B_r(x^0)} \left(|\nabla\mathbf{u}|^2+F(|\mathbf{u}|)\right)dx+\frac{1}{r^{n+2}}\int_{\partial B_r(x^0)}|\nabla\mathbf{u}|^2+F(|\mathbf{u}|)d\mathcal{H}^{n-1}\nonumber\\
		&&+\frac{2(n+3)}{r^{n+4}}\int_{\partial B_r(x^0)}|\mathbf{u}|^2 d\mathcal{H}^{n-1}-\frac{2}{r^{n+3}}\int_{\partial B_r(x^0)}\frac{n-1}{r}|\mathbf{u}|^2d\mathcal{H}^{n-1}\nonumber\\
		&&-\frac{2}{r^{n+3}}\int_{\partial B_r(x^0)}2\mathbf{u}\nabla\mathbf{u}\cdot\nu d\mathcal{H}^{n-1}.
	\end{eqnarray}
	
	Now recalling Lemma \ref{lem3.3} and taking  $$\xi_{\tau}(x):=\eta_{\tau}(|x-x^0|)(x-x^0),$$ where $\tau$ is small and $\eta_{\tau}(t)=\max\left\{ 0, \min\{1,\frac{r-t}{\tau}\}\right\}$, then it infers
	\begin{equation*}
		D\xi(x) =(\partial_j\xi_{\tau}^i)(x)= \eta_\tau(|x-x^0|)\delta_{ij}+\eta_{\tau}'(|x-x^0|)\frac{x_j-x^0_{j}}{|x-x^0|}(x_i-x^0_{i})
	\end{equation*}
	and
	\begin{equation*}
		\mathrm{div}\xi(x)=(\partial_i\xi_{\tau}^i)(x)= n\eta_\tau(|x-x^0|)+\eta_{\tau}'(|x-x^0|)(x_i-x^0_{i}),
	\end{equation*}
	where $\xi_{\tau}^i(x):=\eta_{\tau}(|x-x^0|)(x_i-x_i^0)$, $\delta_{ij}=0$ as $i\neq j$; $\delta_{ij}=1$ as $i=j$ ($i,j=1,2,...,n$) and $x=(x_1,x_2,...,x_n)$, $x^0=(x^0_1,x^0_2,...,x^0_n)$. The above two equalities together with (\ref{eq3.1}), as $\tau\rightarrow 0$, implies that
	\begin{equation}\label{eq3.3}
		\begin{aligned}
			0=&n\int_{ B_r(x^0)}|\nabla\mathbf{u}|^2+F(|\mathbf{u}|)dx - r\int_{\partial B_r(x^0)}|\nabla\mathbf{u}|^2+F(|\mathbf{u}|)d\mathcal{H}^{n-1}\\
			&-2\int_{ B_r(x^0)}|\nabla\mathbf{u}|^2dx +2r\int_{\partial B_r(x^0)}(\nabla\mathbf{u}\cdot\nu)^2 d\mathcal{H}^{n-1}.
		\end{aligned}
	\end{equation}
	Combining  \eqref{eq3.2} and \eqref{eq3.3}, one gets
	\begin{equation}\nonumber
		\begin{aligned}
			\frac{dW(\mathbf{u},x^0,r)}{dr}=&-r^{-n-3}\Big(2\int_{B_r(x^0)}|\nabla\mathbf{u}|^2dx-2r\int_{\partial B_r(x^0)}(\nabla\mathbf{u}\cdot\nu)^2 d\mathcal{H}^{n-1}+2\int_{B_r(x^0)}|\nabla\mathbf{u}|^2+F(|\mathbf{u}|) dx\Big)\\
			&-8r^{-n-4}\int_{\partial B_r(x^0)}|\mathbf{u}|^2d\mathcal{H}^{n-1}+\frac{4}{r^{n+3}}\int_{\partial B_r (x^0)} \mathbf{u}\cdot\nabla\mathbf{u}\cdot\nu d \mathcal{H}^{n-1}\\
			=&-r^{-n-3} \int_{B_r(x^0)}4|\nabla\mathbf{u}|^2+2F(|\mathbf{u}|)dx\\
			&+\frac{2}{r^{n+2}}\Big(\int_{\partial B_r(x^0)}(\nabla\mathbf{u}\cdot\nu)^2 d\mathcal{H}^{n-1} +\int_{\partial B_r(x^0)}\frac{4|\mathbf{u}|^2}{r^2}d\mathcal{H}^{n-1}-\int_{\partial B_r(x^0) } 2\frac{\mathbf{u}}{r}\cdot\nabla\mathbf{u}\cdot\nu d \mathcal{H}^{n-1} \Big).\\
		\end{aligned}
	\end{equation}
	This, together with using integration by parts for \eqref{eq1.1},  infers
	\begin{eqnarray*}
		\frac{dW(\mathbf{u},x^0,r)}{dr}&=&\frac{2}{r^{n+2}}\int_{\partial B_r(x^0)}\left(\nabla\mathbf{u}\cdot\nu-\frac{2|\mathbf{u}|}{r}\right)^2 d\mathcal{H}^{n-1} \\ &&+\frac{2}{r^{n+3}}\int_{B_r(x^0)}F'(|\mathbf{u}|)|\mathbf{u}|\chi_{\{|\mathbf{u}|>0\}}-F(|\mathbf{u}|)dx\\
		&\geq&0,
	\end{eqnarray*}
	here we have used the convexity of $F$, for which $F(|\mathbf{u}|)\leq F'(|\mathbf{u}|)|\mathbf{u}|$.
	
	\qed

	Once Lemma \ref{monotonicity} is established, one may have
	
	\begin{lemma}\label{property} Let $\mathbf{u}$ be the minimizer of \eqref{eq1.0}. Then the following conclusions hold true,

		$\mathrm{(1)}$ The function $r\longmapsto W(\mathbf{u},x^0,r) $ has a right limit $W(\mathbf{u},x^0,0+)\in [-\infty, +\infty)$  as $r\rightarrow0+$ and it has also a limit $W(\mathbf{u},x^0,+\infty)\in (-\infty, +\infty]$ as $r\rightarrow+\infty$  when $D=\mathbb{R}^n$ .
		
		$\mathrm{(2)}$ Let $0<r_k\rightarrow 0$ be a sequence such that the blow-up sequence
		$${\mathbf{u}}_k(x)=\frac{\mathbf{u}(x^0+r_kx)}{r_k^2},$$
		converges weakly in $W^{1,2}_{loc}(\mathbb{R}^n, \ \mathbb{R}^m)$ to $\mathbf{u}_0$. Then ${\mathbf{u}}_0$ is a homogeneous function of degree 2.
		Moreover
		$$W(\mathbf{u},x^0,0+)=\int_{B_1(0)} f(0)|\mathbf{u}_0|  \ dx\geq0.$$
		And $W(\mathbf{u},x^0,0+)=0$ implies that $\mathbf{u}\equiv0$ in $B_{\delta}(x^0)$ for some $\delta>0$.
		
		$\mathrm{(3)}$ The function $x\longmapsto W(\mathbf{u},x,0+)$ is upper-semicontinuous.
		
	\end{lemma}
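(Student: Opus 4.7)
The plan is to treat the three items in order, leaning on the monotonicity formula (Lemma \ref{monotonicity}) as the common engine and handling the non-homogeneity of $F$ via the rescaled functional $H(\cdot,r)$ introduced after Definition \ref{weissenergy}.

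For (1), since Lemma \ref{monotonicity} tells us $r\mapsto W(\mathbf{u},x^0,r)$ is non-decreasing on $(0,r_0)$, elementary monotone-function theory gives the one-sided limits $W(\mathbf{u},x^0,0+)=\inf_{r>0}W(\mathbf{u},x^0,r)\in[-\infty,+\infty)$ and, in the case $D=\mathbb{R}^n$, $W(\mathbf{u},x^0,+\infty)=\sup_{r>0}W(\mathbf{u},x^0,r)\in(-\infty,+\infty]$. No further work is needed here.

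For (2), the key is to change variables $x=x^0+r_ky$ to rewrite, for any $\rho>0$,
\begin{equation*}
W(\mathbf{u},x^0,r_k\rho)=\frac{1}{\rho^{n+2}}\int_{B_\rho}|\nabla\mathbf{u}_k|^2+\frac{F(r_k^2|\mathbf{u}_k|)}{r_k^2}\,dy-\frac{2}{\rho^{n+3}}\int_{\partial B_\rho}|\mathbf{u}_k|^2\,d\mathcal{H}^{n-1}.
\end{equation*}
Because $F\in C^{2,\alpha}$ with $F(0)=0$ and $F'(0)=2f(0)$, the middle integrand converges to $2f(0)|\mathbf{u}_0|$, and the $L^\infty$/$W^{2,p}$ regularity from Proposition \ref{bounded} upgrades the weak $W^{1,2}_{loc}$ convergence to strong $L^2_{loc}$ (and uniform) convergence of $\mathbf{u}_k$ together with strong $L^2_{loc}$ convergence of $\nabla\mathbf{u}_k$; thus passing $k\to\infty$ for each fixed $\rho$ yields
\begin{equation*}
W(\mathbf{u},x^0,0+)=\frac{1}{\rho^{n+2}}\int_{B_\rho}\bigl(|\nabla\mathbf{u}_0|^2+2f(0)|\mathbf{u}_0|\bigr)dy-\frac{2}{\rho^{n+3}}\int_{\partial B_\rho}|\mathbf{u}_0|^2\,d\mathcal{H}^{n-1}.
\end{equation*}
Since the right side is independent of $\rho$, its derivative in $\rho$ vanishes. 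Repeating the calculation of Lemma \ref{monotonicity} for the limit (scale-invariant) problem $\Delta\mathbf{u}_0=f(0)\mathbf{u}_0/|\mathbf{u}_0|\chi_{\{|\mathbf{u}_0|>0\}}$, with $F$ replaced by the $1$-homogeneous term $2f(0)|\mathbf{v}|$ for which $F'(|\mathbf{v}|)|\mathbf{v}|-F(|\mathbf{v}|)\equiv 0$, the bulk term in the derivative drops out and we are left with
\begin{equation*}
0=\frac{2}{\rho^{n+2}}\int_{\partial B_\rho}\Bigl|\nabla\mathbf{u}_0\cdot\nu-\frac{2\mathbf{u}_0}{\rho}\Bigr|^2 d\mathcal{H}^{n-1}\quad\text{for every }\rho>0,
\end{equation*}
which is the Euler relation forcing $\mathbf{u}_0$ to be homogeneous of degree $2$. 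Finally, multiplying the limit equation by $\mathbf{u}_0$, integrating by parts over $B_1$, and using $\nabla\mathbf{u}_0\cdot\nu=2\mathbf{u}_0$ on $\partial B_1$, one obtains $\int_{B_1}|\nabla\mathbf{u}_0|^2=2\int_{\partial B_1}|\mathbf{u}_0|^2-\int_{B_1}f(0)|\mathbf{u}_0|$, and substituting this into the displayed formula collapses it to $W(\mathbf{u},x^0,0+)=\int_{B_1}f(0)|\mathbf{u}_0|\,dx\geq 0$. If this quantity vanishes then $\mathbf{u}_0\equiv 0$, but the non-degeneracy of Proposition \ref{non-degeneracy} applied to $\mathbf{u}_k$ forces $\sup_{B_1}|\mathbf{u}_k|\geq f(0)/(2n)$ whenever $x^0\in\overline{\{|\mathbf{u}|>0\}}$; by uniform convergence this contradicts $\mathbf{u}_0\equiv 0$, so $\mathbf{u}\equiv 0$ in some $B_\delta(x^0)$.

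For (3), monotonicity gives $W(\mathbf{u},x,0+)=\inf_{r>0}W(\mathbf{u},x,r)$, and for each fixed $r>0$ the map $x\mapsto W(\mathbf{u},x,r)$ is continuous (the integrands are continuous in $x$ via translation, using the $C^{1,\alpha}$ regularity of $\mathbf{u}$ from Proposition \ref{bounded}). An infimum of continuous functions is upper semicontinuous, proving (3).

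The main obstacle is the homogeneity step in (2): because $F$ itself is not homogeneous, the original Weiss functional lacks scaling invariance, and one might worry that the blow-up inherits this defect. The resolution, and the reason the argument still goes through, is that after rescaling the effective nonlinear term is $F(r_k^2|\mathbf{u}_k|)/r_k^2$, whose $r_k\to 0$ limit $2f(0)|\mathbf{u}_0|$ \emph{is} positively $1$-homogeneous; scale invariance is recovered in the limit problem, and that is precisely what is needed to run the classical Weiss argument forcing $2$-homogeneity of $\mathbf{u}_0$. The remaining care point is justifying strong enough convergence (for the $\partial B_\rho$ trace and for the $|\nabla\mathbf{u}_k|^2$ term), which is supplied by the uniform $W^{2,p}$ bounds from Proposition \ref{bounded}.
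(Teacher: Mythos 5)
Your proof is correct, but in part (2) you take a genuinely different route to establish the $2$-homogeneity of $\mathbf{u}_0$. The paper works at the level of the solution $\mathbf{u}$ itself: it integrates the Weiss derivative formula of Lemma~\ref{monotonicity} over $r\in[\rho r_k,\sigma r_k]$, rewrites the $|\nabla\mathbf{u}\cdot\nu - 2\mathbf{u}/r|^2$ contribution in terms of the rescaled functions $\mathbf{u}_k$, and observes that the whole quantity equals $r_k\bigl(W(\mathbf{u},x^0,\sigma r_k)-W(\mathbf{u},x^0,\rho r_k)\bigr)\to 0$; this forces the deviation of $\mathbf{u}_0$ from $2$-homogeneity on the annulus to vanish. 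You instead pass to the limit in $W(\mathbf{u},x^0,\rho r_k)$ first (using strong $W^{1,2}_{loc}$ convergence from uniform $W^{2,p}$ bounds on $\mathbf{u}_k$), obtain a $\rho$-independent expression, and then differentiate that expression in $\rho$ using the Weiss-derivative identity for the limit problem, where the bulk term drops out because $F_0(s):=2f(0)s$ satisfies $F_0'(s)s-F_0(s)\equiv 0$. The paper's route has the advantage that it never needs to re-derive a monotonicity (or domain variation) identity for the blow-up limit; your route requires the Pohozaev-type identity $\int_{B_\rho}|\nabla\mathbf{u}_0|^2\,\mathrm{div}\,\xi - 2\nabla\mathbf{u}_0 D\xi\cdot\nabla\mathbf{u}_0 + 2f(0)|\mathbf{u}_0|\,\mathrm{div}\,\xi\,dx=0$ for $\mathbf{u}_0$, which does hold (for example by integration by parts against $\nabla\mathbf{u}_0\cdot\xi$ using $\mathbf{u}_0\in W^{2,p}_{loc}$ and the limit equation, or by passing to the limit in the domain variation identities for $\mathbf{u}_k$), but you should say so explicitly rather than only gesturing at ``repeating the calculation of Lemma~\ref{monotonicity}.'' A second minor contrast: for the limit of the nonlinear term, the paper squeezes $\frac{1}{r^2}F(r^2|\mathbf{u}_r|)$ between $F'(0)|\mathbf{u}_r|$ and $F'(r^2|\mathbf{u}_r|)|\mathbf{u}_r|$ using only convexity of $F$, while you invoke the $C^{2,\alpha}$ Taylor expansion of $F$ at $0$; both are valid under the standing hypotheses, but the convexity squeeze is marginally more economical. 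Your argument for the final implication via non-degeneracy, and your proof of (3) as an infimum of continuous functions, match the paper's intent and are fine.
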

	
	\begin{proof}(1)It follows directly from the monotonicity of the Weiss's energy functional (Lemma 3.1).
		
		(2) Noticing the assumption of $\{\mathbf{u}_k\}$,  it infers that $\{\mathbf{u}_k\}$ is bounded in $W^{1,2}_{loc}(\mathbb{R}^n,\mathbb{R}^m)$, and the limit $W(\mathbf{u},x^0,0+)$ is finite. Applying Lemma 3.1, for any $0<\rho<\sigma<+\infty$ we get that
		\begin{eqnarray*}
			0&\leq&\int_{\rho}^{\sigma}\frac{1}{r^{n+2}}\int_{\partial B_r(0)}(\nabla{\mathbf{u}}_k\cdot \nu-\frac{2{\mathbf{u}}_k}{r})^2d\mathcal{H}^{n-1}dr\\
			&=&\int_{\rho}^{\sigma}\frac{1}{r^{n+2}}\int_{\partial B_r(0)}\big|\frac{\nabla \mathbf{u}(x^0+r_kx)}{r_k}\cdot\nu-2\frac{{\mathbf{u}}(x^0+r_kx)}{r_k^2r}\big|^2d_x\mathcal{H}^{n-1}dr\\
			&=&\int_{\rho}^{\sigma}\frac{1}{r^{n+4}}\int_{\partial B_{rr_k}(x^0)}\frac{1}{r_k^{n-1}}\frac{1}{r_k^4}\mid y\cdot\nabla {{\mathbf{u}}}-2{{\mathbf{u}}}\mid^2 d_y\mathcal{H}^{n-1}dr\\
			&\leq& \int_{\rho r_k}^{\sigma r_k}\frac{r_k}{\tau^{n+4}}\int_{\partial B_{\tau}(x^0)}\mid y\cdot\nabla {{\mathbf{u}}}-2{{\mathbf{u}}}\mid^2d_y\mathcal{H}^{n-1}d\tau\\
			&+&\frac{r_k}{\tau^{n+3}}\int_{B_{\tau}(x^0)}F'(|{{\mathbf{u}}}(y)|)|{{\mathbf{u}}}(y)|\chi_{\{|{{\mathbf{u}}}(y)|>0\}}-F(|{{\mathbf{u}}}(y)|)dy\\
			&=& r_k \big(W({{\mathbf{u}}},x^0,\sigma r_k)-(W({{\mathbf{u}}},x^0,\rho r_k)\big)\rightarrow 0, \quad \text{as} \quad k\rightarrow \infty,
		\end{eqnarray*}which shows
		$$\int_{\rho}^{\sigma}\frac{1}{r^{n+2}}\int_{\partial B_r(0)}\big|\nabla{{\mathbf{u}}}_k\cdot \nu-\frac{2{{\mathbf{u}}}_k}{r}\big|^2d\mathcal{H}^{n-1} dr \rightarrow 0,\quad \text{as} \quad k\rightarrow \infty.$$ Hence ${u}_0$ is a homogeneous function of degree 2.
		Moreover, the homogeneity of ${{\mathbf{u}}}_0$ gives that
		\begin{align}\label{eq3.4}
			\displaystyle\lim_{r\rightarrow 0+}W(\mathbf{u},x^0,r)&=\displaystyle\lim_{r\rightarrow 0+}\frac{1}{r^{n+2}}\int_{B_r(x^0)}|\nabla\mathbf{u}|^2+F(|\mathbf{u}|) dx -\frac{2}{r^{n+3}}\int_{\partial B_r(x^0)}|\mathbf{u}|^2 d\mathcal{H}^{n-1}\nonumber\\
			\nonumber\\
			&=\displaystyle\lim_{r\rightarrow 0+}\int_{B_1(0)}|\nabla\mathbf{u}_r(x)|^2+\frac{1}{r^2}F(r^2|\mathbf{u}_r(x)|)dx-2\int_{\partial B_1(0)}|\mathbf{u}_r(x)|^2d\mathcal{H}^{n-1}.  	
	\end{align}
		Furthermore, we have
		$$ \displaystyle\lim_{r\rightarrow 0+}\int_{B_1(0)}\frac{1}{r^2}F(r^2|\mathbf{u}_r|)dx \leq \displaystyle\lim_{r\rightarrow 0+}\int_{B_1(0)} \frac{1}{r^2}F'(r^2|\mathbf{u}_r|)\cdot r^2|\mathbf{u}_r|dx=\int_{B_1(0)}F'(0)|\mathbf{u}_0(x)|dx,$$
		where the convexity of $F$ has been applied. On the other hand, the convexity of $F$ (see Remark \ref{rem1.1}) also gives that
		\begin{eqnarray*}
			\displaystyle\lim_{r\rightarrow 0+}\int_{B_1(0)}\frac{1}{r^2}F(r^2|\mathbf{u}_r|)dx\geq\displaystyle\lim_{r\rightarrow 0}\int_{B_1(0)}F'(0)|\mathbf{u}_r|dx=\int_{B_1(0)}F'(0)|\mathbf{u}_0(x)|dx.
		\end{eqnarray*}
		Therefore, there holds
		$$ \displaystyle\lim_{r\rightarrow 0+}\int_{B_1(0)}\frac{1}{r^2}F(r^2|\mathbf{u}_r|)dx=\int_{B_1(0)}F'(0)|\mathbf{u}_0(x)|dx, $$ which together with \eqref{eq3.4}, shows
		\begin{equation*}
			\begin{aligned}
				&\displaystyle\lim_{r\rightarrow 0+}W(\mathbf{u},x^0,r)\\
				=&\int_{B_1(0)}|\nabla\mathbf{u}_0|^2+F'(0)|\mathbf{u}_0|dx-2\int_{\partial B_1(0)}|\mathbf{u}_0|^2d\mathcal{H}^{n-1}\\
				=&-\int_{B_1(0)}\Delta{\mathbf{u}_0}\cdot {\mathbf{u}_0}dx+\int_{\partial B_1(0)}\frac{\partial {\mathbf{u}_0}}{\partial\nu}\cdot{\mathbf{u}_0}d\mathcal{H}^{n-1}+\int_{B_1(0)}F'(0)|\mathbf{u}_0(x)|dx-2\int_{\partial B_1(0)}|\mathbf{u}_0|^2d\mathcal{H}^{n-1}\\
				=&\int_{B_1(0)}\left(F'(0)-\Delta{\mathbf{u}_0}\cdot\frac{{\mathbf{u}_0}}{|{\mathbf{u}_0}|}\right)|{\mathbf{u}_0}| dx +\int_{\partial B_1(0)}\left(\frac{\partial\mathbf{u}_0}{\partial\nu}-2\mathbf{u}_0\cdot {\mathbf{u}_0}\right)d\mathcal{H}^{n-1}\\
				=&\int_{B_1(0)}\frac{F'(0)}{2}|\mathbf{u}_0|dx\geq 0.
			\end{aligned}
		\end{equation*}
		Now, notice that the equation \eqref{eq1.1}, one can obtain
		\begin{equation*}
			\Delta \mathbf{u}_r=\frac{F'(r^2|\mathbf{u}_r|)}{2}\frac{\mathbf{u}_r}{|\mathbf{u}_r|}\chi_{\{|\mathbf{u}_r|>0\}},
		\end{equation*}
		as $r\to 0+$, which implies
		\begin{equation}\label{eq3.7}
			\Delta \mathbf{u}_0=\frac{F'(0)}{2}\frac{\mathbf{u}_0}{|\mathbf{u}_0|}\chi_{\{|\mathbf{u}_0|>0\}}= f(0)\frac{\mathbf{u}_0}{|\mathbf{u}_0|}\chi_{\{|\mathbf{u}_0|>0\}}.
		\end{equation}
		Therefore, when $W(\mathbf{u},x^0,0+)=0$, it is easy to obtain $\mathbf{u}\equiv 0$ in a small ball $B_{\delta}(x^0)$.
		
		(3) With the definition of upper-semicontinuity, utilizing the monotonicity property and the absolute continuity of Lebesgue integration, we can obtain upper-semicontinuity, for more details we refer to the proof of \cite[Lemma 2]{asu}.

	\end{proof}
	
	To the end of this section, we will give a quadratic growth estimate for $\mathbf{u}$, using the Weiss-type monotonicity formula.
	\begin{proposition}\label{growth}
		Any solution $\mathbf{u}$ to the system \eqref{eq1.1} in $B_1(0)$ satisfies
		\begin{eqnarray*}
			|\mathbf{u}(x)|\leq C dist^2 (x,\Gamma_0(\mathbf{u})) \quad \text{and} \quad |\nabla \mathbf{u}(x)|\leq C dist (x,\Gamma_0(\mathbf{u})) \quad \text{for every} \quad x\in B_{\frac{1}{2}}(0),
		\end{eqnarray*}
		where the constant $C>0$ depends only on $n$, $c_0$ , $C_0$  ( $c_0$ and $C_0$ are  given in \eqref{eq1.3}) and $\displaystyle E(\mathbf{u},0,1).$
		
	\end{proposition}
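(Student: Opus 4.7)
The proof has two parts: (a) the quadratic decay $|\mathbf{u}(x)| \leq C \operatorname{dist}^2(x, \Gamma_0(\mathbf{u}))$, which is the substantive content, and (b) the linear decay of $|\nabla\mathbf{u}|$, which follows from (a) by an interior gradient estimate. For (a), my plan is to combine the Weiss-type monotonicity of Lemma \ref{monotonicity} with a blow-up argument. The first ingredient is a uniform Weiss bound: for any $x^0 \in \Gamma_0(\mathbf{u}) \cap \overline{B_{1/2}}$ and any $r \in (0, 1/4]$, since $B_{1/4}(x^0) \subset B_1(0)$ and the spherical term in $W$ has a minus sign, Lemma \ref{monotonicity} combined with $F \geq 0$ yields
$$ 0 \;\leq\; W(\mathbf{u}, x^0, 0+) \;\leq\; W(\mathbf{u}, x^0, r) \;\leq\; W(\mathbf{u}, x^0, 1/4) \;\leq\; 4^{n+2}\, E(\mathbf{u}, 0, 1) \;=:\; \Lambda, $$
where the lower bound uses Lemma \ref{property}(2). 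This uniform Weiss bound is the central input.

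Next I argue by contradiction. If (a) fails, pick $x_k \in B_{1/2}$ with closest $y_k \in \Gamma_0$, set $d_k := |x_k - y_k|$, and assume $N_k := |\mathbf{u}(x_k)|/d_k^2 \to \infty$. Proposition \ref{bounded} forces $d_k \to 0$. Rescale
$$\mathbf{v}_k(z) := \mathbf{u}(y_k + d_k z)/d_k^2,$$
which solves $\Delta \mathbf{v}_k = f(d_k^2 |\mathbf{v}_k|)\, \mathbf{v}_k/|\mathbf{v}_k|\, \chi_{\{|\mathbf{v}_k|>0\}}$ with $\|\Delta \mathbf{v}_k\|_{L^\infty} \leq C_0$, inherits the Weiss bound $W(\mathbf{v}_k, 0, R) = W(\mathbf{u}, y_k, R d_k) \leq \Lambda$ for every fixed $R > 0$ and $k$ large, and satisfies $|\mathbf{v}_k(\xi_k)| \geq N_k \to \infty$ at $\xi_k := (x_k - y_k)/d_k \in \partial B_1$. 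The goal is to secure a uniform $L^\infty_{\mathrm{loc}}$ bound on $\{\mathbf{v}_k\}$, which would directly contradict $|\mathbf{v}_k(\xi_k)| \to \infty$. Given $\|\Delta \mathbf{v}_k\|_{L^\infty} \leq C_0$, the $L^\infty$--$L^1$ estimate of Proposition \ref{bounded} reduces the $L^\infty_{\mathrm{loc}}$ bound to a uniform $L^1_{\mathrm{loc}}$ bound on $\mathbf{v}_k$, which I would extract from the Weiss bound, the coercivity $F(s) \geq 2 c_0 s$ (Remark \ref{rem1.1}), and the subharmonicity of $|\mathbf{v}_k|$ inherited from equation \eqref{2.2} together with $f \geq c_0 > 0$.

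The main obstacle is exactly this conversion of the integral Weiss bound into a pointwise $L^\infty_{\mathrm{loc}}$ bound: the Weiss inequality couples the Dirichlet and $F$-integrals $\int_{B_R}(|\nabla \mathbf{v}_k|^2 + F_k(|\mathbf{v}_k|))$ to the spherical $L^2$ quantity $\phi_k(R) := R^{-n-3} \int_{\partial B_R}|\mathbf{v}_k|^2$, so neither piece is automatically controlled on its own. My plan to close the loop is to track $\phi_k$ through the differential identity
$$\bigl(R^4 \phi_k(R)\bigr)' \;=\; \frac{2}{R^{n-1}} \int_{B_R}\bigl(|\nabla \mathbf{v}_k|^2 + f(d_k^2 |\mathbf{v}_k|)\, |\mathbf{v}_k|\, \chi_{\{|\mathbf{v}_k|>0\}}\bigr),$$
obtained by differentiating and integrating by parts in the PDE for $\mathbf{v}_k$, and then to combine it with the Weiss bound and the monotonicity of spherical averages of the subharmonic function $|\mathbf{v}_k|^2$ in order to propagate the original $L^\infty$ bound on $\mathbf{u}$ down through the rescaling into uniform bounds on $\phi_k(R)$ and $\|\mathbf{v}_k\|_{L^1(B_R)}$ for all the relevant scales $R d_k \leq 1/4$.

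Finally, for (b), once (a) is known on $B_{1/2}$, fix $x \in B_{1/2}$ with $d := \operatorname{dist}(x, \Gamma_0) > 0$. Every $x' \in B_{d/2}(x)$ satisfies $\operatorname{dist}(x', \Gamma_0) \leq 3 d/2$, so $|\mathbf{u}(x')| \leq C d^2$ throughout $B_{d/2}(x)$. The interior gradient estimate (e.g.\ \cite[Theorem 9.11]{gt}), applied to $\mathbf{u}$ in $B_{d/2}(x)$ with $\|\Delta \mathbf{u}\|_{L^\infty} \leq C_0$, yields
$$|\nabla \mathbf{u}(x)| \;\leq\; C \Bigl(\tfrac{1}{d} \sup_{B_{d/2}(x)} |\mathbf{u}| + d\, C_0\Bigr) \;\leq\; C\, d,$$
as required.
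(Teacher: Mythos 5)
The skeleton of your argument matches the paper's — reduce to an averaged $L^1$ bound on balls via Proposition \ref{bounded}, import a uniform Weiss bound $W(\mathbf{u},x^0,r)\le 4^{n+2}E(\mathbf{u},0,1)$ from Lemma \ref{monotonicity}, and recover the gradient estimate at the end by interior elliptic theory. The differential identity you write for $(R^4\phi_k)'$ is also correct. However, the step you yourself flag as ``the main obstacle'' contains a genuine gap, and it is precisely where the paper introduces an idea your sketch does not contain.

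Concretely: writing $\psi(R):=R^4\phi_k(R)=\int_{\partial B_1}|\mathbf{v}_k(R\theta)|^2\,d\theta$, the monotonicity you propose to exploit gives only $\psi$ nondecreasing (subharmonicity of $|\mathbf{v}_k|^2$) and $\psi/R^2$ nondecreasing (from $W\ge 0$ together with your identity and $f\ge c_0$), while the Weiss bound gives, at best via Cauchy--Schwarz on the derivative of $W$, a bound of the form $\bigl|\sqrt{\phi_k(R)}-\sqrt{\phi_k(R_*)}\bigr|\le C\sqrt{\log(R_*/R)}$ over $[R,R_*]$. Starting from $R_*\asymp 1/(4d_k)$ (where $\phi_k(R_*)$ is bounded in terms of $\sup_{B_1}|\mathbf{u}|$, as you note) and propagating down to a fixed $R$, each of these mechanisms produces a bound on $\phi_k(R)$ that diverges as $d_k\to 0$: the first two lose a factor $(R_*/R)^2$ or $(R_*/R)^4$, the Cauchy--Schwarz estimate loses $\sqrt{\log R_*}$. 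None closes the loop. The obstruction is structural, not merely technical: a large $2$-homogeneous harmonic component of $\mathbf{v}_k$ inflates $\phi_k$ at every scale but contributes \emph{nothing} to the Dirichlet-minus-boundary part of $W$, so no amount of tracking $W$ and $\phi_k$ through your ODE can rule it out.

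The paper (following \cite[Theorem 2]{asu}) neutralizes exactly this harmonic-polynomial freedom. One subtracts from $\mathbf{u}_r:=\mathbf{u}(x^0+r\cdot)/r^2$ the element $\mathbf{p}_{x^0}$ of $\mathcal{H}$ (componentwise $2$-homogeneous harmonic polynomials) that minimizes $\|\mathbf{u}_r-\mathbf{p}\|_{L^2(\partial B_1)}$. Because $\int_{B_r}|\nabla(\mathbf{u}-S_{x^0}\mathbf{p})|^2-\tfrac{2}{r}\int_{\partial B_r}|\mathbf{u}-S_{x^0}\mathbf{p}|^2=\int_{B_r}|\nabla\mathbf{u}|^2-\tfrac{2}{r}\int_{\partial B_r}|\mathbf{u}|^2$ for any such $\mathbf{p}$ (the cross terms cancel upon integrating by parts since $S_{x^0}\mathbf{p}$ is harmonic and $2$-homogeneous), the $L^1$ estimate reduces to bounding $I(\mathbf{p}_{x^0},x^0,r)=\|\mathbf{u}_r-\mathbf{p}_{x^0}\|^2_{L^2(\partial B_1)}$. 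This quantity is controlled by a compactness argument: if it were unbounded along a sequence, normalize to get $\|\mathbf{w}_k\|_{L^2(\partial B_1)}=1$, extract an $L^2(\partial B_1)$-strong limit $\mathbf{w}_0$ (using $W^{1,2}$ bounds from Weiss and Rellich), and show $\mathbf{w}_0$ is a $2$-homogeneous harmonic field orthogonal to $\mathcal{H}$, hence $\mathbf{w}_0=0$ — contradicting the unit normalization. To repair your proof you would need to adopt this subtraction-and-compactness mechanism (or an equivalent device that kills the harmonic $\mathcal{H}$-component); the pure ODE/monotonicity route you sketch cannot reach the conclusion.
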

	
	\begin{proof}
		Equivalently it suffices to verify that there exists  a constant $C_1=C_1(n, c_0,C_0,E(\mathbf{u},0,1))>0$ with $c_0$  given in \eqref{eq1.3} such that
		\begin{equation}\label{eq3.7-1}\displaystyle\sup_{B_r(x^0)}|\mathbf{u}|\leq C_1 r^2\quad \text{and}\quad \displaystyle\sup_{B_r(x^0)}|\nabla\mathbf{u}|\leq C_1 r
		\end{equation}
		for every $x^0\in \Gamma_0(\mathbf{u})\cap B_{1/2}(0)$ and every $r\in (0,1/4]$.
		
		In the following one firstly gives the claim that there exists a $ C_2=C_2(n, c_0,C_0,E(\mathbf{u},0,1))>0$  such that
		\begin{equation}\label{eq3.8}
			\frac{1}{r^n}\int_{B_r(x^0)}|\mathbf{u}|dx \leq C_2 r^2
		\end{equation}
		for every $x^0\in \Gamma_0(\mathbf{u})\cap B_{1/2}(0)$ and every $r\in (0,1/2]$.
		
		This claim can imply \eqref{eq3.7-1} due to the Proposition \ref{bounded}. Now we turn to the proof of \eqref{eq3.8}. Firstly using the monotonicity formula Lemma \ref{monotonicity} and \eqref{eq1.3}, then  for any $ x^0\in B_{\frac{1}{2}}(0)\cap \Gamma_0(\mathbf{u})$ and $ r\leq{\frac{1}{2}},$ we obtain
		\begin{eqnarray*}
			W(\mathbf{u},x^0,r)&\leq& W(\mathbf{u},x^0,\frac{1}{2})\\
			&\leq&2^{n+2}\int_{B_{\frac{1}{2}}(x^0)}|\nabla \mathbf{u}|^2+F(|\mathbf{u}|)dx\\
			&\leq&2^{n+2}\int_{B_1(0)}|\nabla \mathbf{u}|^2+F(|\mathbf{u}|)dx\\
			&=& 2^{n+2}E(\mathbf{u},0,1).
		\end{eqnarray*}
		Then, due to the convexity of $F$, one can see
		\begin{equation*}
			\begin{aligned}
				\frac{c_0}{r^{n+2}}\int_{B_r(x^0)}|\mathbf{u}| dx \leq&\frac{1}{r^{n+2}} \int_{B_r(x^0)} F(|\mathbf{u}|) dx \\
				=&W(\mathbf{u},x^0,r)-\frac{1}{r^{n+2}} \int_{B_r(x^0)} |\nabla\mathbf{u}|^2 dx+\frac{2}{r^{n+3}} \int_{\partial B_r(x^0)}|\mathbf{u}|^2d\mathcal{H}^{n-1}\\
				\leq&2^{n+2 }{E(\mathbf{u},0,1)}+\frac{2}{r^{n+3}} \int_{\partial B_r(x^0)}|\mathbf{u}-S_{x^0}\mathbf{p})|^2 d\mathcal{H}^{n-1},
			\end{aligned}
		\end{equation*}
		for each $\mathbf{p}\in \mathcal{H}:=\{\mathbf{p}=(p_1,p_2,...,p_m)|  p_j \ \text{is a 2-homogeneous harmonic polynomial}, \forall 1\leq j\leq m\}$, $S_{x^0}\mathbf{P}(x):=\mathbf{P}(x-x^0)$.
		Next, we only need to prove the boundness of the terms on the right side of above inequality.
		Note that
		\begin{equation*}
			\begin{aligned}
				I(\mathbf{p},x_0,r):=&\frac{1}{r^{n+3}} \int_{\partial B_r(x^0)}|\mathbf{u}-S_{x^0}\mathbf{p})|^2 d\mathcal{H}^{n-1}\\
				=&\int_{\partial B_1(0)}\left|\displaystyle\frac{\mathbf{u}(x^0+rx)}{r^2}-\mathbf{p}(x)\right|^2 d\mathcal{H}^{n-1}.
			\end{aligned}
		\end{equation*}
		Let $\mathbf{p}_{x^0}$ be the minimizer of $I(\mathbf{p},x_0,r)$ in $\mathcal{H}$.
		On the one hand, suppose towards a contradiction that there is a sequence of solutions $\mathbf{u}_k$ and a sequence of points $x^k\in B_{\frac{1}{2}(0)}\cap \Gamma_0(\mathbf{u}_k)$ as well as $r_k\to 0$ such that $E(\mathbf{u},0,1)$ are uniformly bounded,
		$$M_k:=\int_{\partial B_1(0)}\left|\displaystyle\frac{\mathbf{u}_k(x^k+r_kx)}{r_k^2}-\mathbf{p}_{x^k}(x)\right|^2 d\mathcal{H}^{n-1}\to \infty.$$
		Let $$\mathbf{w}_k:=\Big(\displaystyle\frac{\mathbf{u}_k(x^k+r_kx)}{r_k^2}-\mathbf{p}_{x^k}(x)\Big)/M_k,$$
		it leads to $\|\mathbf{w}_k\|_{L^2(\partial B_1(0);\mathbb{R}^m)}=1$. On the other hand, from Rellich's theorem it follows that $\mathbf{w}_k$ converges strongly in $L^2(\partial B_1(0);\mathbb{R}^m)$, and due to $\mathbf{p}_{x^k}$ be the minimizer of $I(\mathbf{p},x_0,r)$ in $\mathcal{H}$ we can obtain $\int_{\partial B_1(0)}|\mathbf{w}_0|^2 d\mathcal{H}^{n-1}=0$, therefore it's contradictory. We will omit specific details of the proof by contradiction, the readers can refer to the proof of \cite[Theorem 2]{asu}.
	\end{proof}

	\vspace{25pt}
	
	\section{Homogeneous  global solution}
	In this section, the main purpose is to consider the characterization of the global solutions of system \eqref{eq1.1} and establish the relationship between the set of regular free boundary points $\mathcal{R}_{\mathbf{u}}$ and $\Gamma_0(\mathbf{u})$.
	
	Firstly, using the second part of Lemma \ref{property}, we have obtained that the blow-up limit $\mathbf{u}_0$ is homogeneous function of degree 2. Hence, one defines homogeneous global solutions of degree 2 as follows.
	\begin{definition}\label{global}
		$\mathbf{v}\in W^{1,2}(\mathbb{R}^n,\mathbb{R}^m)$ is called \textit{a homogeneous global solution of degree 2}, provided that it satisfies the following two conditions.
		
		(1) ({\it Homogeneity})
		$$ \mathbf{v}(\lambda x)=\lambda^2\mathbf{v}(x) \ \ \  \text{for all } \ \ \lambda>0 \ \ \text{and} \ \  x\in \mathbb{R}^n.$$
		
		(2) It is a weak solution to
		\begin{equation}\label{eq4.1}
			\Delta \mathbf{v}= f(0)\frac{\mathbf{v}}{|\mathbf{v}|}\chi_{\{|\mathbf{v}|>0\}}\quad \text{in}\quad \mathbb{R}^n.
		\end{equation}
	\end{definition}
	
	The following result gives that the solution has to be a half space solution, provided that its support lies above the plane $x_n=-\delta$ for some $\delta>0$ in $B_1(0)$.
	\begin{proposition}\label{prop4.1}
		Assume that $\mathbf{u} $ is a homogeneous global solution of degree 2. Then there exists a positive constant $\delta=\delta(n, \sup_{B_1(0)}{|\mathbf{u}|} )$ such that if $\left(B_1\cap \text{supp}\   \mathbf{u} \right) \subset \{x_n>-\delta\}$, then $\mathbf{u}\in\mathbb{H}$, i.e. $\mathbf{u}=\displaystyle\frac{f(0)\max (x\cdot\nu,0)^2}{2}\mathbf{e}^1$ for some unit vector $\nu\in \mathbb{R}^n$ and $\mathbf{e}\in \mathbb{R}^m$ (See Fig.2).
	\end{proposition}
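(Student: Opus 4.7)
The approach adapts the scheme of \cite[Proposition 3]{asu}, exploiting that a $2$-homogeneous global solution to $\Delta\mathbf{u}=f(0)\mathbf{u}/|\mathbf{u}|\chi_{\{|\mathbf{u}|>0\}}$ satisfies an equation of exactly the same shape as in the linear case, since $f(0)>0$ is merely a positive constant. First I translate the slab hypothesis onto the unit sphere: for any $\theta\in S^{n-1}\cap\operatorname{supp}\mathbf{u}$, $2$-homogeneity gives $\lambda\theta\in B_1\cap\operatorname{supp}\mathbf{u}$ for every $\lambda\in(0,1)$, and passing to the limit $\lambda\to 1^-$ in the inequality $\lambda\theta_n>-\delta$ yields $\theta_n\geq-\delta$. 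Thus $\operatorname{supp}\mathbf{u}\cap S^{n-1}\subset\{x_n\geq-\delta\}$, and by homogeneity $\operatorname{supp}\mathbf{u}$ is a closed cone whose trace on $S^{n-1}$ lies in this spherical cap-slab.

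The central step is to promote this to $\operatorname{supp}\mathbf{u}\subset\{x_n\geq 0\}$ once $\delta$ is chosen small in terms of $n$ and $M:=\sup_{B_1}|\mathbf{u}|$. I argue by contradiction: if some $y\in S^{n-1}\cap\operatorname{supp}\mathbf{u}$ satisfies $y_n<0$, then a nearby free boundary point $z$ with $z_n<0$ must exist. Proposition \ref{non-degeneracy} at $z$ provides the quantitative lower bound $\sup_{B_r(z)}|\mathbf{u}|\geq\frac{f(0)}{2n}r^2$ for all admissible $r$, while a Harnack chain inside a connected component of $\{|\mathbf{u}|>0\}$ (of the same type exploited in \cite[Proposition 2]{asu}) propagates this lower bound along the ray through $z$. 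Combined with $2$-homogeneity, this yields a point in $B_1$ at which $|\mathbf{u}|$ exceeds $M$, contradicting the a priori bound $\sup_{B_1}|\mathbf{u}|=M$ as soon as $\delta$ is sufficiently small; hence $\operatorname{supp}\mathbf{u}\subset\{x_n\geq 0\}$.

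With support in a genuine half-space, $\mathbf{u}$ is a $2$-homogeneous solution vanishing in $\{x_n\leq 0\}$, and the growth estimate of Proposition \ref{growth} forces $\nabla\mathbf{u}\equiv 0$ on $\{x_n=0\}$. For each component $u^j$, the tangential derivatives $\partial_i u^j$ ($i<n$) are $1$-homogeneous and carry zero Cauchy data on the hyperplane $\{x_n=0\}$; a unique continuation argument, equivalently an expansion of $\mathbf{u}|_{S^{n-1}}$ in spherical harmonics on the upper hemisphere where the imposed Dirichlet and Neumann data on the equator annihilate every mode except the purely radial one proportional to $\max(x_n,0)^2$, forces $\partial_i u^j\equiv 0$ for every $i<n$. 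Consequently $\mathbf{u}(x)=\mathbf{c}\,\max(x_n,0)^2$ for some $\mathbf{c}\in\mathbb{R}^m$, and substituting into the equation gives $2\mathbf{c}=f(0)\mathbf{c}/|\mathbf{c}|$, so $|\mathbf{c}|=f(0)/2$ and $\mathbf{u}=\frac{f(0)}{2}\max(x_n,0)^2\mathbf{e}\in\mathbb{H}$ with $\mathbf{e}=\mathbf{c}/|\mathbf{c}|$.

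The main obstacle is the second step: quantitatively upgrading the slab inclusion to a half-space inclusion. This is precisely where the non-degenerate assumption $f(0)>0$ is indispensable, as it supplies the sharp constant $f(0)/(2n)$ in Proposition \ref{non-degeneracy} that amplifies any dip of the support below $\{x_n=0\}$, via homogeneity and a Harnack chain, into a violation of the a priori $L^{\infty}$ bound; the same amplification would fail in the degenerate regime $f(0)=0$ studied in \cite{fsw}.
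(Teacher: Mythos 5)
Your proposal departs from the paper's argument, and the central step you introduce is in fact false. In step 2 you claim to ``promote'' the slab inclusion $\operatorname{supp}\mathbf{u}\subset\{x_n>-\delta\}$ to a genuine half-space inclusion $\operatorname{supp}\mathbf{u}\subset\{x_n\geq 0\}$. This cannot hold in the stated generality: a half-space solution $\frac{f(0)}{2}\max(x\cdot\nu,0)^2\,\mathbf{e}$ with $\nu$ at angle $\alpha$ from $\mathbf{e}^n$ has support $\{x\cdot\nu>0\}$, whose trace on $S^{n-1}$ dips down to $\theta_n=-\sin\alpha$; choosing $\alpha>0$ with $\sin\alpha<\delta$ gives a $2$-homogeneous global solution satisfying the hypothesis but violating your intermediate conclusion, even though it belongs to $\mathbb{H}$. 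In other words, the proposition's conclusion allows arbitrary $\nu$, so the support need not lie in the coordinate half-space, and no amount of tuning $\delta$ will fix this. The non-degeneracy/Harnack mechanism you invoke also would not produce a contradiction with the $L^\infty$ bound: by $2$-homogeneity the supremum over $B_r(z)$ is bounded by $(1+r)^2\sup_{B_1}|\mathbf{u}|$, and the non-degeneracy lower bound $\frac{f(0)}{2n}r^2$ grows at the same rate, so no contradiction forms as $r$ increases.

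Step 3 has a separate difficulty: you cannot expand $\mathbf{u}$ in spherical harmonics and decouple modes, because the system $\Delta\mathbf{u}=f(0)\mathbf{u}/|\mathbf{u}|\,\chi_{\{|\mathbf{u}|>0\}}$ is nonlinear through the factor $\mathbf{u}/|\mathbf{u}|$. The indispensable first move, which your proposal omits, is to show that $\mathbf{u}=\mathbf{a}\,|\mathbf{u}|$ in each connected component for a \emph{fixed} unit vector $\mathbf{a}\in\mathbb{R}^m$, thereby scalarizing the problem to $\Delta|\mathbf{u}|=f(0)$ in $\{|\mathbf{u}|>0\}$.

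The paper's actual proof works directly with the slab-shaped spherical cap. Writing the equation on $S^{n-1}$, each component $u^i$ solves $-\Delta' u^i + f(0)\,u^i/|\mathbf{u}|=2n\,u^i$ on every connected component $\Omega'\subset\partial B_1\cap\{|\mathbf{u}|>0\}$; the appendix lemma on the Laplace--Beltrami operator (Lemma \ref{laplace}) then shows, via an eigenvalue-comparison argument that only requires $\Omega'\subset\{x_n>-\delta(n,q_0)\}$, that the solution space of this spherical equation is one-dimensional and spanned by a positive function $f_{\Omega'}$. Since all the $u^i$ and $|\mathbf{u}|$ solve the same spherical equation, each $u^i$ is a constant multiple of $|\mathbf{u}|$, which gives $\mathbf{u}=\mathbf{a}|\mathbf{u}|$; from there the classification of scalar $2$-homogeneous solutions of $\Delta|\mathbf{u}|=f(0)\chi_{\{|\mathbf{u}|>0\}}$ with the given support condition yields a half-space solution. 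This spectral rigidity argument on the cap is precisely what replaces your attempted reduction to an exact half-space, and it is what makes the direction $\nu$ emerge from the proof rather than be imposed at the outset.
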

	
	\begin{figure}[!h]
		\includegraphics[width=100mm]{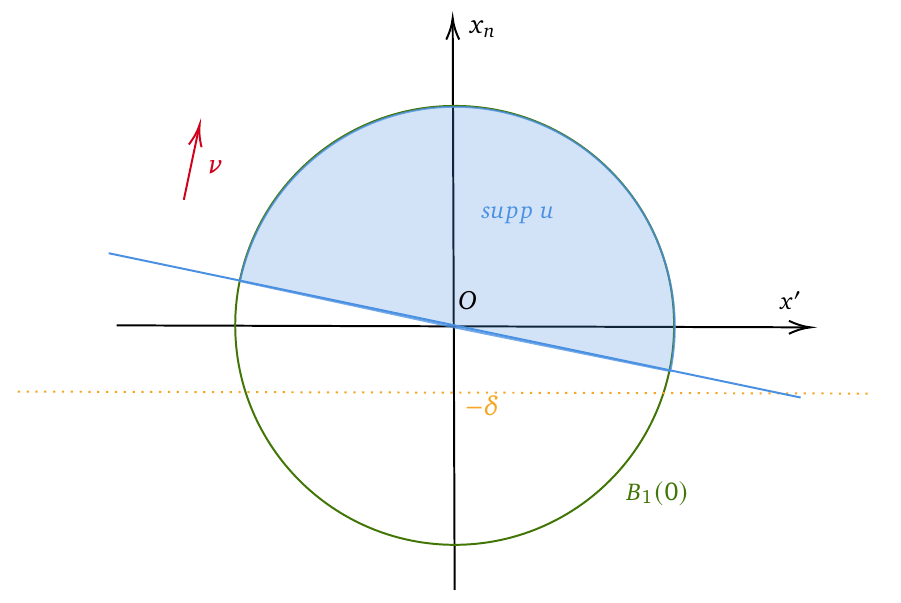}
		\caption{The support of $\mathbf{u} $}
	\end{figure}
	
	\begin{proof}
		The proof  is quite similar to the Proposition 3 in\cite{asu}. Firstly, we consider that each component $u^i$ of $\mathbf{u}$ is a solution of
		$$L_i(u^i):=-\Delta' u^i + f(0)\frac{u^i}{|\mathbf{u}|}=2nu^i$$
		in every connected component $\Omega'\subset \partial B_1\cap \{|\mathbf{u}|>0\}$, where $\Delta'$ is Laplace-Beltrami operator on the unit sphere in $\mathbb{R}^n$, which is inspired by the approach of \cite[Proposition 3]{asu}. Then, the Appendix gives that for each connected component $\Omega$ of $B_1\cap \{|\mathbf{u}|>0\}$ there exists a unit vector $\mathbf{a}=(a_1,a_2,...,a_m)$ such that $\mathbf{u}= \mathbf{a}|\mathbf{u}(x)|$ and $\Delta|\mathbf{u}|=f(0)$ in $\Omega$. As the proof similar to the \cite[Proposition 3]{asu}, we consider the two cases, on the one hand $|\nabla \mathbf{u}|=0$ on the $\partial \Omega$. On the other hand, $|\nabla \mathbf{u}|\neq 0$ on the $\partial \Omega$. The two cases yield that $\mathbf{u}\in \mathbb{H}$. The only difference between our case and \cite{asu} is the constant $f(0)$. Therefore, we omit the details here and interested readers can be referred to \cite{asu}.
	\end{proof}
	
	\begin{remark} Proposition \ref{prop4.1}, as the inverse of Proposition \ref{pro2.3}, tells us that if the support of a 2-homogeneous solution $\mathbf{u}$ is a small perturbation of the support of a half space solution $\mathbf{h}$, then this 2-homogeneous solution $\mathbf{u}$ is a half space solution $\mathbf{h}$.
	\end{remark}

	A direct corollary from Proposition \ref{pro2.3} and Proposition \ref{prop4.1} is as follows.
	\begin{corollary}\label{isolated} The half space solutions are (in the $L^1(B_1(0);\mathbb{R})^m$-topology) isolated with the class of homogeneous global solutions of degree 2.
	\end{corollary}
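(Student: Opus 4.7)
My plan is to prove this by directly chaining Proposition \ref{pro2.3} and Proposition \ref{prop4.1}, using the degree-$2$ homogeneity of $\mathbf{v}$ as the bridge between the two. By rotating coordinates in $\mathbb{R}^n$ and choosing an orthonormal basis of $\mathbb{R}^m$, it suffices to treat the canonical reference half-space solution $\mathbf{h}(x)=\frac{f(0)\max(x_n,0)^2}{2}\mathbf{e}^1$, and to show there exists $\epsilon_0>0$ such that every $2$-homogeneous global solution $\mathbf{v}$ in the sense of Definition \ref{global} satisfying $\|\mathbf{v}-\mathbf{h}\|_{L^1(B_1(0);\mathbb{R}^m)}\le \epsilon_0$ must belong to $\mathbb{H}$.

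Fix such a $\mathbf{v}$. Since $\mathbf{v}$ solves \eqref{eq4.1}, it is in particular a solution of \eqref{eq1.1} in $B_1(0)$, so Proposition \ref{pro2.3} applies and yields
\[
\bigl(B_{1/2}(0)\cap\mathrm{supp}\,\mathbf{v}\bigr)\subset \{x_n>-C\epsilon^{\frac{1}{2n+2}}\},
\]
with $C=C(n,m)$. Here homogeneity enters: because $\mathbf{v}(\lambda x)=\lambda^2\mathbf{v}(x)$ for all $\lambda>0$, the set $\mathrm{supp}\,\mathbf{v}$ is a cone through the origin, so for any $y\in B_1(0)\cap\mathrm{supp}\,\mathbf{v}$ the point $y/2$ lies in $B_{1/2}(0)\cap\mathrm{supp}\,\mathbf{v}$, giving $y_n/2>-C\epsilon^{1/(2n+2)}$. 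Hence
\[
\bigl(B_1(0)\cap\mathrm{supp}\,\mathbf{v}\bigr)\subset \{x_n>-2C\epsilon^{\frac{1}{2n+2}}\},
\]
which is the exact form of hypothesis needed by Proposition \ref{prop4.1}.

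To actually invoke Proposition \ref{prop4.1} I still need the perturbation parameter $2C\epsilon^{1/(2n+2)}$ to be smaller than the constant $\delta(n,\sup_{B_1(0)}|\mathbf{v}|)$ appearing there, so I must bound $\sup_{B_1(0)}|\mathbf{v}|$ uniformly in $\mathbf{v}$. The $L^\infty$ estimate Proposition \ref{bounded} gives $\sup_{B_{3/4}}|\mathbf{v}|\le C'(\|\mathbf{v}\|_{L^1(B_1;\mathbb{R}^m)}+1)\le C'(\|\mathbf{h}\|_{L^1(B_1;\mathbb{R}^m)}+\epsilon+1)$, and $2$-homogeneity extends this to $\sup_{B_1(0)}|\mathbf{v}|=(4/3)^2\sup_{B_{3/4}}|\mathbf{v}|$, which is a constant depending only on $n,m,c_0,C_0$ and $\mathbf{h}$. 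With this uniform bound in hand, $\delta$ is bounded below by a positive constant independent of $\mathbf{v}$, so it suffices to choose $\epsilon_0>0$ small enough that $2C\epsilon_0^{1/(2n+2)}<\delta$. Then Proposition \ref{prop4.1} forces $\mathbf{v}\in\mathbb{H}$, completing the proof.

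The argument is essentially a routine combination of the two propositions already proved, so I do not expect a real obstacle. The one point that needs care is the passage from the support bound in $B_{1/2}(0)$ produced by Proposition \ref{pro2.3} to the support bound in $B_1(0)$ demanded by Proposition \ref{prop4.1}; this is precisely where the cone structure afforded by $2$-homogeneity is essential, at the harmless cost of a factor of two in the tolerance. The only bookkeeping subtlety is to ensure that the constant $\delta$ from Proposition \ref{prop4.1} does not degenerate along an $L^1$-approximating sequence, which is handled by the uniform $L^\infty$-bound sketched above.
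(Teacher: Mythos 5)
Your proposal is correct and follows the same route the paper takes: chain Proposition \ref{pro2.3} into Proposition \ref{prop4.1}. The paper's own proof is a one-liner that simply cites the two propositions, while you spell out two points the paper elides: (i) Proposition \ref{pro2.3} only constrains $\mathrm{supp}\,\mathbf{v}$ inside $B_{1/2}$, and you correctly use the cone structure coming from $2$-homogeneity to upgrade this to the $B_1$ constraint that Proposition \ref{prop4.1} actually requires; and (ii) the threshold $\delta(n,\sup_{B_1}|\mathbf{v}|)$ in Proposition \ref{prop4.1} must be bounded below uniformly over the approximating class, which you secure via the $L^\infty$ estimate of Proposition \ref{bounded} together with homogeneity and the a priori $L^1$-closeness to $\mathbf{h}$. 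One small point worth being explicit about: a $2$-homogeneous global solution solves \eqref{eq4.1} (constant $f(0)$), not \eqref{eq1.1} with the original $F$; the invocation of Proposition \ref{pro2.3} is legitimate because \eqref{eq4.1} is \eqref{eq1.1} for the linear choice $F_0(s)=2f(0)s$, which still satisfies the standing assumptions \eqref{eq1.3} — the paper's proof has the same implicit identification, so this is consistent with, not a departure from, the original argument.
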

	\begin{proof}
		Let $\mathbf{u}$ be a homogeneous global solution of degree 2 such that exists an $\epsilon>0$ small enough such that $\|\mathbf{u}-\mathbf{h}\|_{L^1(B_1;\mathbb{R}^m)}\leq \epsilon$ for some $\mathbf{h}\in \mathbb{H}$.	
		Without loss of generality, one may assume that $$\mathbf{h}(x)=\frac{f(0)\text{max}(x_n,0)^2}{2}\mathbf{e}^1.$$ Hence from  Proposition \ref{pro2.3} and Proposition 4.2, it yields that $\mathbf{u}\in \mathbb{H}$ for sufficiently small $\epsilon$.
	\end{proof}

	The following two propositions give important estimates of balanced energy for homogeneous global solutions of degree 2.
	
	\begin{proposition}\label{prop4.4}
		Let $\alpha_n$ be the constant defined in Remark \ref{rem1.7}, then
		\begin{equation}\label{eq4.2}
			\alpha_n=\frac{{f^2(0)} \mathcal{H}^{n-1}(\partial B_1)}{2n(n+2)}= \frac{{f^2(0)}|B_1|}{2(n+2)}.
		\end{equation}
		Let $\mathbf{u}$ be a homogeneous global solution of degree 2. Then one has
		\begin{equation}\label{eq4.3}
			M(\mathbf{u})\geq  \alpha_n \frac{|B_1\cap \{|\mathbf{u}|>0\}|}{|B_1|}.
		\end{equation}
		In particular,
		\begin{equation}\label{eq4.4}
			M(\mathbf{u})\geq {\alpha_n}, \ \  \text{if}\  |\mathbf{u}|>0 \ \ \text{a.e. \ in }\  B_1.
		\end{equation}
		
	\end{proposition}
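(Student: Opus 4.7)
The plan is to derive a single closed-form expression for $M(\mathbf{v})$ valid for every 2-homogeneous global solution $\mathbf{v}$, and then to extract all three statements from it. Since $\mathbf{v}$ is 2-homogeneous, Euler's identity gives $\nabla\mathbf{v}\cdot\nu=2\mathbf{v}$ on $\partial B_{1}$. Testing \eqref{eq4.1} against $\mathbf{v}$ and integrating by parts on $B_{1}$ yields
\begin{equation*}
\int_{B_{1}}|\nabla\mathbf{v}|^{2}\,dx=2\int_{\partial B_{1}}|\mathbf{v}|^{2}\,d\mathcal{H}^{n-1}-f(0)\int_{B_{1}}|\mathbf{v}|\,dx,
\end{equation*}
so $M(\mathbf{v})=f(0)\int_{B_{1}}|\mathbf{v}|\,dx$. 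Using $|\mathbf{v}(r\omega)|=r^{2}|\mathbf{v}(\omega)|$ in polar coordinates, this reduces to the key identity
\begin{equation*}
M(\mathbf{v})=\frac{f(0)}{n+2}\int_{\partial B_{1}}|\mathbf{v}|\,d\mathcal{H}^{n-1}.
\end{equation*}

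For \eqref{eq4.2}, apply this identity to the canonical half-space solution $\mathbf{h}(x)=\tfrac{f(0)(x_{n})_{+}^{2}}{2}\mathbf{e}^{1}$. Using $\int_{B_{1}}(x_{n})_{+}^{2}\,dx=\tfrac{1}{2n}\int_{B_{1}}|x|^{2}\,dx=\tfrac{\mathcal{H}^{n-1}(\partial B_{1})}{2n(n+2)}$ gives $M(\mathbf{h})=\tfrac{f(0)^{2}\mathcal{H}^{n-1}(\partial B_{1})}{4n(n+2)}$, so $\alpha_{n}=2M(\mathbf{h})$ matches both forms in \eqref{eq4.2} via $\mathcal{H}^{n-1}(\partial B_{1})=n|B_{1}|$; the value is visibly independent of $\nu$ and $\mathbf{e}$, confirming Remark \ref{rem1.7}. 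For \eqref{eq4.3}, write $U=|\mathbf{u}|=r^{2}g(\omega)$, set $\Omega=\{g>0\}\subset\partial B_{1}$ and $A=\{U>0\}\cap B_{1}$. From \eqref{2.2} in the proof of Proposition \ref{non-degeneracy} one has $\Delta U\geq f(0)$ pointwise in $\{U>0\}$; combined with the standard decomposition $\Delta(r^{2}g)=2ng+\Delta_{S^{n-1}}g$, this rewrites as $\Delta_{S^{n-1}}g\geq f(0)-2ng$ in $\Omega$. Integrating over the superlevel sets $\Omega_{\varepsilon}=\{g>\varepsilon\}$ (whose boundaries are smooth for a.e.~$\varepsilon>0$ by Sard) and applying the spherical divergence theorem, the boundary term on $\{g=\varepsilon\}$ has sign $\leq 0$ since $g>\varepsilon$ inside $\Omega_{\varepsilon}$; whence $2n\int_{\Omega_{\varepsilon}}g\,d\mathcal{H}^{n-1}\geq f(0)\mathcal{H}^{n-1}(\Omega_{\varepsilon})$, and letting $\varepsilon\to 0^{+}$ by monotone convergence gives $\int_{\Omega}g\,d\mathcal{H}^{n-1}\geq\tfrac{f(0)\mathcal{H}^{n-1}(\Omega)}{2n}$. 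Because $\{U>0\}$ is a cone by 2-homogeneity, $|A|/|B_{1}|=\mathcal{H}^{n-1}(\Omega)/\mathcal{H}^{n-1}(\partial B_{1})$; inserting into the key identity,
\begin{equation*}
M(\mathbf{u})=\frac{f(0)}{n+2}\int_{\Omega}g\,d\mathcal{H}^{n-1}\geq \frac{f(0)^{2}\mathcal{H}^{n-1}(\Omega)}{2n(n+2)}=\alpha_{n}\frac{|A|}{|B_{1}|},
\end{equation*}
which is \eqref{eq4.3}; \eqref{eq4.4} is the special case $|A|=|B_{1}|$.

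The main technical obstacle is the spherical divergence identity, since $g\in C^{0,1}(S^{n-1})$ only and $\partial\Omega$ may be highly irregular. The Sard/approximation device above handles this cleanly: on each $\Omega_{\varepsilon}$ the boundary is smooth, only one-sided sign control on $\{g=\varepsilon\}$ is needed to close the estimate, and the limit $\varepsilon\to 0^{+}$ is harmless by monotone convergence. No Harnack-type or a priori regularity statement on $\partial\Omega$ is required.
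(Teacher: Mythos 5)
Your proof is correct, and for the identity \eqref{eq4.2} it proceeds essentially as the paper does: the paper also uses the divergence theorem plus $2$-homogeneity to arrive at $M(\mathbf{h})=\int_{B_1}\bigl(2f(0)-\Delta\mathbf{h}\cdot\frac{\mathbf{h}}{|\mathbf{h}|}\bigr)|\mathbf{h}|\,dx=f(0)\int_{B_1}|\mathbf{h}|\,dx$, which is your key identity specialised to $\mathbf{h}$. What you do differently—and more transparently—is to extract that identity once and for all for every $2$-homogeneous global solution, $M(\mathbf{v})=f(0)\int_{B_1}|\mathbf{v}|\,dx=\frac{f(0)}{n+2}\int_{\partial B_1}|\mathbf{v}|\,d\mathcal{H}^{n-1}$, and to deduce all three statements from it. For \eqref{eq4.3} the paper gives no argument at all (it only cites the corresponding result in Andersson--Shahgholian--Uraltseva--Weiss, referring to ``the homogeneity of $\mathbf{u}$ and the divergence theorem''); your superlevel-set argument on the sphere, using $\Delta_{S^{n-1}}g\ge f(0)-2ng$ in $\{g>0\}$ and the one-sided sign of the outward normal derivative on $\{g=\varepsilon\}$, is a sound and self-contained way to supply that missing step, and it is clearly in the spirit of the cited proof. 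One small remark that actually strengthens your write-up: you worry that $g\in C^{0,1}$ only, so that Sard may not apply in high dimensions. In fact, on $\{|\mathbf{u}|>0\}$ the system \eqref{eq4.1} is a semilinear elliptic system with smooth nonlinearity, so $\mathbf{u}$ (hence $U=|\mathbf{u}|$ and its trace $g$) is $C^\infty$ there by interior bootstrap; Sard then applies without reservation, and your Sard/monotone-convergence device closes cleanly. Alternatively, the same inequality follows from testing $\Delta_{S^{n-1}}g\ge f(0)-2ng$ with $\psi_\varepsilon(g)$ for a smooth nondecreasing cutoff $\psi_\varepsilon$ vanishing below $\varepsilon$, which avoids any mention of Sard.
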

	
	\begin{proof}
		Firstly, to show \eqref{eq4.2},  we choose a unit vector $\mathbf{e}\in \partial B_1 \subset \mathbb{R}^m$ and let $\mathbf{h}(x)=\mathbf{e} f(0)\max (x_n,0)^2/2$. Obviously, by the definition of $\alpha_n$ in Remark \ref{rem1.7},  it infers
		\begin{equation*}
			\frac{\alpha_n}{2}=M(\mathbf{h})=\int_{B_{1}}|\nabla \mathbf{h}|^2 +2f(0)|\mathbf{h}|dx-2\int_{\partial B_1} |\mathbf{h}|^2 d \mathcal{H}^{n-1}.
		\end{equation*}
		Obtain the following equation through divergence theorem since $\mathbf{h}$ is homogeneous function of degree 2,
		\begin{equation*}
			M(\mathbf{h})=\int_{B_{1}} (2f(0)-\Delta \mathbf{h}\displaystyle\frac{\mathbf{h}}{|\mathbf{h}|}) |\mathbf{h}| dx.
		\end{equation*}
		And then combined with the definition of $\mathbf{h}$, it yields that
		\begin{equation*}
			M(\mathbf{h})=\frac{f^2(0)}{2}\int_{B_1} \max (x_n,0)^2 dx=\frac{f^2(0)}{4}\int_{B_1} x_n^2 dx=\frac{f^2(0)}{4n}\int_{B_1} |x|^2 dx =\frac{f^2(0)  |B_1|}{4(n+2)},
		\end{equation*}
		In fact the symmetry of $B_1$ gives that $n\int_{B_1} x_n^2dx= \int_{B_1}|x|^2dx$ and
		$$\int_{B_{1}}|x|^2=\int_0^1\int_{\partial B_r} r^2 d\mathcal{H}^{n-1}dr=\frac{1}{n+2} \mathcal{H}^{n-1}(\partial B_1)=\frac{n}{n+2}|B_1|.$$
		Therefore which shows \eqref{eq4.2}.
		
		Next, the proof of inequality \eqref{eq4.3} can be obtained as in \cite[Proposition 4]{asu}, from using the homogeneity of $\mathbf{u}$ and the divergence theorem. In particular, the estimate \eqref{eq4.4} holds true.
	\end{proof}

	\begin{corollary}\label{coro4.5}
		Let $\mathbf{u}$ be a homogeneous global solution of degree 2 as in Definition \ref{global}. Then,
		\begin{equation}\label{eq4.7}
			M(\mathbf{u})\geq \alpha_n \max \left(\frac{1}{2}, \frac{| B_1 \cap \{|\mathbf{u}|>0\}|}{ |B_1|}\right)\geq  \frac{\alpha_n}{2},
		\end{equation}
		and $\displaystyle M(\mathbf{u})=\frac{\alpha_n}{2}$ implies that $\mathbf{u}\in \mathbb{H}.$ Moreover, $$ \frac{\alpha_n}{2}< \bar{\alpha}_n :=\inf \{ M(\mathbf{v}): \mathbf{v}\  \text{ is a homogeneous solution of degree 2, but} \ \mathbf{v}\notin \mathbb{H}\}.$$
	\end{corollary}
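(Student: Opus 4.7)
The plan is to reduce the corollary to the classical scalar obstacle problem for $v := |\mathbf{u}|$ and then invoke Caffarelli's classification of its $2$-homogeneous solutions. I would first extract from the proof of Proposition~\ref{prop4.4} the clean identity
$$M(\mathbf{u}) \;=\; f(0)\int_{B_1}|\mathbf{u}|\,dx$$
for every $2$-homogeneous solution $\mathbf{u}$ of \eqref{eq4.1}; this follows by integration by parts using $\nabla\mathbf{u}\cdot\nu = 2\mathbf{u}$ on $\partial B_1$ (a consequence of $2$-homogeneity) together with the equation.

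Next comes the scalar reduction. By the connected-component argument used in the proof of Proposition~\ref{prop4.1}, in every connected component $\Omega$ of $\{|\mathbf{u}|>0\}$ there is a fixed unit vector $\mathbf{a}_\Omega \in \mathbb{R}^m$ with $\mathbf{u}=\mathbf{a}_\Omega\, v$, and hence $\Delta v = f(0)$ in $\Omega$. The $W^{2,p}$-regularity of $\mathbf{u}$, together with the growth estimate (Proposition~\ref{growth}) which ensures that $v$ and $\nabla v$ vanish along the free boundary, makes $v$ a nonnegative, $2$-homogeneous, $W^{2,\infty}_{\mathrm{loc}}$-solution of the classical scalar obstacle problem $\Delta v = f(0)\chi_{\{v>0\}}$ on $\mathbb{R}^n$. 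Caffarelli's classification of such solutions then produces a dichotomy: either \textbf{(i)} $v$ is a nonnegative $2$-homogeneous quadratic polynomial with $\Delta v = f(0)$ globally, so $|B_1\cap\{v>0\}| = |B_1|$ modulo a null set; or \textbf{(ii)} $v = f(0)\max(x\cdot\nu,0)^2/2$ for some unit vector $\nu$, in which case $\{v>0\}$ has a single connected component and $\mathbf{u} = \mathbf{a}_\Omega\, v \in \mathbb{H}$.

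A direct computation using the identity above and the value $\alpha_n = f^2(0)|B_1|/(2(n+2))$ from Proposition~\ref{prop4.4} then gives $M(\mathbf{u}) = \alpha_n$ in case (i) (after diagonalizing the polynomial one finds $\int_{B_1}v = f(0)|B_1|/(2(n+2))$ regardless of which polynomial) and $M(\mathbf{u}) = \alpha_n/2$ in case (ii). This yields $M(\mathbf{u}) \geq \alpha_n\max\bigl(1/2,\,|B_1\cap\{|\mathbf{u}|>0\}|/|B_1|\bigr) \geq \alpha_n/2$, with equality in the lower bound forcing case (ii) and hence $\mathbf{u}\in\mathbb{H}$. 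The strict gap $\bar\alpha_n > \alpha_n/2$ is then automatic: any $\mathbf{u}\notin\mathbb{H}$ must be in case (i), so $M(\mathbf{u}) = \alpha_n > \alpha_n/2$. The main obstacle I anticipate is a rigorous scalar reduction, specifically the proportionality $\mathbf{u}=\mathbf{a}_\Omega v$ on each component (a global version of the Proposition~\ref{prop4.1} argument) and enough regularity of $v$ across the free boundary so that $\Delta v = f(0)\chi_{\{v>0\}}$ holds distributionally on $\mathbb{R}^n$ with no concentrated boundary contribution, allowing Caffarelli's framework to be applied without qualification. Should an explicit appeal to the classification be undesirable, the strict gap can be obtained alternatively by a compactness-and-contradiction argument built on the isolation result Corollary~\ref{isolated}, although that route does not yield the sharper quantitative value $M(\mathbf{u}) = \alpha_n$ for non-half-space solutions.
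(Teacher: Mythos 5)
Your approach is genuinely different from the paper's and, if it could be made rigorous, it would give a stronger quantitative statement; but the gap you flagged at the end --- establishing the proportionality $\mathbf{u}=\mathbf{a}_\Omega v$ on arbitrary connected components --- is exactly where the argument breaks, and it is precisely the difficulty the paper's proof is designed to avoid. The eigenvalue argument behind the proportionality (Appendix~B, parts~(3)--(4)) only applies to spherical components $\Omega'$ lying in a near-half-sphere $\{x_n>-\delta\}$: that hypothesis is what guarantees $2n$ is the \emph{first} eigenvalue of $-\Delta'+q$, with one-dimensional eigenspace spanned by a positive eigenfunction. Corollary~\ref{coro4.5} puts no such restriction on the support of $\mathbf{u}$, and if a component $\Omega'$ covers a large portion of $\partial B_1$, $2n$ could be a higher eigenvalue of multiplicity greater than one, so the components $u^i$ need not be mutually proportional. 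Writing $\mathbf{u}=v\,\omega$ with $\omega=\mathbf{u}/|\mathbf{u}|$, one has $A:=|\nabla\mathbf{u}|^2-|\nabla v|^2=v^2|\nabla\omega|^2\geq 0$ and $\Delta v=A/v+f(0)$ in $\{v>0\}$; if $\omega$ is not constant, then $\Delta v>f(0)$ somewhere, $v$ is only a \emph{super}solution of the scalar obstacle problem, and Caffarelli's classification no longer applies, so your cases (i) and (ii) do not exhaust the possibilities. (There is also a minor regularity overreach: you assert $v\in W^{2,\infty}_{\mathrm{loc}}$, but the paper only establishes $\mathbf{u}\in W^{2,p}$ for $p<\infty$ and explicitly points out that $W^{2,\infty}$ is not available in this generality.)

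The paper's proof sidesteps the classification of all $2$-homogeneous solutions entirely. Instead, when $|B_1\cap\{|\mathbf{u}|=0\}|>0$, non-degeneracy (Proposition~\ref{non-degeneracy}) yields a full ball $B_\rho(y)\subset\{|\mathbf{u}|=0\}$; picking $z\in\partial B_\rho(y)\cap\partial\{|\mathbf{u}|>0\}$ and blowing up at $z$ gives a $2$-homogeneous solution whose support lies in a half-space, so Proposition~\ref{prop4.1} \emph{is} applicable and the blow-up lies in $\mathbb{H}$ with energy $\alpha_n/2$. The auxiliary functional $\Phi$ (Weiss' energy with $F$ replaced by $F'(0)s$) is monotone, so $\alpha_n/2=\Phi(\mathbf{u},z,0^+)\leq\Phi(\mathbf{u},0,+\infty)=M(\mathbf{u})$. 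The equality case is handled by analyzing when the monotonicity of $\Phi$ degenerates, showing $\mathbf{u}$ is translation-invariant along $z$; and the case $|B_1\cap\{|\mathbf{u}|=0\}|=0$ is dispatched directly by inequality~\eqref{eq4.4}. This is a local-to-global argument that never needs the proportionality on large components. To repair your proposal you would either need a proof of the proportionality for arbitrary connected components (which neither this paper nor~\cite{asu} supplies), or you should switch to the compactness-plus-isolation route you mention at the end, which is closer in spirit to what the paper actually uses.
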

	\begin{proof}
		If $ |B_1 \cap \{|\mathbf{u}|=0\}|=0$, then \eqref{eq4.4} implies the conclusion \eqref{eq4.7} immediately.
		
		On the other hand, if $ |B_1 \cap \{|\mathbf{u}|=0\}|\neq0$, then by Proposition \ref{non-degeneracy} (the non-degeneracy property), $\{|\mathbf{u}|=0\}$ must contain some open ball $B_{\rho} (y)$ and we may choose  a point $z\in \partial B_{\rho}(y)\cap \partial \{|\mathbf{u}|>0\}$.
		
		Let $\mathbf{u}_0$ be a blow-up limit of $\mathbf{u}$ at $z$. Obviously, supp $\mathbf{u}_0$ is contained in a half-space, then it follows from Proposition \ref{prop4.1} that $\mathbf{u}_0\in \mathbb{H}$.
		By observing the form of balanced energy, we can set
		$$\Phi (\mathbf{u},x,r)=\frac{1}{r^{n+2}}\int_{B_r(x)}|\nabla\mathbf{u}|^2+F'(0)|\mathbf{u}| dx-\frac{2}{r^{n+3}}\int_{\partial B_r(x)}|\mathbf{u}|^2 d\mathcal{H}^{n-1}.$$
		
		We can deduce that the limit $\Phi(\mathbf{u},x^0,+\infty)$ does not depend on the choice of $x^0$, since $\mathbf{u}_0$ is a homogeneous global solution of degree 2, $|\mathbf{u}(x)|\leq C|x|^2$ and $|\nabla\mathbf{u}(x)|\leq C|x|$ for every $x\in \mathbb{R}^n$.

		Recalling Remark \ref{rem1.7}, Lemma \ref{monotonicity} and the homogeneity of $\mathbf{u}$,  one can obtain
		\begin{eqnarray*}
			\frac{\alpha_n}{2}=M(\mathbf{u}_0)&=&\int_{B_1(0)}|\nabla\mathbf{u}_0|^2+F'(0)|\mathbf{u}_0|\ dx-2\int_{\partial B_1(0)}|\mathbf{u}_0|^2 d\mathcal{H}^{n-1}\\
			&=&\displaystyle\lim_{r\rightarrow 0}\frac{1}{r^{n+2}}\int_{B_r(z)}|\nabla\mathbf{u}|^2+F'(0)|\mathbf{u}|\ dx-\frac{2}{r^{n+3}}\int_{\partial B_r(z)}|\mathbf{u}|^2 d\mathcal{H}^{n-1}\\
			&=& \Phi(\mathbf{u},z,0^+)\leq\Phi(\mathbf{u},z,+\infty)=\Phi(\mathbf{u},0,+\infty)=M(\mathbf{u}),
		\end{eqnarray*}
		which finishes the proof of \eqref{eq4.7}.
		Indeed, if $F(|\mathbf{u}|)$ of $W(\mathbf{u},x,r)$ equals to $F'(0)|\mathbf{u}|$, then $\Phi (\mathbf{u},x,r)=W(\mathbf{u},x,r)$, hence by Lemma \ref{monotonicity} we have that
		$$\frac{d\Phi(\mathbf{u},x^0,r)}{dr} =\frac{2}{r^{n+2}} \int_{\partial B_r(x_0)}\left(\nabla\mathbf{u}\cdot\nu-\frac{2\mathbf{u}}{r}\right)^2d\mathcal{H}^{n-1}\geq 0,$$
		thus, it follows that
		\begin{eqnarray}\label{eq4.12}
			\Phi(\mathbf{u},z,0^+)\leq\Phi(\mathbf{u},x^0,r)\leq \Phi(\mathbf{u},0,+\infty), \quad \text{for}\quad z\in \partial B_\rho(y)\cap\partial\{|\mathbf{u}|>0\}.
		\end{eqnarray}

		In the following, we shall prove that  $\displaystyle M(\mathbf{u})=\displaystyle\frac{\alpha_n}{2}$ implies $\mathbf{u}\in \mathbb{H}$.  In fact,
		let $y = z +\rho \mathbf{e}$ with a unit vector $\mathbf{e}$. It follows from the homogeneity of $\mathbf{u}$ that $\mathbf{e}$ is orthogonal to $y-z$. There are two cases to be considered.
		
		Case a) $z = 0$. Then  $\mathbf{u}(z)=\mathbf{u}(0)=0$. Since $M(\mathbf{u})=\displaystyle\frac{\alpha_n}{2}$, the estimate gives \eqref{eq4.7} and then it infers
		$$\frac{|B_1 \cap \{|\mathbf{u}|>0\}|}{|B_1|}\leq \frac{1}{2},$$
		which implies
		$$\frac{ |B_1 \cap \{|\mathbf{u}|=0\}|}{|B_1|}\geq \frac{1}{2}.$$
		Hence if $z = 0$, $|\mathbf{u}(x)|=0$ in a half space $(x\cdot\mathbf{e})>0,$ which concludes that $\mathbf{u}\in \mathbb{H}$ from Proposition \ref{prop4.1}.
		
		Case b)  $|z| > 0$. Owing to \ref{eq4.12} and $\displaystyle M(\mathbf{u})=\displaystyle\frac{\alpha_n}{2}$, it follows that
		$$\frac{\alpha_n}{2}=M(\mathbf{u}_0)= \Phi(\mathbf{u},z,0^+)\leq\Phi(\mathbf{u},z,+\infty)=\Phi(\mathbf{u},0,+\infty)=M(\mathbf{u})=\frac{\alpha_n}{2}.$$
		Hence $\Phi(\mathbf{u},z,r)$ does not depend on $r$. Next we take the different $r$, and take $r=k\in \mathbb{R}$ yields the following equation
		\begin{eqnarray*}
			\Phi (\mathbf{u},x,k)&=&\frac{1}{r^{n+2}}\int_{B_r(x)}|\nabla\mathbf{u}|^2+F'(0)|\mathbf{u}| \ dx-\frac{2}{r^{n+3}}\int_{\partial B_r(x)}|\mathbf{u}|^2 d\mathcal{H}^{n-1}\\
			&=&\int_{B_1(0)}\left|\nabla\frac{\mathbf{u}(z+kx)}{k^2}\right|^2+F'(0)\left|\frac{\mathbf{u}(z+kx)}{k^2}\right| \ dx -\int_{\partial B_1(0)}\left|\frac{\mathbf{u}(z+kx)}{k^2}\right|^2 d\mathcal{H}^{n-1}.\\
		\end{eqnarray*}
		On the other hand, taking $r=1$ gives to another equation
		\begin{equation*}
			\Phi (\mathbf{u},x,1)=\int_{B_1(0)}|\nabla\mathbf{u}(z+x)|^2+F'(0)|\mathbf{u}(z+x)| \ dx-\int_{\partial B_1(0)}|\mathbf{u}(z+x)|^2 d\mathcal{H}^{n-1}.
		\end{equation*}
		This concludes that $\mathbf{u}(z + kx) =k^2\mathbf{u}(z+x)$ for any $k>0,x\in \mathbb{R}^n$. Noting that the homogeneity of $\mathbf{u}$ also gives that $\mathbf{u}(z + kx) =k^2\mathbf{u}(z/k+x)$, then it yields that
		$$\mathbf{u}(z+x)=\mathbf{u}(z/k+x) \ \text{for any} \ k>0,x\in \mathbb{R}^n,$$  which implies that $\mathbf{u}$ is constant in direction of vector $z$. Since ${e}$ is orthogonal to $y-z$,  then $\mathbf{u}\in \mathbb{H}$.

	\end{proof}

	As a direct  corollary of the upper-semicontinuity of solution, we can easily deduce the following results.
	\begin{corollary}\label{coro5.8}
		The set of regular free boundary points $\mathcal{R}_{\mathbf{u}}$ is open relative to $\Gamma_0(\mathbf{u})$.
	\end{corollary}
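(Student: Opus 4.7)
The plan is to combine the upper-semicontinuity of $x\mapsto W(\mathbf{u},x,0+)$ established in Lemma \ref{property}(3) with the strict energy gap $\alpha_n/2 < \bar{\alpha}_n$ provided by Corollary \ref{coro4.5}. The idea is that regular points are characterized by the minimal possible value $\alpha_n/2$ of the energy density, and since any nearby free boundary point in $\Gamma_0(\mathbf{u})$ must attain a value that is either exactly $\alpha_n/2$ or at least $\bar{\alpha}_n$, the semicontinuity forces the former option in a whole neighborhood.

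Concretely, fix $x^0\in\mathcal{R}_{\mathbf{u}}$, so that $W(\mathbf{u},x^0,0+)=\alpha_n/2$. Choose $\varepsilon>0$ with $\alpha_n/2+\varepsilon<\bar{\alpha}_n$, which is possible by the strict inequality in Corollary \ref{coro4.5}. First, by Lemma \ref{property}(3) there is a neighborhood $U$ of $x^0$ in $D$ such that $W(\mathbf{u},x,0+)<\alpha_n/2+\varepsilon$ for every $x\in U$. Next, for any $x\in U\cap\Gamma_0(\mathbf{u})$, I would pick a blow-up sequence $\mathbf{u}_{x,r_k}$ converging weakly in $W^{1,2}_{\text{loc}}$ to a limit $\mathbf{u}_0$; by Lemma \ref{property}(2), $\mathbf{u}_0$ is a $2$-homogeneous global solution in the sense of Definition \ref{global} and
\[
W(\mathbf{u},x,0+)=M(\mathbf{u}_0).
\]
Then Corollary \ref{coro4.5} gives $M(\mathbf{u}_0)\geq\alpha_n/2$, while our choice of $U$ gives $M(\mathbf{u}_0)<\alpha_n/2+\varepsilon<\bar{\alpha}_n$. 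By the definition of $\bar{\alpha}_n$ this forces $\mathbf{u}_0\in\mathbb{H}$, and therefore $M(\mathbf{u}_0)=\alpha_n/2$.

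Consequently $W(\mathbf{u},x,0+)=\alpha_n/2$ for every $x\in U\cap\Gamma_0(\mathbf{u})$, which is exactly the defining condition of $\mathcal{R}_{\mathbf{u}}$ in Definition \ref{regular}. Thus $U\cap\Gamma_0(\mathbf{u})\subset\mathcal{R}_{\mathbf{u}}$, proving that $\mathcal{R}_{\mathbf{u}}$ is open relative to $\Gamma_0(\mathbf{u})$. There is no real obstacle here once the gap statement in Corollary \ref{coro4.5} is in hand; the only mild subtlety is ensuring that every blow-up at a nearby free boundary point is a homogeneous global solution so that the gap can be applied, and this is precisely what Lemma \ref{property}(2) delivers.
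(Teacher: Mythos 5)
Your proof is correct and follows essentially the same route as the paper: upper semicontinuity of $x\mapsto W(\mathbf{u},x,0+)$ from Lemma~\ref{property}(3), combined with the lower bound and energy gap of Corollary~\ref{coro4.5}. In fact your version is slightly more careful at the crucial step: the paper's proof simply combines $\limsup_{x\to x_0}W(\mathbf{u},x,0+)\leq\alpha_n/2$ with $W(\mathbf{u},x,0+)\geq\alpha_n/2$ and concludes equality, but semicontinuity alone only gives $W(\mathbf{u},x,0+)<\alpha_n/2+\varepsilon$ for $x$ near $x_0$, which is not yet equality; your proposal correctly makes explicit that one must invoke the strict gap $\alpha_n/2<\bar\alpha_n$ (together with Lemma~\ref{property}(2), which guarantees that every blow-up is a $2$-homogeneous global solution) to rule out the open interval $(\alpha_n/2,\bar\alpha_n)$ and so pin the value to $\alpha_n/2$. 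This is the step the paper leaves implicit under the phrase "recalling Corollary~\ref{coro4.5} again," and your spelled-out version is the right reading of it.
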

	\begin{proof}
		For any $x_0\in \mathcal{R}_{\mathbf{u}}$, there is a $\delta=\delta(x_0)>0$, such that for any $x\in B_{\delta(x_0)}\cap \Gamma_0(\mathbf{u})$,
		$$\displaystyle\limsup_{x\rightarrow x_0} W(\mathbf{u},x,0+) \leq W(\mathbf{u},x_0,0+)=\frac{\alpha_n}{2}.$$
		Here we have used the upper-semicontinuity of Lemma \ref{property}. Recalling Corollary \ref{coro4.5} again, one has
		$$\displaystyle\lim_{r\rightarrow 0} W(\mathbf{u},x,r) \geq \frac{\alpha_n}{2},$$
		which implies $\displaystyle\lim_{r\rightarrow 0} W(\mathbf{u},x,r)=\frac{\alpha_n}{2}$ and it leads to $x\in\mathcal{R}_{\mathbf{u}}.$ Consequently, the regular set $\mathcal{R}_{\mathbf{u}}$ is open relative to $\Gamma_0(\mathbf{u})$.
	\end{proof}
	\vspace{10pt}

	\vspace{10pt}
	\section{Epiperimetric inequality}
This section is devoted to establishing {\it epiperimetric inequality}, an important role in indicating an energy decay estimate, and the uniqueness of blow-up limit in analyzing the regularity of free boundary. However, the epiperimetric inequality presented in \cite[Theorem 1]{asu} cannot be directly applied to our general non-degenerate case, as the function $F$ lacks scaling properties. Motivated by the linear case \cite{asu}, when certain assumptions on $F$ are satisfied, we carefully analyze Weiss' boundary-adjusted energy and the scaling parameter. Through this analysis, we successfully establish a new epiperimetric inequality for this situation.

	\begin{lemma}{ (Epiperimetric inequality)}\label{epiperimetric}
		There exist $ \kappa \in (0,1)$ and $ \delta>0 $, such that if $\mathbf{c}$ is  a homogeneous global function of degree 2 satisfying $$\|\mathbf{c}-\mathbf{h}\|_{W^{1,2}(B_1;\mathbb{R}^m)}+\|\mathbf{c}-\mathbf{h}\|_{L^{\infty}(B_1;\mathbb{R}^m)}\leq \delta,$$ for some $\mathbf{h}\in\mathbb{H}$, then there exists a vector $\mathbf{v}\in W^{1,2}(B_1;\mathbb{R}^m)$ with $\mathbf{v}=\mathbf{c}$ on $\partial B_1$ satisfying
		\begin{equation}\label{6.1}
			H(\mathbf{v},s)\leq (1-\kappa) H(\mathbf{c},s)+\kappa M(\mathbf{h}) \  \text{for any } \  s\in(0,  \delta^2].
		\end{equation}
	\end{lemma}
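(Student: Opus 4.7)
My strategy is to leverage the linear epiperimetric inequality of Andersson-Shahgholian-Uraltseva-Weiss \cite{asu} for the functional $M$ (which is the $s\to 0+$ limit of $H(\cdot,s)$), and then transfer that inequality to $H(\cdot,s)$ for $s\in(0,\delta^2]$ via the Taylor expansion of $F$ at the origin. The point is that the nonlinear correction $H(\mathbf{v},s)-M(\mathbf{v})$ is of order $s^2\le\delta^4$, whereas the closeness parameter is $\delta$, so the double smallness $s\le\delta^2$ provides enough slack to absorb the nonlinearity.

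More concretely, I would proceed in three steps. \emph{Step 1:} Invoke the epiperimetric inequality from \cite[Theorem 1]{asu} (applied after absorbing the constant $f(0)$) to produce $\kappa_0\in(0,1)$, $\delta_0>0$, and for each $2$-homogeneous $\mathbf{c}$ with $\|\mathbf{c}-\mathbf{h}\|_{W^{1,2}}+\|\mathbf{c}-\mathbf{h}\|_{L^\infty}\le\delta_0$ and $\mathbf{h}\in\mathbb{H}$, a competitor $\mathbf{w}^*\in W^{1,2}(B_1;\mathbb{R}^m)$ with $\mathbf{w}^*=\mathbf{c}$ on $\partial B_1$, uniform bound $\|\mathbf{w}^*\|_{L^\infty(B_1)}\le C$, and
\[
M(\mathbf{w}^*)\le (1-\kappa_0)M(\mathbf{c})+\kappa_0 M(\mathbf{h}).
\]
\emph{Step 2:} Use $F(0)=0$, $F'(0)=2f(0)$ and $0\le F''\le C_0$ with Taylor's theorem to obtain the pointwise bound $0\le F(t)-F'(0)t\le \tfrac{C_0}{2}t^2$ for $t\ge 0$; substituting $t=s^2|\mathbf{v}|$ and integrating,
\[
0\le H(\mathbf{v},s)-M(\mathbf{v})=\int_{B_1}\frac{F(s^2|\mathbf{v}|)-F'(0)s^2|\mathbf{v}|}{s^2}\,dx\le \frac{C_0 s^2}{2}\int_{B_1}|\mathbf{v}|^2\,dx,
\]
so the nonlinear correction is $O(s^2)$ for any $L^\infty$-bounded $\mathbf{v}$. \emph{Step 3:} Setting $\mathbf{v}=\mathbf{w}^*$ and combining Steps 1--2,
\[
H(\mathbf{w}^*,s)-\bigl[(1-\kappa)H(\mathbf{c},s)+\kappa M(\mathbf{h})\bigr]\le (\kappa-\kappa_0)(M(\mathbf{c})-M(\mathbf{h}))+Cs^2 - (1-\kappa)\bigl(H(\mathbf{c},s)-M(\mathbf{c})\bigr),
\]
where the last quantity is nonnegative. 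Picking $\kappa\in(0,\kappa_0)$ produces a slack $(\kappa_0-\kappa)(M(\mathbf{c})-M(\mathbf{h}))\ge 0$ (using that $\mathbf{h}$ is a local minimum of $M$ in the admissible class, as it satisfies the Euler--Lagrange equation and the quadratic term is strictly convex), and choosing $\delta$ sufficiently small so that $Cs^2\le C\delta^4$ is absorbed by this slack together with the nonnegative error term yields the desired inequality.

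The principal obstacle, as highlighted in the introduction, is precisely that $F$ lacks scaling invariance, so the discrepancy $H(\mathbf{v},s)-M(\mathbf{v})$ does not vanish as in the linear setting of \cite{asu}. The critical regime is when $\mathbf{c}$ is very close to $\mathbf{h}$, since then both the linear gap $M(\mathbf{c})-M(\mathbf{h})$ and the correction $H(\mathbf{c},s)-M(\mathbf{c})$ become small and the error $Cs^2$ is no longer obviously dominated. The double smallness $s\le\delta^2$, giving $s^2\le\delta^4$ of strictly higher order than the closeness parameter $\delta$, is essential here; the main technical task is to track the universal constants in the interplay between $\delta$ and $s$, possibly refining the competitor $\mathbf{w}^*$ of \cite{asu} in the transition region between $\{|\mathbf{h}|>0\}$ and its complement, so that $\kappa<\kappa_0$ can be chosen uniformly. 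This careful matching of the nonlinear correction against the linear epiperimetric gain constitutes the novel contribution over the linear case.
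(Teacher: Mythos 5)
Your approach is genuinely different from the paper's. You aim to import the linear epiperimetric inequality of \cite{asu} as a black box and treat the non-scaling-invariant correction $H(\cdot,s)-M(\cdot)$ as an $O(s^2)$ perturbation. The paper instead re-runs the entire contradiction/linearization machinery of \cite{asu} from scratch: it normalizes $\delta_k=\inf_{\mathbf{h}\in\mathbb{H}}\|\mathbf{c}_k-\mathbf{h}\|_{W^{1,2}}$, blows up $\mathbf{w}_k=(\mathbf{c}_k-\mathbf{h}_k)/\delta_k$, and exploits $0<s_k<\delta_k^2$ so the $F$-terms enter as $o(1)$ inside the compactness argument (Claims~1--4, in particular \eqref{eq5.6}--\eqref{5.10}). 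Your Step~1 and Step~2 are both correct, and the transfer idea is natural because it would avoid redoing that machinery, but Step~3 contains a real gap.

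The reduction in Step~3 ultimately requires
\begin{equation*}
Cs^2\ \le\ (\kappa_0-\kappa)\bigl(M(\mathbf{c})-M(\mathbf{h})\bigr)\;+\;(1-\kappa)\bigl(H(\mathbf{c},s)-M(\mathbf{c})\bigr).
\end{equation*}
You justify nonnegativity of the first slack by asserting that $\mathbf{h}$ is a local minimum of $M$ ``in the admissible class,'' but this is not available in the present generality: $\mathbf{h}$ minimizes $M$ only among competitors sharing $\mathbf{h}$'s own boundary trace, whereas $\mathbf{c}$ carries a different boundary trace, and $M$ is not convex (the boundary contribution $-2\int_{\partial B_1}|\cdot|^2$ is concave). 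Worse, even granting $M(\mathbf{c})\ge M(\mathbf{h})$, the gap can vanish identically relative to the closeness parameter: along the degenerate directions of $\mathbb{H}$ (small rotations of $\nu$ in $\mathbb{R}^n$ or of $\mathbf{e}$ in $\mathbb{R}^m$) one has $M(\mathbf{c})=M(\mathbf{h})$ while $\|\mathbf{c}-\mathbf{h}\|_{W^{1,2}}$ stays bounded away from $0$, and perturbing $\mathbf{c}$ transversally off $\mathbb{H}$ makes $M(\mathbf{c})-M(\mathbf{h})$ only second order in that transversal perturbation, hence arbitrarily small compared with $\delta^2\ge s$. The second term $(1-\kappa)(H(\mathbf{c},s)-M(\mathbf{c}))$ cannot systematically rescue this: by the very Taylor estimate you invoke it is itself at most $\tfrac{C_0}{2}s^2\int|\mathbf{c}|^2$, and it degenerates further when $F''(0)=0$, so it does not dominate the error $Cs^2$ with a fixed margin. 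The net effect is that the contraction constant $\kappa$ your argument produces tends to $0$ as $\mathbf{c}$ approaches $\mathbb{H}$, rather than being uniform. Closing the gap would require decomposing $\mathbf{c}-\mathbf{h}$ into degenerate and non-degenerate modes, choosing $\mathbf{h}$ to be the \emph{nearest} half-space solution, and quantifying the gain of the \cite{asu} competitor against the non-degenerate component — at which point you are essentially re-deriving, not citing, the linear epiperimetric inequality, which is exactly what the paper's contradiction argument does, and Claim~3 of that argument is precisely the step that kills the degenerate rotation modes. Your closing paragraph gestures at ``refining the competitor in the transition region,'' but the obstruction is global (the rotational degeneracy in the bulk), not localized at $\partial\{|\mathbf{h}|>0\}$.
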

	\begin{remark}
The epiperimetric inequality can be described as an energy contraction, i.e.,
\begin{equation*}
	H(\mathbf{v}, s) - H(\mathbf{h},0^+) \leq (1 - \kappa) \left(H(\mathbf{c}, s) - H(\mathbf{h},0^+)\right) \quad \text{for any} \quad s \in (0, \delta^2].
\end{equation*}
	\end{remark}
	\begin{remark}
		Then we take special function $\mathbf{c}$ such that $\mathbf{c}=\mathbf{u}$ on $\partial B_1$, then due to $\mathbf{v}=\mathbf{c}$ on $\partial B_1$ and $\mathbf{u}$ is a minimizer of energy, we can obtain that
		 $$H(\mathbf{v},s)\geq H(\mathbf{u},s).$$
		\end{remark}
		
		\begin{remark}
		The epiperimetric inequality is crucial for obtaining the energy decay. Through it, we derive the linear energy $\left(G(r):=W(\mathbf{u},x^0,r)-W(\mathbf{u},x^0,0+)\right)$ inequality,
			$$\frac{dG(r)}{dr}\geq\frac{C}{r}G(r).$$
		Subsequently, the energy decay easily follows.
		\end{remark}

	\begin{proof}
		We will provide the proof by contradiction. Assume that for any $ \kappa\in(0,1)$ and $ \delta>0,$ there exist $\mathbf{c}$ , a homogeneous function of degree 2 and some $\mathbf{h}\in\mathbb{H}$ satisfying $$\|\mathbf{c}-\mathbf{h}\|_{W^{1,2}(B_1;\mathbb{R}^m)}+ \|\mathbf{c}-\mathbf{h}\|_{L^{\infty}(B_1;\mathbb{R}^m)}\leq \delta,$$ such  that there is $s\in (0, \delta^2]$  satisfying
		\begin{equation}
			H(\mathbf{v},s)> (1-\kappa) H(\mathbf{c},s)+\kappa M(\mathbf{h}) \quad \text{for any}\ \mathbf{v}\in \mathbf{c}+W^{1,2}_0(B_1;\mathbb{R}^m).\nonumber
		\end{equation}
		Without loss of generality, one can assume that there exist sequences $\kappa_k\rightarrow 0$, $\delta_k\rightarrow 0$, $\mathbf{c}_k\in W^{1,2}(B_1;\mathbb{R}^m)$ and $\mathbf{h}_k\in \mathbb{H}$ such that $\mathbf{c}_k$ is a homogeneous global function of degree 2 satisfying
		$$\|\mathbf{c}_k-\mathbf{h}_k\|_{W^{1,2}(B_1;\mathbb{R}^m)}=\inf_{h\in\mathbf{H}}\|\mathbf{c}_k-\mathbf{h}\|_{W^{1,2}(B_1;\mathbb{R}^m)}=\delta_k; \quad \|\mathbf{c}_k-\mathbf{h}_k\|_{L^{\infty}(B_1;\mathbb{R}^m)}\rightarrow 0 \quad \text{as} \quad k\rightarrow\infty,$$ and
		\begin{equation}\label{eq5.2}
			H(\mathbf{v},{s_k})> (1-\kappa_k) H(\mathbf{c}_k,{s_k})+\kappa_k M(\mathbf{h}_k) \quad \text{for}\  \quad \mathbf{v}\in \mathbf{c}_k+W^{1,2}_0(B_1;\mathbb{R}^m),
		\end{equation} where $0<s_k<\delta_k^2$.
		
		For simplicity, rotating in $\mathbb{R}^n$ and $\mathbb{R}^m$ if necessary, we may assume that
		$$\mathbf{h}_k =\frac{f(0)\max (x_n,0)^2}{2}\mathbf{e}^1=:\mathbf{h},$$
		where $\mathbf{e}^1=(1,0,...,0)\in \mathbb{R}^m$. Hence, $\displaystyle\chi_{\{|\mathbf{h}|>0\}}=\chi_{\{x_n>0\}}$.

		From \eqref{eq5.2}, it is easy to check that
		\begin{equation}\label{eq5.3}
			(1-\kappa_k) \big(H(\mathbf{c}_k,{s_k})-M(\mathbf{h})\big)<H(\mathbf{v},{s_k})-M(\mathbf{h}),
		\end{equation}
		for every $\mathbf{v}\in W^{1,2}(B_1;\mathbb{R}^m)$ and $\mathbf{v}=\mathbf{c}_k$ on $\partial B_1$.
		
		Since $\displaystyle\Delta \mathbf{h}=\frac{f(0)\mathbf{h}}{|\mathbf{h}|}\chi_{|\mathbf{h}|>0},$ one gets
		\begin{equation}
			\int_{B_1}-\nabla \mathbf{h}\cdot\nabla\boldsymbol{\phi} \ dx+\int_{\partial B_1}\Big(\nabla\mathbf{h}\cdot \nu  \cdot\boldsymbol{\phi}-\frac{F'(0)\mathbf{h}\cdot \boldsymbol{\phi}}{2|\mathbf{h}|}\chi_{|\mathbf{h}|>0} \Big)\ d\mathcal{H}^{n-1}=0, \quad \text{for any }\quad  \boldsymbol{\phi}\in W^{1,2}(B_1;\mathbb{R}^m).\nonumber
		\end{equation} For simplicity, we denote
		\begin{equation*}
			\mathnormal{I}(\mathbf{z}):= 2\int_{B_1}\nabla\mathbf{h}\cdot \nabla(\mathbf{z}-\mathbf{h})\ dx-2\int_{\partial B_1}2\mathbf{h}\cdot(\mathbf{z}-\mathbf{h}) \ d\mathcal{H}^{n-1} +\int_{B_1}F'(0)\mathbf{e}^1\chi_{\{x_n >0\}}(\mathbf{z}-\mathbf{h})\  dx,
		\end{equation*}
		for any $\mathbf{z}\in W^{1,2}(B_1;\mathbb{R}^m)$.
		In view of the homogeneity of $\mathbf{h}$, one obtains that $I(\mathbf{z})\equiv 0$ for any $\mathbf{z}\in W^{1,2}(B_1;\mathbb{R}^m)$,
		which together with the inequality \eqref{eq5.3}, gives
		\begin{equation}
			(1-\kappa_k)\left[H(\mathbf{c}_k,{s_k})-M(\mathbf{h})-\mathnormal{I}(\mathbf{c}_k)\right]<H(\mathbf{v},{s_k})-M(\mathbf{h})-\mathnormal{I}(\mathbf{v}).\nonumber
		\end{equation}
		Thus,
		\begin{equation*}
			\begin{aligned}
				&(1-\kappa_k)\Bigg[\int_{B_1}\big(|\nabla\mathbf{c}_k|^2+\frac{1}{s_k^2}F(s_k^2|\mathbf{c}_k|)\big)\ dx-2\int_{\partial B_1}|\mathbf{c}_k|^2 d\mathcal{H}^{n-1}-\int_{B_1}\left(|\nabla \mathbf{h}|^2+F'(0)|\mathbf{h}|\right)dx \\
				&+2\int_{\partial B_1}|\mathbf{h}|^2 d\mathcal{H}^{n-1}-\int_{B_1} \big(2\nabla\mathbf{h}\cdot \nabla(\mathbf{c}_k-\mathbf{h})+{F'(0)}\chi_{\{x_n>0\}}\mathbf{e}\cdot (\mathbf{c}_k-\mathbf{h})\big) dx\Bigg] +2\int_{\partial B_1}2\mathbf{h}\cdot(\mathbf{c}_k-\mathbf{h}) d\mathcal{H}^{n-1} \\
				<&\int_{B_1}\big(|\nabla\mathbf{v}|^2+\frac{1}{s_k^2}F(s_k^2|\mathbf{v}|)\big)dx -2\int_{\partial B_1}|\mathbf{v}|^2 d\mathcal{H}^{n-1}- \int_{B_1}\big(|\nabla \mathbf{h}|^2+F'(0)|\mathbf{h}|\big)dx +2\int_{\partial B_1}|\mathbf{h}|^2 d\mathcal{H}^{n-1}\\
				&-\int_{B_1}\big(2\nabla\mathbf{h}\cdot\nabla(\mathbf{v}-\mathbf{h})+{F'(0)}\chi_{\{x_n>0\}}\mathbf{e}(\mathbf{v}-\mathbf{h})\big)dx+2\int_{\partial B_1}2\mathbf{h}\cdot(\mathbf{v}-\mathbf{h}) d\mathcal{H}^{n-1}.
			\end{aligned}
		\end{equation*}
		This yields that
		\begin{equation}\label{eq5.4}
			\begin{aligned}
				&(1-\kappa_k)\left[ \int_{B_1}|\nabla(\mathbf{c}_k-\mathbf{h})|^2 \ dx-2\int_{\partial B_1}|\mathbf{c}_k-\mathbf{h}|^2\  d\mathcal{H}^{n-1} +\int_{B_1}\frac{1}{s_k^2}F(s_k^2|\mathbf{c}_k|)\ dx-\int_{B_1^+}F'(0)\mathbf{e}\cdot\mathbf{c}_k \ dx \right] \\
				<&\int_{B_1}|\nabla(\mathbf{v}-\mathbf{h})|^2\ dx-2\int_{\partial B_1}|\mathbf{v}-\mathbf{h}|^2 \ d\mathcal{H}^{n-1}+\int_{B_1}\frac{1}{s_k^2}F(s_k^2|\mathbf{v}|)dx-\int_{B_1^+}F'(0)\mathbf{e}\cdot\mathbf{v} dx.
			\end{aligned}
		\end{equation}
		where $B_1^+:=\{x\in B_1(0): x_n>0\}$.
		
		Let $\mathbf{w}_k:=(\mathbf{c}_k-\mathbf{h})/{\delta_k}$, then $\|\mathbf{w}_k\|_{W^{1,2}(B_1,\mathbb{R}^m)}=1$. Next, it suffices to show that
		$\mathbf{w}_k\rightarrow \mathbf{w}$ strongly in $W^{1,2}(B_1;\mathbb{R}^m)$ and $\mathbf{w}= 0$ in $B_1(0)$, which leads to a contraction.
		
		Since $\|\mathbf{w}_k\|_{W^{1,2}(B_1;\mathbb{R}^m)}=1$, i.e. $\mathbf{w}_k$ is bounded in $W^{1,2}(B_1;\mathbb{R}^m)$, then there is a weakly convergent subsequence, still denoted by $\mathbf{w}_k$, that is, $\mathbf{w}_k\rightharpoonup\mathbf{w}$ in $W^{1,2}(B_1;\mathbb{R}^m)$ for some $\mathbf{w}$ in $W^{1,2}(B_1;\mathbb{R}^m)$. Therefore, we will prove $\mathbf{w}_k\to\mathbf{w}$ in $W^{1,2}(B_1;\mathbb{R}^m)$, and $\mathbf{w}\equiv 0$ in $B_1(0)$. Our proof consists of the establishment of the following four claims.

		Claim 1: $\mathbf{w}\equiv 0$ in $B_1^-$ and $\displaystyle\int_{B_1^+}(|\mathbf{c}_k|-\mathbf{e}\mathbf{c}_k)\  dx\leq C\delta_k^2$, where $B_1^-:=\{x\in B_1(0): x_n<0\}$.
		
		To see this, let $\mathbf{v}:=(1-\xi)\mathbf{c}_k+\xi\mathbf{h}$ in \eqref{eq5.4}, where $\xi(x)=\xi(|x|) \in C_0^\infty(B_1)$, $0\leq\xi \leq 1$ on $B_1$ and $\displaystyle \int_{B_1} \xi dx=1$. Since $\displaystyle\frac{\mathbf{v}-\mathbf{h}}{\delta_k}=(1-\xi)(\displaystyle\frac{\mathbf{c}_k-\mathbf{h}}{\delta_k})=(1-\xi)\mathbf{w}_k,$ then the inequality \eqref{eq5.4} implies
		\begin{equation}\label{eq5.5}
			\begin{aligned}
				&(1-\kappa_k)\Bigg[\int_{B_1}|\nabla\mathbf{w}_k|^2 dx-2\int_{\partial B_1}|\mathbf{w}_k|^2 d\mathcal{H}^{n-1}+\int_{B_1}\frac{1}{s_k^2\delta_k^2}F(s_k^2|\mathbf{c}_k|)dx-\frac{1}{\delta_k^2}\int_{B_1^+}F'(0)\mathbf{e}\cdot\mathbf{c}_k dx\Bigg]\\
				<&\int_{B_1}|\nabla((1-\xi)\mathbf{w}_k)|^2\ dx-2\int_{\partial B_1}|(1-\xi)\mathbf{w}_k|^2 d\mathcal{H}^{n-1}+\int_{B_1}\frac{1}{s_k^2\delta_k^2}F(s_k^2|\mathbf{v}|)dx-\frac{1}{\delta_k^2}\int_{B_1^+}F'(0)\mathbf{e}\cdot\mathbf{v} \ dx,
			\end{aligned}
		\end{equation}
		and  \begin{eqnarray*}
			&&\int_{B_1}|\nabla((1-\xi)\mathbf{w}_k)|^2 dx-2\int_{\partial B_1}|(1-\xi)\mathbf{w}_k|^2 \ d\mathcal{H}^{n-1}\\
			&&-(1-\kappa_k)\left(\int_{B_1}|\nabla\mathbf{w}_k|^2\ dx-2\int_{\partial B_1}|\mathbf{w}_k|^2 d\mathcal{H}^{n-1} \right)\\
			&\leq &\int_{B_1}|\nabla((1-\xi)\mathbf{w}_k)|^2 dx+2(1-\kappa_k)\int_{\partial B_1}|\mathbf{w}_k|^2d\mathcal{H}^{n-1}\\
			&\leq&C_1,
		\end{eqnarray*}
		where $C_1$ is a positive uniform constant, due to $\|\mathbf{w}_k\|_{ {W^{1,2}}(B_1;\mathbb{R}^m)}=1$.
		Then one can see that
		\begin{equation}\label{eq5.6}
			\begin{aligned}
				&\frac{1}{s_k^2\delta_k^2}\int_{B_1^-}(1-\kappa_k) F(s_k^2|\mathbf{c}_k|)-F(s_k^2|\mathbf{v}|)\ dx \\
				&+\frac{1}{\delta_k^2}\int_{B_1^+}(1-\kappa_k)\left(\frac{1}{s_k} F(s_k^2|\mathbf{c}_k|)-F'(0)\mathbf{e}\mathbf{c}_k\right)
				-\left(\frac{1}{s_k^2}F(s_k^2|\mathbf{v}|)-F'(0)\mathbf{e}\mathbf{v}\right) \ dx\\
				&\leq \ C_1.
			\end{aligned}
		\end{equation}
		
		We denote the two terms of the left side of the inequality above by $I$ and $II$, respectively. Then the convexity of $F$, taking $\mathbf{v}:=(1-\xi)\mathbf{c}_k$ in $B_1^-$ and $\mathbf{v}:=(1-\xi)\mathbf{c}_k+\xi\mathbf{h}$ in $B_1^+$ give that
		\begin{equation*}
			\begin{aligned}
				I:=&\frac{1}{s_k^2\delta_k^2}\int_{B_1^-}(1-\kappa_k) F(s_k^2|\mathbf{c}_k|)-F(s_k^2|\mathbf{v}|)\ dx\\
				\geq&\frac{1}{s_k^2\delta_k^2}\int_{B_1^-}F'(s_k^2|\mathbf{v}|)(s_k^2|\mathbf{c}_k|-s_k^2|\mathbf{v}|)-\kappa_k F'(s_k^2|\mathbf{c}_k|)s_k^2|\mathbf{c}_k| \ dx\\
				=&\frac{1}{ \delta_k^2}\int_{B_1^-}F'(s_k^2|\mathbf{v}|)( |\mathbf{c}_k|- |(1-\xi)\mathbf{c}_k|)-\kappa_k F'(s_k^2|\mathbf{c}_k|) |\mathbf{c}_k| \ dx \\
				\geq&\frac{1}{ \delta_k^2}\int_{B_1^-}\xi F'(0) |\mathbf{c}_k| -\kappa_k F'(s_k^2|\mathbf{c}_k|) |\mathbf{c}_k|\ dx.
			\end{aligned}
		\end{equation*}
		
		As for the term $II$, we obtain
		\begin{equation*}
			\begin{aligned}
				II:=&\frac{1}{\delta_k^2}\int_{B_1^+}(1-\kappa_k)\left(\frac{1}{s_k^2} F(s_k^2|\mathbf{c}_k|)-F'(0)\mathbf{e}\mathbf{c}_k\right)
				-\left(\frac{1}{s_k^2}F(s_k^2|\mathbf{v}|)-F'(0)\mathbf{e}\mathbf{v}\right) \ dx \\
				\geq& \frac{1}{\delta_k^2}\int_{B_1^+}F'(s_k^2|\mathbf{v}|)(|\mathbf{c}_k|-|\mathbf{v}|)-\kappa_k\frac{1}{s_k^2} F(s_k^2|\mathbf{c}_k|)-\left((1-\kappa_k)F'(0)\mathbf{e}\mathbf{c}_k-F'(0)\mathbf{e}\mathbf{v}\right) \ dx \\
				\geq& \frac{1}{\delta_k^2}\int_{B_1^+}F'(s_k^2|\mathbf{v}|)\xi(|\mathbf{c}_k|-|\mathbf{h}|)-\kappa_k\frac{1}{s_k^2} F(s_k^2|\mathbf{c}_k|)-F'(0)\mathbf{e}((\xi-\kappa_k)\mathbf{c}_k-\xi\mathbf{h}) \ dx\\
				\geq& \frac{1}{\delta_k^2}\int_{B_1^+}F'(s_k^2|\mathbf{v}|)\xi(|\mathbf{c}_k|-|\mathbf{h}|)-\kappa_k F'(s_k^2|\mathbf{c}_k|)|\mathbf{c}_k|-F'(0)\mathbf{e}((\xi-\kappa_k)\mathbf{c}_k-\xi\mathbf{h})\ dx.
			\end{aligned}
		\end{equation*}

		Therefore, applying the two estimates for the terms $I$ and $II$ to \eqref{eq5.6}, we get
		\begin{equation}\label{eq5.7}
			\begin{aligned}
				&\frac{1}{ \delta_k^2}\int_{B_1^-}\left(\xi F'(0)  -\kappa_k F'(s_k^2|\mathbf{c}_k|) \right)|\mathbf{c}_k| \ dx+ \frac{1}{\delta_k^2}\int_{B_1^+}\left(F'(s_k^2|\mathbf{v}|)\xi-\kappa_k F'(s_k^2|\mathbf{c}_k|)\right)|\mathbf{c}_k| -F'(0)(\xi-\kappa_k)\mathbf{e}\cdot\mathbf{c}_k \ dx\\
				\leq& \ C_1+ \frac{1}{\delta_k^2}\int_{B_1^+}F'(s_k^2|\mathbf{v}|)\xi|\mathbf{h}|-F'(0)\xi\mathbf{e}\cdot\mathbf{h} \ dx.
			\end{aligned}
		\end{equation}
		Next one shall give a lower bound for the left side of \eqref{eq5.7}. The homogeneity of $\mathbf{c}_k$ implies that, for large $k$, $\kappa_k\to 0$ and
		\begin{equation}\label{eq5.8}
			\begin{aligned}
				&\int_{B_1^-}\left(\xi F'(0)  -\kappa_k F'(s_k^2|\mathbf{c}_k|) \right)|\mathbf{c}_k| \ dx \\
				=&\int_0^1\int_{\partial B_1 \cap \{x_n<0\}}\left(\xi(\rho)F'(0)-\kappa_k F'(s_k^2\rho^2|\mathbf{c_k(\theta)}|)\right)\rho^2 \rho^{n-1}|\mathbf{c}_k(\theta)|d\mathcal{H}^{n-1} d\rho\\
				\geq&\int_0^1\int_{\partial B_1 \cap \{x_n<0\}}\left(\xi(\rho)F'(0)-\kappa_k F'(s_k^2|\mathbf{c_k(\theta)}|)\right)\rho^{n+1}|\mathbf{c}_k(\theta)|d\mathcal{H}^{n-1} d\rho\\
				\geq&\int_0^1\left(\xi(\rho)F'(0)- \frac{1}{2}\right)\rho^{n+1} d\rho \int_{\partial B_1 \cap \{x_n<0\}}|\mathbf{c}_k(\theta)|d\mathcal{H}^{n-1}\\
				\geq&c_1\int_{\partial B_1 \cap \{x_n<0\}} |\mathbf{c}_k|d\mathcal{H}^{n-1},
			\end{aligned}
		\end{equation}
		where $c_1>0$ depends only on $c_0$, $\xi$ and $n$.
		
		On the other hand, for the term
		$$\int_{B_1^+}\left(F'(s_k^2|\mathbf{v}|)\xi-\kappa_k F'(s_k^2|\mathbf{c}_k|)\right)|\mathbf{c}_k|-F'(0)(\xi-\kappa_k)\mathbf{e}\cdot\mathbf{c}_k\ dx,$$
		it is easy to see that
		\begin{equation}\label{eq5.9}
			\begin{aligned}
				&\int_{B_1^+}\left(F'(0)\xi-\kappa_k F'(0)\right)|\mathbf{c}_k|-F'(0)(\xi-\kappa_k)\mathbf{e}\cdot\mathbf{c}_k \ dx\\
				=&\int_{B_1^+}F'(0)\left(\xi-\kappa_k \right)\left(|\mathbf{c}_k|-\mathbf{e}\cdot\mathbf{c}_k\right) \ dx\\
				=&\int_0^1\int_{\partial B_1 \cap \{x_n>0\}}F'(0)\left(\xi(\rho)-\kappa_k \right)\left(\rho^2|\mathbf{c}_k(\theta)|-\rho^2\mathbf{e}\cdot\mathbf{c}_k(\theta)\right)\rho^{n-1}d\mathcal{H}^{n-1} d\rho\\
				=&\int_0^1F'(0)\left(\xi(\rho)-\kappa_k \right)\rho^{n+1}d\rho \int_{\partial B_1 \cap \{x_n>0\}}\left(|\mathbf{c}_k(\theta)|-\mathbf{e}\cdot\mathbf{c}_k(\theta)\right)d\mathcal{H}^{n-1}\\
				\geq&c_2\int_{\partial B_1 \cap \{x_n>0\}}\left(|\mathbf{c}_k(\theta)|-\mathbf{e}\cdot\mathbf{c}_k(\theta)\right)d\mathcal{H}^{n-1},
			\end{aligned}
		\end{equation}
		where $c_2>0$ depends only on $\xi$ and $n$.
		Therefore, the inequalities \eqref{eq5.7}-\eqref{eq5.9} lead to that
		\begin{equation*}
			\begin{aligned}
				&\frac{c_1}{\delta_k^2}\int_{\partial B_1 \cap \{x_n<0\}} |\mathbf{c}_k|d\mathcal{H}^{n-1}+\frac{c_2}{\delta_k^2}\int_{\partial B_1 \cap \{x_n>0\}}\left(|\mathbf{c}_k(\theta)|-\mathbf{e}\cdot\mathbf{c}_k(\theta)\right)d\mathcal{H}^{n-1}\\
				\leq& C_1+ \frac{1}{\delta_k^2}\int_{B_1^+}\big(F'(s_k^2|\mathbf{v}|)|\mathbf{h}|-F'(0)\mathbf{e}\cdot\mathbf{h}\big)\xi \ dx\\
				=& C_1+ \frac{1}{\delta_k^2}\int_{B_1^+}F''(0)s_k^2|\mathbf{v}||\mathbf{h}|\xi+o(s_k^2) \ dx \quad \text{as} \quad k\to \infty.
			\end{aligned}
		\end{equation*}
		This shows
		\begin{equation*}
			\int_{B_1^-}|\mathbf{c}_k|\ dx \leq C_2\delta_k^2\quad \text {and} \quad \int_{B_1^+}(|\mathbf{c}_k|-\mathbf{e^1\cdot\mathbf{c}_k})\ dx\leq C_2\delta_k^2.
		\end{equation*} In particular,
		\begin{equation*}
			\int_{B_1^-}|\mathbf{w}_k| \ dx= \int_{B_1^-}\left|\frac{\mathbf{c}_k-\mathbf{h}}{\delta_k}\right| \ dx= \int_{B_1^-}\left|\frac{\mathbf{c}_k}{\delta_k}\right| \ dx=\frac{1}{\delta_k}\int_{B_1^-}|\mathbf{c}_k| \ dx \leq C_2\delta_k,
		\end{equation*}
		i.e. $\mathbf{w}\equiv0$ in $B_1^-$. Therefore Claim 1 holds true.\\

		Claim 2: $\Delta(\mathbf{e}\cdot\mathbf{w})=0$ in $B_1^+$, $\mathbf{e^j}\cdot\mathbf{w}=d_jh$ in $B_1^+(0)$ for each $j>1$ and some constant $d_j$, where $h:=\max(x_n,0)^2/2$, namely,  $\mathbf{h}=h\cdot\mathbf{e}^1$.
		
		To prove it, one can fix a ball $B_0\Subset B_1^+(0)$, and let $\mathbf{v}:=(1-\eta)\mathbf{c}_k+ \eta(\mathbf{h}+\delta_k\mathbf{g})$ in \eqref{eq5.4}, where $\eta\in C_0^{\infty}(B_1^+)$ and $\mathbf{g}\in W^{1,2}(B_1;\mathbb{R}^m)$ such that $\eta=1$ in $B_0$, $\eta=0$ in $B_1^-$, $0\leq\eta\leq1$ in $B_1^+\setminus B_0$.  Then rescaling the inequality \eqref{eq5.4}, we arrive at
		\begin{equation*}
			\begin{aligned}
				&(1-\kappa_k)\Big(\int_{B_1}|\nabla\mathbf{w}_k|^2\ dx -2\int_{\partial B_1}|\mathbf{w}_k|^2 \ d \mathcal{H}^{n-1}+\int_{B_1}\frac{1}{s_k^2\delta_k^2}F(s_k^2 |\mathbf{c}_k|) dx -\frac{1}{\delta_k^2}\int_{B_1^+}F'(0)\mathbf{e}\cdot\mathbf{c}_k \ dx\Big)\\
				<&\frac{1}{\delta_k^2}\int_{B_1}|\nabla(\mathbf{v}-\mathbf{h})|^2\ dx-\frac{2}{\delta_k^2}\int_{\partial B_1}|\mathbf{v}-\mathbf{h}|^2\ d \mathcal{H}^{n-1} +\int_{B_1}\frac{1}{s_k^2\delta_k^2}F(s_k^2|\mathbf{v}|)dx-\frac{1}{\delta_k^2}\int_{B_1^+}F'(0)\mathbf{e}\cdot\mathbf{v} dx \\
				< &\int_{B_1}|\nabla((1-\eta)\mathbf{w}_k+\eta\mathbf{g})|^2 \ dx -2\int_{\partial B_1}|(1-\eta)\mathbf{w}_k+\eta\mathbf{g}|^2 \ d\mathcal{H}^{n-1}\\
				&+ \ \int_{B_1}\frac{1}{s_k^2\delta_k^2}F(s_k^2|(1-\eta)\mathbf{c}_k+\eta(\mathbf{h}+\delta_k\mathbf{g})|)\ dx-\frac{1}{\delta_k^2}\int_{B_1^+}F'(0)\mathbf{e}\cdot((1-\eta)\mathbf{c}_k+\eta(\mathbf{h}+\delta_k\mathbf{g}))\ dx,
			\end{aligned}
		\end{equation*}
		which implies
		\begin{equation*}
			\begin{aligned}
				&(2\eta-\eta^2)\int_{B_1^+}|\nabla\mathbf{w}_k|^2\ dx +\int_{B_1^+}\frac{1}{s_k^2\delta_k^2}\left(F(s_k^2|\mathbf{c}_k|)- s_k^2 F'(0)\mathbf{e}\cdot\mathbf{c}_k\right) dx \\
				\leq&\ o(1) +\int_{B_1^+}\eta^2|\nabla \mathbf{g}|^2\ dx+\int_{B_1^+\setminus B_0}\Big(|\nabla\eta|^2(\mathbf{w}_k-\mathbf{g})^2+(2\eta-2\eta^2)\nabla\mathbf{w}_k\cdot\nabla\mathbf{g}\Big)\ dx\\
				&+\int_{B_1^+\setminus B_0}\Big(2(1-\eta)\nabla\eta\cdot\nabla\mathbf{w}_k\cdot(\mathbf{g}-\mathbf{w}_k) +2\eta\nabla\eta\cdot \nabla\mathbf{g}\cdot(\mathbf{g}-\mathbf{w}_k)\Big)\ dx\\
				&+ \ \frac{1}{\delta_k^2}\int_{B_1^+}\left(\frac{1}{s_k^2}F(s_k^2|(1-\eta)\mathbf{c}_k+\eta(\mathbf{h}+\delta_k\mathbf{g})|-F'(0)\mathbf{e}\cdot ((1-\eta)\mathbf{c}_k+\eta(\mathbf{h}+\delta_k\mathbf{g}))\right) \ dx.
			\end{aligned}
		\end{equation*}
		Then the boundedness of $F''$ give that
		\begin{equation*}
			\begin{aligned}
				\int_{B_1^+}\frac{1}{s_k^2\delta_k^2}\left(F(s_k^2|\mathbf{c}_k|)- s_k^2 F'(0)\mathbf{e}\cdot\mathbf{c}_k\right) dx=\frac{1}{\delta_k^2}\int_{B_1^+}F'(0)\eta\left(|\mathbf{c}_k|-\mathbf{e}\cdot\mathbf{c}_k)\right) \ dx+\ o(1).
			\end{aligned}
		\end{equation*}
		Through the above equality, we obtain that
		\begin{equation}\label{5.10}
			\begin{aligned}
				&\int_{B_1^+}(2\eta-\eta^2)|\nabla\mathbf{w}_k|^2 \ dx+\ \frac{1}{\delta_k^2}\int_{B_1^+}F'(0)\eta\left(|\mathbf{c}_k|-\mathbf{e}\cdot\mathbf{c}_k)\right) \ dx \\
				\leq& \ o(1)+\int_{B_1^+} \Big(\eta^2|\nabla\mathbf{g}|^2 \ dx +\int_{B_1^+\setminus B_0}|\nabla\eta|^2(\mathbf{w}_k-\mathbf{g})^2+(2\eta-2\eta^2)\nabla\mathbf{w}_k\cdot\nabla\mathbf{g} \Big)\ dx\\
				&+\ \int_{B_1^+\setminus B_0}\Big(2(1-\eta)\nabla\eta\cdot\nabla\mathbf{w}_k\cdot(\mathbf{g}-\mathbf{w}_k) +2\eta\nabla\eta\cdot \nabla\mathbf{g}\cdot(\mathbf{g}-\mathbf{w}_k)\Big) \ dx\\
				&+\ \frac{1}{\delta_k^2}\int_{B_1^+}F'(0)\eta \left(|(\mathbf{h}+\delta_k\mathbf{g})|- \mathbf{e}\cdot(\mathbf{h}+\delta_k\mathbf{g})\right)\ dx.
			\end{aligned}
		\end{equation}
		
		From the direct computation, it implies that
		\begin{equation}\label{eq5.11}
			|\mathbf{h}+\delta_k\mathbf{g}|-\mathbf{e}\cdot(\mathbf{h}+\delta_k\mathbf{g})=\frac{1}{2}\delta_k^2\frac{|\mathbf{g}|^2-(\mathbf{e}\mathbf{g})^2}{(h+\delta_k\mathbf{e}\mathbf{g})^2}+o(\delta_k^2),
		\end{equation}
		and
		\begin{equation}\label{eq5.12}
			|\mathbf{c}_k|-\mathbf{e}\cdot\mathbf{c}_k=\frac{1}{2}\delta_k^2\frac{|\mathbf{w}_k|^2-(\mathbf{e}\mathbf{w}_k)^2}{(h+\delta_k\mathbf{e}\mathbf{w}_k)^2}+o(\delta_k^2).
		\end{equation}

		Let $\mathbf{g}$ be any function in $W^{1,2}(B_1;\mathbb{R}^m)$ satisfying $\mathbf{g}=\mathbf{w}$ in $B_1\setminus B_0$. Inserting \eqref{eq5.11} and \eqref{eq5.12} into (\ref{5.10}) and taking $k\rightarrow\infty$, we deduce
		\begin{equation}\label{5.12}
			\begin{aligned}
				&\int_{B_0}|\nabla\mathbf{w}|^2dx+\int_{B_0}F'(0)h\left(\frac{1}{2}\frac{|\mathbf{w}|^2-(\mathbf{e}\mathbf{w})^2}{h^2}\right)dx\\
				\leq& \int_{B_0}|\nabla\mathbf{g}|^2\ dx+ \int_{B_0}F'(0)h\left(\frac{1}{2}\frac{|\mathbf{g}|^2-(\mathbf{e}\mathbf{g})^2}{h^2}\right)\ dx.
			\end{aligned}
		\end{equation}
		In \eqref{5.12}, for $\mathbf{w}=(w_1,w_2,w_3,...,w_m)$, one may take $\mathbf{g}=(g_1,w_2,w_3,...,w_m)$ with $g_1\in W^{1,2}(B_1)$ and $g_1=w_1$ in $B_1\setminus B_0$, then it infers
		$$\int_{B_0}|\nabla w_1|^2 dx\leq \int_{B_0}|\nabla g_1|^2 dx$$
		i.e. $\Delta(\mathbf{e}\cdot\mathbf{w})=0$.  In addition one also can take $\mathbf{g}=(w_1,w_2,,...g_j...,w_m)$ for any $2\leq j\leq m$ with $g_j=w_j$ in $B_1\setminus {B_0}$,  then one obtains
		$$\int_{B_0}|\nabla w_j|^2 dx+\int_{B_0}\frac{F'(0)}{2h}w_j^2 dx \leq \int_{B_0}|\nabla g_j|^2 dx +\int_{B_0} \frac{F'(0)}{2h}g_j^2 dx,$$
		which shows $\Delta(\mathbf{e^j\cdot\mathbf{w}})=\displaystyle\frac{F'(0)}{2h}\mathbf{e^j}\cdot\mathbf{w}$ in $B_0$ for $j>1$ for real number $d_j$.
		According to Lemma \ref{laplace}, it follows that $\mathbf{e^j}\cdot\mathbf{w}(x)=d_jh(x)$ for some real number $d_j$.

		Claim 3: $\mathbf{e}\cdot \mathbf{w}=0$ in $B_1(0)$ and $d_j=0$ for each $j\geq2$.
		
		For the proof of Claim 3, the process is essentially same as that of \cite[Theorem 1]{asu}. We will briefly describe the proof process.
		
		Based on that $\mathbf{e}^1\mathbf{w}$ is homogeneous harmonic function of degree 2 in $B_1^+$ and $\mathbf{e}^1\mathbf{w}=0$ in $B_1^-$. Firstly, we can obtain $\mathbf{e}\cdot \mathbf{w}=\sum_{j=1}^{n-1}a_{nj}x_jx_n$ in $B_1$ through odd reflection and {\it Liouville Theorem}. Then recalling the selection of $\mathbf{h}$ which is the minimizer of $\inf_{h\in\mathbf{H}}\|\mathbf{c}_k-\mathbf{h}\|_{W^{1,2}(B_1;\mathbb{R}^m)}$, it gives that $\mathbf{e}^1\mathbf{w}=0$ in $B_1$.
		
		Secondly, Claim 2 has showed $\mathbf{e^j}\cdot\mathbf{w}=d_jh$ in $B_1^+(0)$ for each $j>1$ and some constant $d_j$, thus we can denote $\mathbf{w}_k=\mathbf{d}h+\mathbf{z}_k$, where $\mathbf{d}=(d_1,d_2,...,d_m)$, $\mathbf{d}\mathbf{e}^1=0$ and $\mathbf{z}_k\to 0$ weakly in $W^{1,2}(B_1;\mathbb{R}^m)$ as $k\to\infty$. From the assumption
		$$1=\|\mathbf{w}_k\|^2_{W^{1,2}(B_1;\mathbb{R}^m)}=\|\mathbf{d}h+\mathbf{z}_k\|^2_{W^{1,2}(B_1;\mathbb{R}^m)},$$
		it follows that $|\mathbf{d}|=0$, Therefore, $d_j=0$ for each $j\geq2$.
		
		Claim 4: $\mathbf{w}_k\to\mathbf{w}$ strongly in $W^{1,2}(B_1;\mathbb{R}^m)$.
		
		To see this, let $\mathbf{v}=(1-\zeta)\mathbf{c}_k+\zeta\mathbf{h}$ with
		\begin{align*}
			\begin{split}
				\zeta(x)=\zeta(|x|)=\left\{
				\begin{array}{lr}
					0,                 &|x|\geq  1\\
					2-2|x|,                 &1/2<|x|\leq 1\\
					1,         &|x|\leq \frac{1}{2}\\
				\end{array}
				\right.
			\end{split}
		\end{align*}
		i.e. $\zeta(|x|)=\min\left(2\max(1-|x|,0),1\right)$.
		
		Observing $\displaystyle\frac{\mathbf{v}-\mathbf{h}}{\delta_k}=(1-\zeta)\mathbf{w}_k$,  then from the inequality \eqref{eq5.5}, one obtains
		\begin{equation*}
			\begin{aligned}
				&(1-\kappa_k)\Big(\int_{B_1}|\nabla\mathbf{w}_k|^2 dx -2\int_{\partial B_1}|\mathbf{w}_k|^2 d\mathcal{H}^{n-1}+\int_{B_1}\frac{1}{s_k^2\delta_k^2}F(s_k^2|\mathbf{c}_k|)dx-\frac{1}{\delta_k^2}\int_{B_1^+}F'(0)\mathbf{e}\cdot\mathbf{c}_k dx \Big)\\
				<&\int_{B_1}|\nabla((1-\zeta)\mathbf{w}_k)|^2 dx -2\int_{\partial B_1}|(1-\zeta)\mathbf{w}_k|^2 d\mathcal{H}^{n-1}+\int_{B_1}\frac{1}{s_k^2\delta_k^2}F(s_k^2|\mathbf{v}|)dx-\frac{1}{\delta_k^2}\int_{B_1^+}F'(0)\mathbf{e}\cdot\mathbf{v}dx\\
				=&\int_{B_1}(1-\zeta)^2|\nabla\mathbf{w}_k|^2+|\mathbf{w}_k|^2|\nabla\zeta|^2-2(1-\xi)\nabla\mathbf{w}_k\cdot\mathbf{w}_k\cdot\nabla \zeta \ dx-2\int_{\partial B_1}(1+\zeta^2-2\zeta)|\mathbf{w}_k|^2 d\mathcal{H}^{n-1}\\
				&+\int_{B_1}\frac{1}{s_k^2\delta_k^2}F(s_k^2|\mathbf{v}|)dx-\frac{1}{\delta_k^2}\int_{B_1^+}F'(0)\mathbf{e}\cdot\mathbf{v} dx.
			\end{aligned}
		\end{equation*}
		This gives
		\begin{equation*}
			\begin{aligned}
				&(2\zeta-\zeta^2)\int_{B_1}|\nabla\mathbf{w}_k|^2 dx +\int_{B_1^-}\frac{1}{s_k^2\delta_k^2}F(s_k^2|\mathbf{c}_k|)dx+\int_{B_1^+}\frac{1}{s_k^2\delta_k^2}F(s_k^2|\mathbf{c}_k|)-\frac{1}{\delta_k^2}F'(0)\mathbf{e}\cdot\mathbf{c}_k dx\\
				<&\kappa_k\left(\int_{B_1}|\nabla\mathbf{w}_k|^2 dx -2\int_{\partial B_1}|\mathbf{w}_k|^2 d\mathcal{H}^{n-1} +\int_{B_1}\frac{1}{s_k^2\delta_k^2}F(s_k^2|\mathbf{c}_k|)dx-\frac{1}{\delta_k^2}\int_{B_1^+}F'(0)\mathbf{e}\cdot\mathbf{c}_k dx \right)\\
				&+\int_{B_1}|\nabla\zeta|^2|\mathbf{w}_k|^2-2(1-\zeta)\nabla\mathbf{w}_k\cdot\mathbf{w}_k\cdot\nabla \zeta dx +\int_{B_1^-}\frac{1}{s_k^2\delta_k^2}F(s_k^2|\mathbf{v}|)dx\\
				&+\int_{B_1^+}\frac{1}{s_k^2\delta_k^2}F(s_k^2|\mathbf{v}|)-\frac{1}{\delta_k^2}F'(0)\mathbf{e}\cdot\mathbf{v} dx.
			\end{aligned}
		\end{equation*}
		Since $\mathbf{v}=(1-\xi)\mathbf{c}_k$ in $B_1^-$, then $|\mathbf{c}_k|>|\mathbf{v}| $ and it infers
		\begin{equation*}
			\int_{B_1^-}\frac{1}{s_k^2\delta_k^2}F(s_k^2|\mathbf{c}_k|)dx-\int_{B_1^-}\frac{1}{s_k^2\delta_k^2}F(s_k^2|\mathbf{v}|)dx>0.
		\end{equation*}
		Notice the fact \eqref{eq5.11}, one has
		\begin{equation*}
			\begin{aligned}
				&\int_{B_1^+}\frac{1}{s_k^2\delta_k^2}F(s_k^2|\mathbf{c}_k|)-\frac{1}{\delta_k^2}F'(0)\mathbf{e}\cdot\mathbf{c}_k dx \\
				=&\int_{B_1^+}F'(0)(h+\delta_k\mathbf{e}\mathbf{w}_k)\left(\frac{1}{2}\frac{|\mathbf{w}_k|^2-(\mathbf{e}\mathbf{w}_k)^2}{(h+\delta_k\mathbf{e}\mathbf{w}_k)^2}+o(1)\right)+o(1) dx,
			\end{aligned}
		\end{equation*}
		and
		\begin{equation*}
			\begin{aligned}
				&\int_{B_1^+}\frac{1}{s_k^2\delta_k^2}F(s_k^2|\mathbf{v}|)-\frac{1}{\delta_k^2}F'(0)\mathbf{e}\cdot\mathbf{v} dx \\
				=&\int_{B_1^+}F'(0)(h+(1-\zeta)\delta_k\mathbf{e}\mathbf{w}_k)\left(\frac{1}{2}\frac{(1-\zeta)^2|\mathbf{w}_k|^2-(1-\xi)(\mathbf{e}\mathbf{w}_k)^2}{(h+(1-\zeta)\delta_k\mathbf{e}\mathbf{w}_k)^2}+o(1)\right)+o(1) dx.
			\end{aligned}
		\end{equation*}
		
		Recalling that Claim 1-3, one has shown that as $k \to + \infty$, $\mathbf{w}_k$ converges to $\mathbf{0}$ weakly in $W^{1,2}(B_1;\mathbb{R}^m)$.
		Hence one knows
		\begin{equation*}
			\lim_{k\rightarrow\infty}\int_{B_{\frac{1}{2}}}|\nabla\mathbf{w}_k|^2 dx = 0,
		\end{equation*}
		and due to the homogeneity of $\mathbf{w}$,  it leads to
		\begin{equation*}
			\lim_{k\rightarrow\infty}\int_{B_{1}}|\nabla\mathbf{w}_k|^2 dx= 0.
		\end{equation*}
		Therefore, $\mathbf{w}_k\to\mathbf{0}$ strongly in $W^{1,2}(B_1;\mathbb{R}^m)$.
		This contradicts with the fact $\|\mathbf{w}_k\|_{W^{1,2}(B_1;\mathbb{R}^m)}=1$.

	\end{proof}

	\vspace{25pt}

	\section{An energy decay estimate and uniqueness of blow-up limit}
	
	In this section, we show that an energy decay estimate via the epiperimetric inequality and then based on the energy decay we obtain the uniqueness of blow-up limit.
	\begin{proposition}{(Energy decay and uniqueness of blow-up limit)}\label{uniqueness}
		Let $x^0\in \Gamma_0(\mathbf{u})$ and suppose that the epiperimetric inequality \eqref{6.1} holds true with $\kappa\in(0,1)$ and $r_0 \in (0,1)$, for each value of $\mathbf{c}$, taking $\mathbf{c}_r$ as follows,
		$$\mathbf{c}_r(x):=|x|^2\mathbf{u}_r(\frac{x}{|x|})=\frac{|x|^2}{r^2}\mathbf{u}(x^0+\frac{r}{|x|}x), \quad \text{for any}\quad 0<r\leq r_0<1.$$
		Assume that $\mathbf{u}_0$ denotes an arbitrary blow-up limit of $\mathbf{u}$ at $x^0$.
		Then there holds
		\begin{equation}\label{eq6.1}
			|W(\mathbf{u},x^0,r)-W(\mathbf{u},x^0,0+)|\leq |W(\mathbf{u},x^0,r_0)-W(\mathbf{u},x^0,0+)|\left(\frac{r}{r_0}\right)^{\frac{(n+2)\kappa}{1-\kappa}}, \quad \text{for $r\in(0,r_0)$},
		\end{equation}
		and there is a constant $C=C(n,\kappa)>0$ such that for $r\in(0,\frac{r_0}{2})$, we have
		\begin{equation}\label{eq6.2}
			\int_{\partial B_1(0)}\left|\frac{\mathbf{u}(x^0+rx)}{r^2}-\mathbf{u}_0(x)\right|d\mathcal{H}^{n-1}\leq C|W(\mathbf{u},x^0,r_0)-W(\mathbf{u},x^0,0+)|^{\frac{1}{2}}\left(\frac{r}{r_0}\right)^{\frac{(n+2)\kappa}{2(1-\kappa)}}.
		\end{equation}
	\end{proposition}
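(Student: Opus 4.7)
My plan is to derive a differential-inequality of the form $e'(r) \geq \gamma\,e(r)/r$ for the non-negative, non-decreasing function $e(r) := W(\mathbf{u},x^0,r) - W(\mathbf{u},x^0,0+)$, where $\gamma := (n+2)\kappa/(1-\kappa)$. Integrating this from $r$ to $r_0$ gives precisely \eqref{eq6.1}, and \eqref{eq6.2} will then follow from the first term of the Weiss-type monotonicity formula (Lemma~\ref{monotonicity}) via a standard telescoping argument.

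To produce the differential inequality, I first exploit the rescaling identity $W(\mathbf{u},x^0,r)=H(\mathbf{u}_r,r)$ and the fact that $\mathbf{u}_r$ minimizes $H(\cdot,r)$ among competitors with its own trace on $\partial B_1$ (this follows from the change of variables $y=x^0+rx$, which turns the minimization of $H(\cdot,r)$ with boundary data on $\partial B_1$ into the original minimization of $E$ on $B_r(x^0)$). Applying the epiperimetric inequality (Lemma~\ref{epiperimetric}) to $\mathbf{c}=\mathbf{c}_r$ and then using minimality of $\mathbf{u}_r$ yields
\begin{equation*}
H(\mathbf{u}_r,r) \leq (1-\kappa)\,H(\mathbf{c}_r,r) + \kappa\,M(\mathbf{h}),
\end{equation*}
for some $\mathbf{h}\in\mathbb{H}$ close to $\mathbf{c}_r$. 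Since the hypothesis forces such an $\mathbf{h}$ to exist for every $r\in(0,r_0]$, Remark~\ref{rem1.7} together with Corollary~\ref{coro4.5} identifies $M(\mathbf{h})=\alpha_n/2=W(\mathbf{u},x^0,0+)$; subtracting this common value gives $e(r)\leq(1-\kappa)\bigl(H(\mathbf{c}_r,r)-W(\mathbf{u},x^0,0+)\bigr)$.

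The heart of the argument is the comparison
\begin{equation*}
H(\mathbf{c}_r,r)-W(\mathbf{u},x^0,r) \leq \frac{r}{n+2}\,\frac{dW}{dr}(r),
\end{equation*}
which I obtain by differentiating $W(\mathbf{u},x^0,r)$ directly and re-arranging the identities used in the proof of Lemma~\ref{monotonicity}: the $2$-homogeneity of $\mathbf{c}_r$ lets one integrate its energy density in polar coordinates against $\rho^{n+1}$ and reduce to boundary integrals on $\partial B_1$, and these boundary integrals are precisely what appears in $r^{-(n+2)}\int_{\partial B_r(x^0)}(\cdot)$ after rescaling, producing the $1/(n+2)$ factor. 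Plugging this comparison into the previous paragraph's estimate yields $e(r)\leq(1-\kappa)(e(r)+\tfrac{r}{n+2}e'(r))$, i.e.\ $e'(r)\geq\gamma\,e(r)/r$, and integrating gives \eqref{eq6.1}.

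For \eqref{eq6.2}, a direct calculation shows $\partial_r\mathbf{u}_r(x)=r^{-2}\bigl(\nabla\mathbf{u}(x^0+rx)\cdot\nu-2\mathbf{u}(x^0+rx)/r\bigr)$ for $|x|=1$, so by Lemma~\ref{monotonicity} (dropping the non-negative $F$-term),
\begin{equation*}
\int_{\partial B_1}|\partial_r\mathbf{u}_r|^2\,d\mathcal{H}^{n-1} \leq \frac{1}{2r}\,e'(r).
\end{equation*}
Cauchy-Schwarz on the dyadic interval $[r_{k+1},r_k]:=[r/2^{k+1},r/2^k]$ gives $\int_{r_{k+1}}^{r_k}\|\partial_r\mathbf{u}_r\|_{L^1(\partial B_1)}\,dr \leq C\sqrt{e(r_k)-e(r_{k+1})}\leq C\sqrt{e(r_k)}$, and combining with the decay $e(r_k)\leq e(r_0)(r_k/r_0)^\gamma$ bounds the telescoping sum by $C\,e(r_0)^{1/2}(r/r_0)^{\gamma/2}\sum_{k\geq 0}2^{-k\gamma/2}$, which is finite and yields \eqref{eq6.2}. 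The main obstacle is the comparison in the third paragraph: in the linear case of \cite{asu} this step is essentially algebraic because $F(s)=2s$ scales exactly, whereas here the $F$-term does not scale under the dilation, so the mismatch must be controlled using the non-negative $F'(|\mathbf{u}|)|\mathbf{u}|\chi_{\{|\mathbf{u}|>0\}}-F(|\mathbf{u}|)$ contribution from Lemma~\ref{monotonicity} that convexity of $F$ supplies; this is exactly why Lemma~\ref{epiperimetric} was stated in terms of the $r$-dependent energy $H(\cdot,r)$ instead of a scale-invariant version.
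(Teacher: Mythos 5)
Your proposal is correct and follows essentially the same route as the paper: you derive the differential inequality $e'(r)\geq\tfrac{(n+2)\kappa}{(1-\kappa)r}\,e(r)$ by combining the epiperimetric inequality with the minimality of $\mathbf{u}_r$, the polar-coordinate comparison for the $2$-homogeneous competitor $\mathbf{c}_r$ (where the monotonicity of $r\mapsto\frac{1}{r^2}F(r^2 s)$, coming from the convexity of $F$, produces the $\frac{1}{n+2}$ factor), and the non-negative square term from Lemma~\ref{monotonicity}, then integrate and telescope dyadically for \eqref{eq6.2} exactly as the paper does. The only cosmetic difference is that you insert the epiperimetric inequality before the polar-coordinate estimate while the paper inserts it after, but the resulting chain of inequalities is the same.
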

	\begin{remark}
		\begin{align*}
			W(\mathbf{u},x^0,r)=H(\mathbf{u}_{r},r):=\int_{B_1}|\nabla \mathbf{u}_{r}|^2+\frac{1}{r^2}F(r^2 |\mathbf{u}_{r}|)dx-2\int_{\partial B_1}|\mathbf{u}_{r}|^2d \mathcal{H}^{n-1},
		\end{align*}
		we will employ the epiperimetric inequality, 
		$$H(\mathbf{c}_r,r)-W(\mathbf{u},x^0,0^+)\ \geq \  \frac{1}{1-\kappa}\left(H(\mathbf{v},r)-W(\mathbf{u},x^0,0^+)\right).$$
		This inequality will be used to derive a linear energy inequality, which, in turn, will be utilized to achieve energy decay.
	\end{remark}

	\begin{proof}
		Define $G(r):=W(\mathbf{u},x^0,r)-W(\mathbf{u},x^0,0+)$, then $$\displaystyle G(r)=r^{-n-2}\int_{B_r(x^0)}\left(|\nabla\mathbf{u}|^2+F(|\mathbf{u}|)\right)dx-2r^{-n-3}\int_{\partial B_r(x^0)}|\mathbf{u}|^2d\mathcal{H}^{n-1}-W(\mathbf{u},x_0,0^+),$$
		and a direct computation gives that
		\begin{equation*}
			\begin{aligned}
				G'(r)=&-\frac{n+2}{r}G(r)-\frac{n+2}{r}\frac{2}{r^{n+3}}\int_{\partial B_r(x^0)}|\mathbf{u}|^2d\mathcal{H}^{n-1}-\frac{n+2}{r}W(\mathbf{u},x^0,0^+)\\
				&+r^{-n-2}\int_{\partial B_r(x^0)}\left(|\nabla\mathbf{u}|^2+F(|\mathbf{u}|)\right)d\mathcal{H}^{n-1}\\
				&+8r^{-n-4}\int_{\partial B_r(x^0)}|\mathbf{u}|^2 d\mathcal{H}^{n-1}-2r^{-n-3}\int_{\partial B_r(x^0)}2\mathbf{u}\cdot\nabla\mathbf{u}\cdot\nu d\mathcal{H}^{n-1} \\
				=&r^{-1}\Big( \int_{\partial B_1}(|\nabla\mathbf{u}_r|^2+\frac{1}{r^2}F(r^2|\mathbf{u}_r|))-(2n-4)|\mathbf{u}_r|^2 -4\nabla\mathbf{u}_r\cdot\nu \cdot\mathbf{u}_rd\mathcal{H}^{n-1}\Big)\\
				& -\frac{n+2}{r}W(\mathbf{u},x^0,0^+)  -\frac{n+2}{r}G(r).
			\end{aligned}
		\end{equation*}
		Furthermore,
		\begin{equation}\label{eq6.3}
			\begin{aligned}
				G'(r)\geq&r^{-1}\Big( \int_{\partial B_1}(|\nabla_{\theta}\mathbf{u}_r|^2+\frac{1}{r^2}F(r^2|\mathbf{u}_r|))+4|\mathbf{u}_r|^2 -2(n+2)|\mathbf{u}_r|^2 \\
				&-(n+2)W(\mathbf{u},x^0,0^+) d\mathcal{H}^{n-1} \Big)-\frac{n+2}{r}G(r)\\
				\geq&r^{-1}\Big( \int_{\partial B_1}|\nabla_{\theta}\mathbf{c}_r|^2+|\nabla\mathbf{c}_r(x)\cdot\nu|^2+\frac{1}{r^2}F(r^2|\mathbf{c}_r|)-2(n+2)|\mathbf{c}_r|^2 d\mathcal{H}^{n-1}  \Big)\\
				&-\frac{n+2}{r}W(\mathbf{u},x^0,0^+) -\frac{n+2}{r} G(r).
			\end{aligned}
		\end{equation}
		Since $\displaystyle\frac{1}{r^2}F(r^2s)$ is an increasing function with respect to $r$ for any fixed $s > 0$, one can know that
		\begin{equation*}
			\begin{aligned}
				\int_{B_1}|\nabla\mathbf{c}_r(x)|^2+\frac{1}{r^2}F(r^2|\mathbf{c}_r(x)|) dx=&\int_0^1\int_{\partial B_{1}(0)}\rho^{n-1}\left(|\rho^2\nabla \mathbf{c}_r(y)|^2+\frac{1}{r^2}F(r^2\rho^2|\mathbf{c}_r(y)|)\right)d\mathcal{H}^{n-1}d\rho\\
				\leq&\int_0^1\int_{\partial B_{1}(0)}\rho^{n+1}\left(|\nabla \mathbf{c}_r(y)|^2+\frac{1}{\rho^2r^2}F(r^2\rho^2|\mathbf{c}_r(y)|)\right)d\mathcal{H}^{n-1}d\rho\\
				\leq&\int_0^1\int_{\partial B_{1}(0)}\rho^{n+1}\left(|\nabla \mathbf{c}_r(y)|^2+\frac{1}{r^2}F(r^2|\mathbf{c}_r(y)|)\right)d\mathcal{H}^{n-1}d\rho\\
				=&\frac{1}{n+2}\int_{\partial B_{1}(0)}\left(|\nabla \mathbf{c}_r|^2+\frac{1}{r^2}F(r^2|\mathbf{c}_r|)\right)d\mathcal{H}^{n-1}.
			\end{aligned}
		\end{equation*}which together with \eqref{eq6.3},  implies
		\begin{equation}\label{eq6.4}
			G'(r)\geq\frac{n+2}{r}\Big(H(\mathbf{c}_r,r)-W(\mathbf{u},x^0,0^+)-G(r)\Big).
		\end{equation}
		Notice the assumptions that the epiperimetric inequality holds true for each $\mathbf{c}_r(x)$, i.e. one can see that $$H(\mathbf{v},s)\leq(1-\kappa)H(\mathbf{c}_r,s)+\kappa W(\mathbf{u},x^0,0^+),$$ for any $s\in(0,r_0)$ and some $\mathbf{v}\in W^{1,2}(B_1;\mathbb{R})^m)$ with $\mathbf{v}=\mathbf{c}_r$ on $\partial B_1$. Hence, we take $s=r\in(0,r_0),$ into above inequality, then which together with \eqref{eq6.3}-\eqref{eq6.4} shows that
		\begin{eqnarray}\label{eq6.5}
			G'(r)&\geq&\frac{n+2}{r}\frac{1}{1-\kappa}(H(\mathbf{v},r)-W(\mathbf{u},x^0,0^+))-\frac{n+2}{r}G(r)\nonumber\\
			&\geq&\frac{n+2}{r}\frac{1}{1-\kappa}(H(\mathbf{u}_r,r)-W(\mathbf{u},x^0,0^+))-\frac{n+2}{r}G(r)\nonumber\\
			&=&\frac{(n+2)\kappa}{(1-\kappa)r}G(r).
		\end{eqnarray}Here we have used the fact that $\mathbf{u}$ is the minimizer of the problem (1.1) with $\mathbf{u}_r =\mathbf{c}_r$ on $\partial B_1$. According to the monotonicity formula in Lemma \ref{monotonicity}, $G(r)\geq 0$ for any $r\in(0, r_0)$ and we conclude in the non-trivial case $G(r)>0$ in $(r_1,r_0)$ for any small $r_1>0$.  Thus the estimate from \eqref{eq6.5} implies
		\begin{equation}\label{eq6.6}
			\begin{aligned}
				\frac{G(r_0)}{G(r)}&\geq  \left(\frac{r_0}{r}\right)^{\frac{(n+2)\kappa}{(1-\kappa)}}, \quad \text{for} \quad r\in(r_1,r_0),
			\end{aligned}
		\end{equation}
		and for $0<\rho<\sigma\leq r_0$, we get
		\begin{equation}\label{eq6.7}
			\begin{aligned}
				&\int_{\partial B_1(0)}\int_\rho^\sigma\left|\frac{d\mathbf{u}_r}{dr}\right|drd\mathcal{H}^{n-1}\\
				=&\int_\rho^\sigma  r^{-1-n}\int_{\partial B_r(x_0)}\left|\nabla\mathbf{u}\cdot\nu -2\frac{\mathbf{u}}{r}\right|d\mathcal{H}^{n-1} dr\\
				\leq&\sqrt{\frac{n\omega_n}{2}}\int_\rho^\sigma r^{-\frac{1}{2}}\left(2 r^{-n-2}\int_{\partial B_r(x_0)}\left|\nabla\mathbf{u}\cdot\nu -2\frac{\mathbf{u}}{r}\right|^2d\mathcal{H}^{n-1}\right)^{\frac{1}{2}}dr\\
				\leq&\sqrt{\frac{n\omega_n}{2}}\int_\rho^\sigma r^{-\frac{1}{2}}\sqrt{e'(r)}dr\\
				\leq&\sqrt{\frac{n\omega_n}{2}}\left(\log (\sigma)-\log(\rho)\right)^{\frac{1}{2}} \left(e(\sigma)-e(\rho)\right)^{\frac{1}{2}},
			\end{aligned}
		\end{equation} where Lemma \ref{monotonicity} and Cauchy-Schwartz inequality have been applied. For any $0<2\rho<2r\leq r_0$, there exist two positive integers $l<m$ such that $\rho\in [2^{-m-1}, 2^{-m})$ and $r \in [2^{-l-1}, 2^{-l})$. Hence recalling that \eqref{eq6.5}-\eqref{eq6.7}, we obtain
		\begin{equation}\label{eq6.8}
			\begin{aligned}
				&\int_{\partial B_1(0)}\big|{\frac{\mathbf{u}(x_0+r x)}{r^2}-\frac{\mathbf{u}(x_0+\rho x)}{\rho^2}}\big|d\mathcal{H}^{n-1}\\
				\leq& \sum_{j=l}^m \int_{\partial B_1(0)}\int_{2^{-j-1}}^{2^{-j}}|\frac{d\mathbf{u}_r}{dr}|drd\mathcal{H}^{n-1}\\
				\leq&C(n)\sum_{j=l}^m\left(\log (2^{-j})-\log(2^{-j-1})\right)^{\frac{1}{2}} \left(e(2^{-j})-e(2^{-j-1})\right)^{\frac{1}{2}}\\
				\leq&C(n)\sum_{j=l}^m \left|W(\mathbf{u}, x_0, r_0)-W(\mathbf{u}, x_0, 0+)\right|^{\frac{1}{2}}(r_0 2^{j})^{\frac{-(n+2)\kappa}{2(1-\kappa)}}\\
				\leq&C(n)\left|W(\mathbf{u}, x_0, r_0)-W(\mathbf{u}, x_0, 0+)\right|^{\frac{1}{2}}r_0^{\frac{-(n+2)\kappa}{2(1-\kappa)}}\frac{c^l}{1-c} \\
				\leq&C(n,\kappa)\mid W(\mathbf{u}, x_0, r_0)-W(\mathbf{u}, x_0, 0+)\mid^{\frac{1}{2}}\left(\frac{r}{r_0}\right)^{\frac{(n+2)\kappa}{2(1-\kappa)}},
			\end{aligned}
		\end{equation}
		where $c= \displaystyle2^{{\frac{-(n+2)\kappa}{2(1-\kappa)}}}$.  Finally, let $\frac{\mathbf{u}(x^0+\rho_j x)}{\rho_j}\to \mathbf{u}_0$ as a certain sequence $\rho_j \rightarrow0+$, one can see \eqref{eq6.2}. Thus  we can conclude the proof.

	\end{proof}

	\vspace{15pt}

	\section{Regularity of free boundary}
	
	In this section, we will give the proof of the main theorem (Theorem \ref{regularity})  that the free boundary of an open neighbourhood of regular free boundary points $\mathcal{R}_{\mathbf{u}}$ is a $C^{1,\beta}$-surface in $D$. For this purpose, we firstly need to verify the assumption of the energy decay estimate (Proposition \ref{uniqueness}) uniformly in an open neighborhood of a regular free boundary point.
	
	\begin{lemma}\label{regularity1}
		Let $C_h$ be a compact set of points $x^0 \in \Gamma_0(\mathbf{u})$ with the following property, at least one blow-up limit $\mathbf{u}_0\in\mathbb{H}$ of $\mathbf{u}$ at $x^0$, that is, $\mathbf{u}_0(x)=\frac{f(0)}{2}\mathbf{e}(x^0)\max(x\cdot\nu(x^0),0)^2$ for some  $\nu(x^0)\in \partial B_1(0)\subset \mathbb{R}^n$ and $\mathbf{e}(x^0)\in \partial B_1(0)\subset \mathbb{R}^m$. Then there exist $r_0>0$ and $C<\infty$ such that
		$$
		\int_{\partial B_1(0)}\left|\frac{\mathbf{u}(x^0+rx)}{r^2}-\frac{f(0)}{2}\mathbf{e}(x_0)\max(x\cdot\nu(x^0),0)^2\right| d \mathcal{H}^{n-1}\leq Cr^{\frac{(n+2)\kappa}{2(1-\kappa)}},
		$$
		for every $x^0\in C_h$ and every $r\leq r_0<1$.
	\end{lemma}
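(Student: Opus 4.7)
The plan is to reduce Lemma \ref{regularity1} to the pointwise energy decay and uniqueness estimate of Proposition \ref{uniqueness} by upgrading its hypothesis to hold uniformly over the compact set $C_h$.

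First, I would observe that $C_h \subset \mathcal{R}_{\mathbf{u}}$: if some blow-up limit $\mathbf{u}_0$ at $x^0$ lies in $\mathbb{H}$, then by Lemma \ref{property}(2) and Remark \ref{rem1.7} we have $W(\mathbf{u}, x^0, 0+) = M(\mathbf{u}_0) = \alpha_n/2$. The strict inequality $\alpha_n/2 < \bar{\alpha}_n$ from Corollary \ref{coro4.5}, together with the homogeneity of every blow-up limit (Lemma \ref{property}(2)), then forces \emph{every} blow-up of $\mathbf{u}$ at $x^0$ to lie in $\mathbb{H}$, justifying the unambiguous choice of $\nu(x^0)$ and $\mathbf{e}(x^0)$ in the statement.

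The crux is the following uniform closeness: for every $\epsilon > 0$ there exists $r_0 = r_0(\epsilon, C_h) > 0$ such that for all $x^0 \in C_h$ and all $r \in (0, r_0]$, one can select $\mathbf{h}_{x^0,r} \in \mathbb{H}$ with
$$\|\mathbf{c}_r - \mathbf{h}_{x^0,r}\|_{W^{1,2}(B_1;\mathbb{R}^m)} + \|\mathbf{c}_r - \mathbf{h}_{x^0,r}\|_{L^{\infty}(B_1;\mathbb{R}^m)} \leq \epsilon.$$
I would establish this by contradiction. If it failed, there would exist sequences $x^k \in C_h$ and $r_k \downarrow 0$ whose associated $\mathbf{c}_{r_k}$ stay at distance $\geq \epsilon_0 > 0$ from $\mathbb{H}$. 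By compactness of $C_h$, $x^k \to x^\infty \in C_h$, and Proposition \ref{bounded} combined with the quadratic-growth estimate of Proposition \ref{growth} provides a uniform $C^{1,\alpha}_{loc}$ bound on $\{\mathbf{u}_{x^k, r_k}\}$. Passing to a subsequence, $\mathbf{u}_{x^k, r_k} \to \mathbf{u}_\infty$ locally in $C^1$ and weakly in $W^{1,2}_{loc}$. The two-parameter version of the monotonicity argument in Lemma \ref{property}(2) then shows $\mathbf{u}_\infty$ is a 2-homogeneous global solution with $M(\mathbf{u}_\infty) = W(\mathbf{u}, x^\infty, 0+) = \alpha_n/2$, so $\mathbf{u}_\infty \in \mathbb{H}$ by Corollary \ref{coro4.5}. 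Since $\mathbf{c}_{r_k}$ is the 2-homogeneous extension of $\mathbf{u}_{x^k, r_k}\big|_{\partial B_1}$, the strong $C^1$ convergence on $\partial B_1$ propagates to $\mathbf{c}_{r_k} \to \mathbf{u}_\infty$ in $W^{1,2}(B_1) \cap L^\infty(B_1)$, contradicting the separation.

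Once this closeness is available, I would fix $\epsilon = \delta$ from Lemma \ref{epiperimetric}. For every $x^0 \in C_h$ and $r \leq r_0$ the epiperimetric inequality applies to $\mathbf{c}_r$ with base point $\mathbf{h}_{x^0, r}$, so the hypothesis of Proposition \ref{uniqueness} is met at $x^0$. The estimate \eqref{eq6.2} then yields
$$\int_{\partial B_1(0)}\left|\tfrac{\mathbf{u}(x^0+rx)}{r^2} - \tfrac{f(0)}{2}\mathbf{e}(x^0)\max(x\cdot\nu(x^0),0)^2\right| d\mathcal{H}^{n-1} \leq C\bigl|W(\mathbf{u}, x^0, r_0) - \tfrac{\alpha_n}{2}\bigr|^{1/2}\left(\tfrac{r}{r_0}\right)^{\frac{(n+2)\kappa}{2(1-\kappa)}}.$$
Since $x \mapsto W(\mathbf{u}, x, r_0)$ is continuous (for fixed $r_0$) and $C_h$ is compact, the prefactor is uniformly bounded on $C_h$, which completes the proof. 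The main obstacle is the uniform-closeness step: one must pass from the pointwise statement ``every blow-up at $x^0$ is in $\mathbb{H}$'' to the two-parameter statement for the pairs $(x^0, r)$, which requires simultaneously the uniform $C^{1,\alpha}$ compactness of rescalings across $C_h$ and the fact that the energy density is constantly $\alpha_n/2$ on $C_h$.
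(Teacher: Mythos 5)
Your proposal follows essentially the same route as the paper's proof: reduce the lemma to Proposition \ref{uniqueness} by proving, via a compactness--contradiction argument, that the two-parameter rescaled family is uniformly $\epsilon$-close to $\mathbb{H}$ in $W^{1,2}\cap L^\infty$ on $\partial B_1$. The one step you leave vague is the one the paper handles by a Dini-type argument (step (i) of its proof): since $r\mapsto W(\mathbf u,x,r)$ decreases to $\alpha_n/2$ pointwise on $C_h$ and $C_h$ is compact, the convergence is uniform, which is what guarantees $M(\mathbf u_\infty)\le\lim_k W(\mathbf u,x^k,\rho r_k)=\alpha_n/2$ for the two-parameter blow-up limit -- ``the two-parameter version of the monotonicity argument'' you invoke does need this uniformity, and you correctly flag it as the crux but do not say how to obtain it. On the other hand, you are slightly more precise than the paper in one respect: Proposition \ref{uniqueness} requires closeness of the homogeneous extension $\mathbf c_r$ to $\mathbb H$, not of $\mathbf u_r$ itself, and you explicitly note that the $C^1$ convergence on $\partial B_1$ propagates to $\mathbf c_{r_k}$ in $W^{1,2}(B_1)\cap L^\infty(B_1)$, whereas the paper elides this. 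Your alternative bound on the prefactor (continuity of $x\mapsto W(\mathbf u,x,r_0)$ plus compactness of $C_h$) is also fine and interchangeable with the paper's use of the Dini bound.
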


	\begin{proof}
		First of all, we prove that such compact set $C_h$ is non-empty. Since the set $\Gamma_0(\mathbf{u})$ known from Definition \ref{regular} and Corollary \ref{coro4.5} is obviously non-empty, we only need to prove that $C_h$ is a compact set. Taking a sequence $\{x^i\}\in C_h$, and $\displaystyle\lim_{i\to \infty} x^i\to x^0$, we show that $x^0\in C_h$. It follows from
		$$\displaystyle\lim_{r\rightarrow 0} \frac{\mathbf{u}(x^i+rx)}{r^2}=\frac{f(0)}{2}\mathbf{e}(x^i)\max(x\cdot\nu(x^i),0)^2.$$
		Then
		\begin{equation*}
			\begin{aligned}
				&\displaystyle\lim_{r\rightarrow 0}\left| \frac{\mathbf{u}(x^0+rx)}{r^2}-\frac{f(0)}{2}\mathbf{e}(x^0)\max(x\cdot\nu(x^0),0)^2\right|\\
				\leq&\displaystyle\lim_{r\rightarrow 0}\Bigg[\left|\frac{\mathbf{u}(x^0+rx)}{r^2}-\frac{\mathbf{u}(x^i+rx)}{r^2}\right|+\left|\frac{\mathbf{u}(x^i+rx)}{r^2}-\frac{f(0)}{2}\mathbf{e}(x^i)\max(x\cdot\nu(x^i),0)^2\right|\\
				&+\left|\frac{f(0)}{2}\mathbf{e}(x^i)\max(x\cdot\nu(x^i),0)^2-\frac{f(0)}{2}\mathbf{e}(x^0)\max(x\cdot\nu(x^0),0)^2\right|\Bigg]\\
				=&0,
			\end{aligned}
		\end{equation*}
		where using the regularity of solution $u$ and $\displaystyle\lim_{i\to \infty} x^i\to x^0$. Therefore, we know that $C_h$ is obviously not empty.
		Next, the remaining proof consists of four parts.
		
		(i) Firstly, for any $x\in C_h$, it is easy to see that for any $\epsilon>0$, there exist $r_0(x, \epsilon)>0$ such that $W(\mathbf{u},x^0,r)\leq \epsilon +\frac{\alpha_n}{2}$ for any $r\in(0,r_0(x, \epsilon))$. Since $W(\mathbf{u},x, r)$ is  increasing with respect to $r$, then using Dini's theorem  there exists  a uniform $r_0$ independent of the choice of $x$ such that $$W(\mathbf{u},x,r)\leq \epsilon +\frac{\alpha_n}{2}\ \ \text{for any $r\in(0,r_0)$ and any $x\in C_h$.}$$
		
		(ii) Secondly, if $\rho_j\rightarrow 0$, $x^j \in C_h$ and $\mathbf{u}_j:=\mathbf{u}(x^j+\rho_j\cdot)/\rho_j^2\rightarrow \mathbf{v}$ in $W^{1,2}_{loc}(\mathbb{R}^n;\mathbb{R}^m)$ as $j\rightarrow \infty$, then  recalling that equality \eqref{eq3.7} again, $\mathbf{v}$ is a homogeneous global solution of degree 2 to the system \eqref{eq1.1} and
		\begin{equation*}
			\begin{aligned}
				M(\mathbf{v})=&\displaystyle\lim_{j\rightarrow \infty}  \frac{1}{(\rho\rho_j)^{n+2}} \int_{B_{\rho\rho_j}(x_j)}|\nabla \mathbf{u}(y)|^2+F'(0)|\mathbf{u}(y)|dy-\frac{2}{(\rho\rho_j)^{n+3}}\int_{\partial B_{\rho\rho_j}(x_j)}|\mathbf{u}(y)|^2d\mathcal{H}^{n-1}\\
				\leq &\displaystyle\lim_{j\rightarrow \infty}  \frac{1}{{(\rho\rho_j)}^{n+2}} \int_{{B_{\rho\rho_j}}(x_j)}|\nabla \mathbf{u}(y)|^2+F(|\mathbf{u}(y)|)dy-\frac{2}{(\rho\rho_j)^{n+3}}\int_{\partial B_{\rho\rho_j}(x_j)}|\mathbf{u}(y)|^2d\mathcal{H}^{n-1}\\
				=&\displaystyle\lim_{j\rightarrow \infty} W(\mathbf{u},x^j,\rho\rho_j) = \frac{\alpha_n}{2},
			\end{aligned}
		\end{equation*}
		which shows $ M(\mathbf{v})=\displaystyle\frac{\alpha_n}{2}.$ Due to Corollary \ref{coro4.5}, $\mathbf{v}\in \mathbb{H}.$
		
		(iii) We claim that for any $\rho>0$ small enough, $\mathbf{u}(x+\rho \cdot)/\rho^2$ is uniformly close to $\mathbb{H}$ in the $W^{1,2}_{loc}(\mathbb{R}^n;\mathbb{R}^m)\cap L^{\infty}_{loc}(\mathbb{R}^n;\mathbb{R}^m)$-topology for $x\in C_h$.
		
		To verify this claim, one may use the argument by contradiction. Assume this claim fails, then there exist  $\rho_j\rightarrow 0$ and $x^j\in C_h$ such that for any $\mathbf{h}\in \mathbb{H}$, there holds
		\begin{equation}\label{eq7.1}
			\|\mathbf{u}(x^j+\rho_j \cdot)/\rho_j^2-\mathbf{h}\|_{W^{1,2}_{B_1}(\mathbb{R}^n;\mathbb{R}^m)}+\|\mathbf{u}(x^j+\rho_j \cdot)/\rho_j^2-\mathbf{h}\|_{L^{\infty}_{B_1}(\mathbb{R}^n;\mathbb{R}^m)}\geq \delta >0.
		\end{equation} Let $\mathbf{u}_j:=\mathbf{u}(x^j+\rho_j \cdot)/\rho_j^2$. Owing to Theorem \ref{growth}, we obtain that $|\mathbf{u}_j|\leq C$, and  $|\nabla\mathbf{u}_j|\leq C$, then $\|\mathbf{u}_j\|_{L^{\infty}_{loc}(\mathbb{R}^n;\mathbb{R}^m)}\leq C$. Thus
		$\|\mathbf{u}_j\|_{W^{1,2}_{loc}(\mathbb{R}^n;\mathbb{R}^m)}\leq C$. Hence there is convergent subsequence, still denoted by $\mathbf{u}_j$,  such that $\mathbf{u}_j\rightarrow \mathbf{w}$ in $W^{1,2}_{loc}(\mathbb{R}^n;\mathbb{R}^m)\cap L^{\infty}_{loc}(\mathbb{R}^n;\mathbb{R}^m)$.  By (ii), we know that $\mathbf{w}\in \mathbb{H}$ ,  which contradicts with the fact \eqref{eq7.1}.
		
		(iv) Finally, we conclude the proof of Lemma \ref{regularity1}.
		
		In fact, the claim of (iii) implies that  the assumptions  in Proposition \ref{uniqueness} hold true and thus there is a constant  $C=C(n,\kappa)>0$ such that
		\begin{equation*}
			\begin{aligned}
				\int_{\partial B_1(0)}|\frac{\mathbf{u}(x_0+rx)}{r^2}-\mathbf{w}(x)| d\mathcal{H}^{n-1}&\leq C|W(\mathbf{u},x_0, r)-W(\mathbf{u},x^0,0+)|^{\frac{1}{2}}(\frac{r}{r_0})^{\frac{(n+2)\kappa}{2(1-\kappa)}}\\
				&\leq C(\epsilon+\frac{\alpha_n}{2}-\frac{\alpha_n}{2})^{\frac{1}{2}}(\frac{r}{r_0})^{\frac{(n+2)\kappa}{2(1-\kappa)}}\\
				&\leq C(n,\kappa)r^{\frac{(n+2)\kappa}{2(1-\kappa)}}.
			\end{aligned}
		\end{equation*}
		for any ${x}_0\in C_h$ and $r\in(0, r_0)$. Here $\mathbf{w}$ is the unique blow-up limit of $\mathbf{u}$ at $x_0$ and $\mathbf{w}\in \mathbb{H}$.
	\end{proof}

	\vspace{5pt}
	In the following, we use the Lemma \ref{regularity1} to prove the main result.
	
	\noindent{\it Proof of Theorem \ref{regularity}.}
	Consider $x^0\in \mathcal{R}_{\mathbf{u}}\subset \Gamma_0(\mathbf{u})$, by Lemma \ref{regularity1}, there exists $\delta_0>0$ such that $B_{2\delta_0}(x^0)\subset D$ and
	\begin{equation}\label{eq7.2}
		\int_{\partial B_1(0)}\left|\frac{\mathbf{u}(x^1+rx)}{r^2}-\frac{f(0)}{2}\mathbf{e}(x^1)\max(x\cdot\nu(x^1),0)^2 \right| d\mathcal{H}^{n-1}\leq Cr^{\frac{(n+2)\kappa}{2(1-\kappa)}},
	\end{equation}
	for every $x^1\in \mathcal{R}_{\mathbf{u}}\cap \overline{B_{\delta_0}(x^0)} \subset C_h$ and every $r\leq\min(\delta_0,r_0)\leq r_0<1$.
	
	In the following, we split the proof into the establishment of several claims.
	
	\textbf{Claim 1}. $x^1\longmapsto\nu(x^1)$ and $x^1\longmapsto\mathbf{e}(x^1)$ are H\"older-continuous with exponent $\beta$ on $\mathcal{R}_{\mathbf{u}}\cap \overline{B_{\delta_1}(x^0)}$ for some $\delta_1\in(0,\delta_0)$.
	
	\noindent{\it Proof of Claim 1}. In fact, the Proposition \ref{growth} leads to that
	\begin{equation*}
		\begin{aligned}
			&\frac{f(0)}{2}\int_{\partial B_1(0)}\left|\mathbf{e}(x^1)\max(x\cdot\nu(x^1),0)^2-\mathbf{e}(x^2)\max(x\cdot\nu(x^2),0)^2\right|d\mathcal{H}^{n-1}\\
			=&\int_{\partial B_1(0)}\Big{|}\frac{f(0)}{2}\mathbf{e}(x^1)\max(x\cdot\nu(x^1),0)^2-\frac{\mathbf{u}(x^1+rx)}{r^2}+\frac{\mathbf{u}(x^1+rx)}{r^2}-\frac{\mathbf{u}(x^2+rx)}{r^2}  \\
			&+\frac{\mathbf{u}(x^2+rx)}{r^2}  -\frac{f(0)}{2}\mathbf{e}(x^2)\max(x\cdot\nu(x^2),0)^2\Big{|}d\mathcal{H}^{n-1}\\
			\leq& 2Cr^{\frac{(n+2)\kappa}{2(1-\kappa)}} +\int_{\partial B_1(0)}\left|\frac{\mathbf{u}(x^1+rx)}{r^2}-\frac{\mathbf{u}(x^2+rx)}{r^2} \right|d\mathcal{H}^{n-1}\\
			\leq&2Cr^{\frac{(n+2)\kappa}{2(1-\kappa)}} +\int_{\partial B_1(0)}\int^1_0\left| \frac{\nabla\mathbf{u}(x^1+rx+t(x^2-x^1))}{r^2}\right|\left|x^1-x^2\right|dtd\mathcal{H}^{n-1}\\
			\leq&2Cr^{\frac{(n+2)\kappa}{2(1-\kappa)}}  +\int_{\partial B_1(0)}\int^1_0 \frac{C dist \left(x^1+rx+t(x^2-x^1),\Gamma_0(\mathbf{u})\right)}{r^2} \left|x^1-x^2\right|dtd\mathcal{H}^{n-1}.
		\end{aligned}
	\end{equation*}
	This implies,
	\begin{equation}\label{eq7.3-1}
		\begin{aligned}
			&\frac{f(0)}{2}\int_{\partial B_1(0)}\left|\mathbf{e}(x^1)\max\left(x\cdot\nu(x^1),0\right)^2-\mathbf{e}(x^2)\max\left(x\cdot\nu(x^2),0\right)^2\right|d\mathcal{H}^{n-1}\\
			\leq& 2Cr^{\frac{(n+2)\kappa}{2(1-\kappa)}}  +C_1\frac{\max\left(r,|x^1-x^2|\right)|x^1-x^2|}{r^2}\\
			\leq&2Cr^{\frac{(n+2)\kappa}{2(1-\kappa)}}+C_1r^{-1}|x^1-x^2|\\
			=&2Cr^{\frac{(n+2)\kappa}{2(1-\kappa)}}+C_1|x^1-x^2|^{1-\gamma}\\
			=&(2C+C_1)|x^1-x^2|^{\gamma {\frac{(n+2)\kappa}{2(1-\kappa)}}},
		\end{aligned}
	\end{equation}
	where we choose $\gamma:=\left(1+{\frac{(n+2)\kappa}{2(1-\kappa)}}\right)^{-1}$ and $r:=|x^2-x^1|^{\gamma}\leq \min (\delta_0,r_0)\leq r_0$.
	
	On the other hand, the left side of the inequality \eqref{eq7.3-1}  satisfies
	\begin{equation}\label{eq7.3}
		\begin{aligned}
			&\frac{f(0)}{4}\int_{\partial B_1(0)}\left|\mathbf{e}(x^1)\max(x\cdot\nu(x^1),0)^2-\mathbf{e}(x^2)\max(x\cdot\nu(x^2),0)^2\right|d\mathcal{H}^{n-1}\\
			\geq&c(n)\left(\left|\nu(x^1)-\nu(x^2)\right|+\left|\mathbf{e}(x^1)-\mathbf{e}(x^2)\right|\right).
		\end{aligned}
	\end{equation}
	Indeed, if not, then one may assume that there exist sequences of unit vectors  $\{\nu_j(x^1)\}$, $\{\nu_j(x^2)\}$, $\{\mathbf{e}_j(x^1)\}$  and $\{\mathbf{e}_j(x^2)\}$ such that
	$$T_j:=\frac{1}{c_j}\int_{\partial B_1(0)}|\mathbf{e}_j(x^1)\max(x\cdot\nu_j(x^1),0)^2-\mathbf{e}_j(x^2)\max(x\cdot\nu_j(x^2),0)^2|d\mathcal{H}^{n-1}\rightarrow 0 ,$$
	where  $c_j=| \nu_j(x^1) -\nu_j(x^2)|+|{\mathbf{e}}_j(x^1)-{\mathbf{e}}_j(x^2)|$.
	Due to the compactness,  for $j\rightarrow +\infty$, up to subsequences, $\nu_j(x^1)\rightarrow \bar{\nu}(x^1)$, $\nu_j(x^2)\rightarrow \bar{\nu}(x^2)$, $\mathbf{e}_j(x^1)\rightarrow \bar{\mathbf{e}}(x^1)$, $\mathbf{e}_j(x^2)\rightarrow \bar{\mathbf{e}}(x^2)$,
	$( \nu_j(x^1) -\nu_j(x^2))/c_j\rightarrow\eta$, and $(\mathbf{e}_j(x^1)-{\mathbf{e}}_j(x^2))/c_j\rightarrow \xi$. Since
	$$\int_{\partial B_1(0)}|\mathbf{e}_j(x^1)\max(x\cdot\nu_j(x^1),0)^2-\mathbf{e}_j(x^2)\max(x\cdot\nu_j(x^2),0)^2|d\mathcal{H}^{n-1}\rightarrow 0,$$
	then $\bar{\mathbf{e}}(x^1)=\bar{\mathbf{e}}(x^2)$ and $\bar{\nu}(x^1)=\bar{\nu}(x^2).$ It is easy to see that as $  j\to\infty$, we deduce
	\begin{eqnarray*}
		0&\gets& T_j \\
		&\geq&\frac{1}{c_j}\int_{S_j}\left|\left(\mathbf{e}_j(x^1)-\mathbf{e}_j(x^2)\right)\left(x\cdot\nu_j(x^1)\right)^2+\mathbf{e}_j(x^2)\left(x\cdot\nu_j(x^1)\right)^2 -\mathbf{e}_j(x^2)\left(x\cdot\nu_j(x^2)\right)^2\right|d\mathcal{H}^{n-1}\\
		&\to& \int_{\partial B_1(0)\cap \{x\cdot\bar{\nu}(x^1)>0\}}\left|\xi \left(x\cdot\bar{\nu}(x^1)\right)^2+2\bar{\mathbf{e}}(x^1)x\cdot\bar{\nu}(x^1)x\cdot\eta\right|d\mathcal{H}^{n-1}
	\end{eqnarray*} where $S_j=\partial B_1(0)\cap \{x\cdot\nu_j(x^1)>0\}\cap\{x\cdot\nu_j(x^2)>0\}$.
	Thus,  $\xi=\displaystyle\frac{2\bar{\mathbf{e}}(x^1)x\cdot\eta}{ x\cdot\bar{\nu}(x^1)}$. However, one also has
	$$0=\left(|\mathbf{e}_j(x^1)|^2-|\mathbf{e}_j(x^2)|^2\right)/c_j=\left(\left(\mathbf{e}_j(x^1)+\mathbf{e}_j(x^2)\right)\left(\mathbf{e}_j(x^1)-\mathbf{e}_j(x^2)\right)\right)/c_j\to 2\bar{\mathbf{e}}(x^1)\cdot \xi,$$
	as  $  j\to\infty$ and then
	$$0=\bar{\mathbf{e}}(x^1)\cdot \xi=\bar{\mathbf{e}}(x^1)\cdot\frac{2\bar{\mathbf{e}}(x^1)x\cdot\eta}{ x\cdot\bar{\nu}(x^1)}=\frac{2x\cdot\eta}{ x\cdot\bar{\nu}(x^1)}, $$
	which arrives at a contradiction. Therefore, we finish the proof of Claim 1.
	
	\textbf{Claim 2}. $\forall \ \epsilon>0$, there exists $\delta_2\in (0,\delta_1)$ such that for any  $x^1\in \mathcal{R}_{\mathbf{u}}\cap \overline{B_{\delta_0}(x^0)}$,  we obtain
	
	\begin{equation}\label{eq7.4}
		\begin{split}
			|\mathbf{u}(y)|&=0 \quad \text{for}\quad y\in  \overline{B_{\delta_2}(x^1)} \quad \text{satisfying}\quad (y-x^1)\cdot \nu(x^1)<-\epsilon|y-x^1|, \\
			\text{and}\qquad\qquad&\\
			|\mathbf{u}(y)|&>0 \quad \text{for}\quad y\in  \overline{B_{\delta_2}(x^1)} \quad \text{satisfying}\quad (y-x^1)\cdot \nu(x^1)>\epsilon|y-x^1|.
		\end{split}
	\end{equation}
	
	\noindent{\it Proof of Claim 2}.  One may assume \eqref{eq7.4} does not hold true, and then there are a sequence $\mathcal{R}_{\mathbf{u}}\cap \overline{B_{\delta_0}(x^0)}\ni x^m \to \bar{x}$ and a sequence $y^m-x^m\to 0$ as $m\to\infty$ such that
	\begin{eqnarray}\label{eq7.5}
		\begin{aligned}
			\text{either}\ |\mathbf{u}(y^m)|&>&0 \quad \text{for}\quad y^m\in  \overline{B_{\delta_2}(x^m)} \quad \text{satisfying}\quad (y^m-x^m)\cdot \nu(x^m)&<&&-\epsilon|y^m-x^m|,& \\
			\text{or}\quad |\mathbf{u}(y^m)|&=&0 \quad \text{for}\quad y^m\in  \overline{B_{\delta_2}(x^m)} \quad \text{satisfying}\quad (y^m-x^m)\cdot \nu(x^m)&>&&\epsilon|y^m-x^m|.&
		\end{aligned}
	\end{eqnarray}
	The inequality \eqref{eq7.2} leads to that
	$\mathbf{u}_j(x)=\frac{\mathbf{u}(x^j+|y^j-x^j|x)}{|y^j-x^j|^2}$ is converges $\frac{f(0)}{2}\mathbf{e}(\bar{x})\max(\bar{x}\cdot\nu(\bar{x}),0)^2$ as $j\to \infty$ in $C^{1,\alpha}_{loc}(\mathbb{R}^n;\mathbb{R}^m)$ and that $\mathbf{u}_j=\mathbf{0}$ on each compact subset $\Omega$ of $\{x\cdot \nu(\bar{x})<0\}$ provided that $j$ is large enough. Therefore, this is contradictory to \eqref{eq7.5} for large enough $j$.
	
	\textbf{Claim 3}. There exists $\delta_3\in(0,\delta_2)$ such that $\partial \{ |\mathbf{u}|>0\}$ is in $B_{\delta_3}(x^0)$ the graph of a differentiable function.
	
	$Proof \ of \ Claim \ 3$.  let $\nu(x^0):=\mathbf{e}^n:=(0,...,0,1)$, $\mathbf{e}(x^0):=\mathbf{e}^1:=(1,0,...0)$ and fixing $\epsilon:=\frac{1}{2}$, then there exists a constant $\delta_2>0$ (see Claim 2) such that \eqref{eq7.4} holds true. One may define two functions  $g^{\pm}:  B'_{\delta_2/2}(0)\longmapsto [-\infty,+\infty]$ as follows:
	\begin{eqnarray*}
		g^+(x') := \sup\{x_n:x^0+(x',x_n)\in \Gamma(\mathbf{u})\}\ \text{and} \ g^-(x')\ := \inf\{x_n:x^0+(x',x_n)\in \Gamma(\mathbf{u})\}.
	\end{eqnarray*}
	For $g^{\pm}$, we have the following properties.
	
	$\bullet$ \ \ $-\infty<g^-\leq g^+<+\infty$.  In fact, for $y\in \overline{B_{\delta_2}(x^0)}\cap\Gamma(\mathbf{u})$, applying the Claim 2 for $x^0$, it follows that
	$$-\epsilon |y-x^0|\leq(y-x^0)\cdot\nu(x^0)\leq \epsilon|y-x^0|,$$
	which shows $-\epsilon\delta_2\leq(y-x^0)_n \leq \epsilon\delta_2,$
	i.e. $-\epsilon\delta_2\leq x_n \leq \epsilon\delta_2 ,$
	which implies $-\infty<g^-\leq g^+<+\infty$ on $B'_{{\delta_2}/{2}}(0)$.
	
	$\bullet$ \ \ $g^+(x')=g^-(x')$ for any $ x'\in B'_{\delta_2/2}(0)$. In fact, since $\left|\nabla\mathbf{u}\left(x^0+(x',g^-(x'))\right)\right|=0$ for every $x'\in B'_{\delta_2/2}(0)$, then  $x^0+(x',g^-(x'))\in\Gamma_0(\mathbf{u})$. Recalling Corollary \ref{coro5.8} and $x^0\in \mathcal{R}_{\mathbf{u}}$, it infers  that $x^0+(x',g^-(x'))\in \mathcal{R}_{\mathbf{u}}$ for any $x'\in B'_{\delta_2/2}(0)$. Notice that the facts  \eqref{eq7.2}-\eqref{eq7.4}, there exists a $\delta_3\in(0, \delta_2)$ such that  $|\nu(x)-\nu(y)|\leq \frac{1}{4}$ when $x, \  y\in \overline{B_{\delta_3}(x^0)}\cap\Gamma(\mathbf{u})$.
	According to $x^1:= x^0+(x',g^-(x'))\in\mathcal{R}_{\mathbf{u}}\cap \overline{B_{\delta_1}(x_0)}$ , Applying Claim 2 for $x'$, it follows that
	$$
	\begin{aligned}
		&|g^+(x')-g^-(x')|-\frac{1}{4}|g^+(x')-g^-(x')|\\
		&\leq (0, g^+(x')-g^-(x')) \cdot (\nu(x^0)+ \nu(x^1)-\nu(x^0))\\
		&\leq\frac{1}{2}|g^+(x')-g^-(x')|
	\end{aligned}
	$$ which shows $g^+(x')=g^-(x')$ in $B'_{\delta2/2}(0)$.
	
	$\bullet$ \ \ $g(x'):=g^+(x')=g^-(x')\in C^{0,1}(\overline{B'_{\delta_3}(0)})$. Indeed, noting the Claim 2, one may see that there  is in a uniform cone such that this cone  stay above $\partial\{|\mathbf{u}|>0\}\cap B_{\delta_3}(x^0)$, which shows $g\in C^{0,1}(\overline{B'_{\delta_3}(0)})$. In particular, all free boundary points close to $x^0$ belong to $\mathcal{R}_{\mathbf{u}}$, i.e. there are no other free boundary points (for example free boundary points with non-vanishing gradient) in the neighbourhood of $x^0$.
	
	$\bullet $ \ \ $g\in C^{1, \beta}(\overline{B'_{\delta_3}(0)})$. In fact,  for some  sufficiently small $\delta_3\in(0,\delta_2)$, $\partial \{ |\mathbf{u}|>0\}$ is the graph of a Lipschitz function on $\overline{B_{\delta_3}(x^0)}$. From Claim 1, it follows that there exists a uniform constant $C>0$ such that $|\nu(x)-\nu(y)|\leq  C|x-y|^{\beta}$ for any $x, y\in  \overline{B_{\delta_3}(x^0)}$, which concludes the proof of our main result.
	
	$\hfill\square$

	\begin{corollary}{(Macroscopic criterion for regularity)}
		Let $\bar{\alpha}_n $ be the constant defined in Corollary \ref{coro4.5}. Then $B_{2r}(x^0) \subset D, x^0 \in \{|{u}| > 0\}$ and $W(\mathbf{u},x^0,r) <\bar{\alpha}_n$ imply that $\partial\{|\mathbf{u}| > 0\}$ is in an open neighbourhood of $x^0$ a $C^{1,\beta}$-surface.
	\end{corollary}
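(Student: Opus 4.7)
The plan is to reduce this corollary to Theorem \ref{regularity} by proving that $x^0$ must in fact lie in the regular set $\mathcal{R}_{\mathbf{u}}$. The interesting case is $x^0\in\partial\{|\mathbf{u}|>0\}$, since otherwise the free boundary does not enter a small neighborhood of $x^0$ and there is nothing to prove. If $|\nabla\mathbf{u}(x^0)|\neq 0$, then $x^0\in\Gamma_1(\mathbf{u})$ and the implicit function theorem already yields a local $C^{1,\alpha}$ parametrization (hence $C^{1,\beta}$) of the free boundary near $x^0$, so the substantive case to treat is $x^0\in\Gamma_0(\mathbf{u})$.

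Under this assumption, I would first apply the Weiss-type monotonicity formula (Lemma \ref{monotonicity}) to obtain
$$W(\mathbf{u},x^0,0+)\leq W(\mathbf{u},x^0,r)<\bar{\alpha}_n.$$
Next, let $\mathbf{u}_0$ be any blow-up limit of $\mathbf{u}$ at $x^0$. Lemma \ref{property}(2) guarantees that $\mathbf{u}_0$ is a 2-homogeneous global solution in the sense of Definition \ref{global}. Combining the identity $W(\mathbf{u},x^0,\rho)=H(\mathbf{u}_{x^0,\rho},\rho)$ with the convergence $\lim_{\rho\to 0+}H(\mathbf{u}_{x^0,\rho},\rho)=M(\mathbf{u}_0)$ recorded just after Definition \ref{weissenergy}, I get
$$M(\mathbf{u}_0)=W(\mathbf{u},x^0,0+)<\bar{\alpha}_n.$$

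The critical step is now a direct appeal to the definition of $\bar{\alpha}_n$ in Corollary \ref{coro4.5}: since $\bar{\alpha}_n$ is the infimum of $M(\mathbf{v})$ over 2-homogeneous global solutions $\mathbf{v}\notin\mathbb{H}$, the strict inequality $M(\mathbf{u}_0)<\bar{\alpha}_n$ forces $\mathbf{u}_0\in\mathbb{H}$. Remark \ref{rem1.7} then gives $M(\mathbf{u}_0)=\alpha_n/2$, so that $W(\mathbf{u},x^0,0+)=\alpha_n/2$, which is exactly the condition $x^0\in\mathcal{R}_{\mathbf{u}}$. Theorem \ref{regularity} then supplies the $C^{1,\beta}$-regularity of the free boundary in an open neighborhood of $x^0$.

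The main (rather mild) obstacle I foresee is not in the argument itself but in correctly interpreting the hypothesis $x^0\in\{|\mathbf{u}|>0\}$: one must separately dispose of the trivial interior case and of $\Gamma_1(\mathbf{u})$ via the implicit function theorem, so that the macroscopic hypothesis $W(\mathbf{u},x^0,r)<\bar{\alpha}_n$ is deployed exactly where it is needed, namely in classifying the blow-up at a degenerate free boundary point. Once this classification is in hand, everything else is bookkeeping: monotonicity gives the passage $r\mapsto 0+$, Corollary \ref{coro4.5} rules out any non-half-space blow-up, and Theorem \ref{regularity} closes the argument.
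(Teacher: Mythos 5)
Your proposal is correct and follows essentially the same route as the paper: after reducing to the degenerate free-boundary case $x^0\in\Gamma_0(\mathbf{u})$, use the Weiss monotonicity (Lemma \ref{monotonicity}) to pass from the macroscopic bound $W(\mathbf{u},x^0,r)<\bar{\alpha}_n$ to the energy density $W(\mathbf{u},x^0,0+)=M(\mathbf{u}_0)<\bar{\alpha}_n$, invoke Corollary \ref{coro4.5} (together with Lemma \ref{property}) to force $\mathbf{u}_0\in\mathbb{H}$ and hence $x^0\in\mathcal{R}_{\mathbf{u}}$, and then apply Theorem \ref{regularity}. The paper phrases this as a contradiction argument (if $\nabla\mathbf{u}(x^0)=0$ and $x^0\notin\mathcal{R}_{\mathbf{u}}$ then $W(\mathbf{u},x^0,r)\geq\bar{\alpha}_n$), which is just the contrapositive of your direct classification; the cited lemmas and the underlying energy-gap mechanism are identical.
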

	
	\begin{proof}  Due to $C^{1,\beta}$-regularity of $\mathbf{u}$ and Theorem \ref{regularity}, it suffices to show that for $W(\mathbf{u},x^0,r) <\bar{\alpha}_n$ either (i) $\nabla\mathbf{u}(x^0\neq 0) $or (ii) $x^0\in\mathcal{R}_{\mathbf{u}}$. If both (i) and (ii) fail then by Lemma \ref{monotonicity}, Lemma \ref{property} and Corollary \ref{coro4.5}, $W(\mathbf{u},x^0,r)\geq\displaystyle\lim_{r\rightarrow 0} W(\mathbf{u},x^0,r)\geq    \bar{\alpha}_n$ contradicting the assumption.

	\end{proof}

	\vspace{10pt}
	\appendix
	\renewcommand{\appendixname}{Appendix~\Alph{section}}
	\section{Regularity and uniqueness of the solution to the system \eqref{eq1.1}.}
	We introduce the corresponding elliptic system for this minimization problem, and then we provide proofs of the regularity (Proposition \ref{rem1.2}) and uniqueness of the solution to this system.
	
	Since the energy of the minimization problem \eqref{eq1.0} has non-negativity convexity and weak convergence that has lower semicontinuity, we can know that $E(\mathbf{u})=\int_{D}{|\nabla\mathbf{u}|}^2+F(|\mathbf{u}|)dx$ exists a minimizer for each $\mathbf{g}\in W^{1,2}(D;\mathbb{R}^m )$.
	Then for each $\boldsymbol{\phi} \in W^{1,2}_0(D;\mathbb{R}^m)$,
	\begin{align}\nonumber
		E(\mathbf{u})& \leq E(\mathbf{u}+ \epsilon \boldsymbol{\phi})\\
		&=\int_D|\nabla\mathbf{u}+ \epsilon \nabla \boldsymbol{\phi}|^2+F(|\mathbf{u}+ \epsilon \boldsymbol{\phi} |)dx.\nonumber
	\end{align}
	Simplifying the above inequality directly, we obtain
	\begin{equation}\label{eq2.4}
		0\leq \epsilon\int_D 2\nabla\mathbf{u}\cdot \nabla\boldsymbol{\phi}dx+\epsilon^2\int_D|\nabla\boldsymbol{\phi}|^2dx+\int_D F(|\mathbf{u}+ \epsilon\boldsymbol{\phi}|)-F|\mathbf{u}|dx.
	\end{equation}
	The  convexity of $F$ implies that
	$$
	F(|\mathbf{u}+\epsilon \boldsymbol{\phi}|)-F(|\mathbf{u}|)\leq F'(|\mathbf{u}+\epsilon \boldsymbol{\phi}|)(|\mathbf{u}+\epsilon \boldsymbol{\phi}|-|\mathbf{u}|)\leq F'(|\mathbf{u}+\epsilon \boldsymbol{\phi}|)|\epsilon| | \boldsymbol{\phi}|.
	$$
	
	In the one case $\{\mathbf{u}=\mathbf{0}\}$. From the assumption of $F$, it implies that $F'(|\mathbf{u}+\epsilon \boldsymbol{\phi}|)\leq C_0$, where $C_0$ is a given positive constant. Hence, the inequality (\ref{eq2.4}) gives that
	$$
	2\left|\int_D\nabla\mathbf{u}\cdot\nabla\boldsymbol{\phi}dx\right|\leq C_0\|\boldsymbol{\phi}\|_{L^1(D;\mathbb{R}^m)},
	$$
	therefore, near the free boundary we have the fact $\Delta\mathbf{u}\in L^{\infty}(D;\mathbb{R}^m)$. Then, $L^p-$ and $C^\alpha-$ theory give that $\mathbf{u}\in W^{2,p}_{loc} (D;\mathbb{R}^m)\cap C^{1,\alpha}_{loc}(D;\mathbb{R}^m)$ for $p\in[1,+\infty)$, $\alpha\in (0,1)$. Thus, we can define strong solution in $\{\mathbf{u}=\mathbf{0}\}$.
	i.e. $\Delta\mathbf{u}=\mathbf{0}$ a.e. in $\{\mathbf{u}=\mathbf{0}\}$.
	
	In another case $\{|\mathbf{u}|>\delta>0\}$. Together with the assumption \eqref{eq1.4} and the Taylor expansion of $F$, we have
	\begin{equation}\label{eq2.5}
		0\leq  \epsilon\int_D 2\nabla\mathbf{u}\cdot \nabla\boldsymbol{\phi}dx+\epsilon^2\int_D|\nabla\boldsymbol{\phi}|^2dx+ \int_D F'(|\mathbf{u}|)\cdot \frac{\mathbf{u}}{|\mathbf{u}|}\cdot \epsilon \boldsymbol{\phi}+ o(\epsilon) dx.
	\end{equation}
	Dividing by $\epsilon$ and letting $\epsilon\rightarrow 0$, then $\mathbf{u}$ satisfies that
	\begin{equation}\nonumber
		\Delta\mathbf{u}=F'(|\mathbf{u}|)\frac{\mathbf{u}}{2|\mathbf{u}|}=f(|\mathbf{u}|)\frac{\mathbf{u}}{|\mathbf{u}|}  \quad \text{in}\quad \{|\mathbf{u}|>\delta>0\},
	\end{equation}
	where $f(s)=\frac{1}{2}F'(s)$. Therefore, combining the above two situations, it gives that
	\begin{equation*}
		\Delta\mathbf{u}=f(|\mathbf{u}|)\frac{\mathbf{u}}{|\mathbf{u}|} \chi_{\{|\mathbf{u}|>0\}}  \quad \text{in}\quad D.
	\end{equation*}

	Now, we shall prove the uniqueness of solution to equation \eqref{eq1.1}. Assume there exist different solutions $\mathbf{v},\mathbf{u} \in W^{1,2}(D;\mathbb{R}^m)$ with the same boundary data $\mathbf{u}_0$ such that
	\begin{equation}\nonumber
		\begin{cases}
			\Delta\mathbf{u}=f(|\mathbf{v}|)\displaystyle\frac{\mathbf{v}}{|\mathbf{v}|} \chi_{\{|\mathbf{v}|>0\}}  \quad \text{in}\quad D,\\
			\Delta\mathbf{u}=f(|\mathbf{u}|)\displaystyle\frac{\mathbf{u}}{|\mathbf{u}|} \chi_{\{|\mathbf{u}|>0\}}  \quad \text{in}\quad D.
		\end{cases}
	\end{equation}
	Then integrating by parts, it yields that $\mathbf{v}$ and $\mathbf{u}$ satisfy the following weak equation
	\begin{equation}\nonumber
		\begin{cases}
			\int_D \nabla\mathbf{v}\cdot \nabla\boldsymbol{\phi}+ f(|\mathbf{v}|) \displaystyle\frac{\mathbf{v}}{|\mathbf{v}|}\chi_{\{|\mathbf{v}|>0\}}\cdot \boldsymbol{\phi} dx=0,\\
			\int_D \nabla\mathbf{u}\cdot \nabla\boldsymbol{\phi}+ f(|\mathbf{u}|) \displaystyle\frac{\mathbf{u}}{|\mathbf{u}|}\chi_{\{|\mathbf{u}|>0\}}\cdot \boldsymbol{\phi} dx=0.
		\end{cases}
	\end{equation}
	Subtract these two equations, and let $\boldsymbol{\phi}=\mathbf{v}-\mathbf{u}$ we can see that
	\begin{equation*}
		\begin{aligned}
			0\leq& \int_D \left(\nabla(\mathbf{v}-\mathbf{u})\right)^2 dx\\
			=&-\int_D\big(f(|\mathbf{u}|) \frac{\mathbf{u}}{|\mathbf{u}|}\chi_{\{|\mathbf{u}|>0\}}- f(|\mathbf{v}|) \frac{\mathbf{v}}{|\mathbf{v}|}\chi_{\{|\mathbf{v}|>0\}}\big) \cdot (\mathbf{u}-\mathbf{v}) dx \\
			=&-\int_{D\cap\{|\mathbf{u}\neq 0|\}\cap\{|\mathbf{v}\neq 0|\}}\big(f(|\mathbf{u}|) \frac{\mathbf{u}}{|\mathbf{u}|}\chi_{\{|\mathbf{u}|>0\}}- f(|\mathbf{v}|) \frac{\mathbf{v}}{|\mathbf{v}|}\chi_{\{|\mathbf{v}|>0\}}\big) \cdot (\mathbf{u}-\mathbf{v})dx\\
			&-\int_{D\cap\left\{\{|\mathbf{u}= 0|\}\cup\{|\mathbf{v}= 0|\}\right\}}\big(f(|\mathbf{u}|) \frac{\mathbf{u}}{|\mathbf{u}|}\chi_{\{|\mathbf{u}|>0\}}- f(|\mathbf{v}|) \frac{\mathbf{v}}{|\mathbf{v}|}\chi_{\{|\mathbf{v}|>0\}}\big) \cdot (\mathbf{u}-\mathbf{v})dx\\
			\leq&-\int_{D\cap\{|\mathbf{u}\neq 0|\}\cap\{|\mathbf{v}\neq 0|\}}\big(f(|\mathbf{u}|) \frac{\mathbf{u}}{|\mathbf{u}|}\chi_{\{|\mathbf{u}|>0\}}- f(|\mathbf{v}|) \frac{\mathbf{v}}{|\mathbf{v}|}\chi_{\{|\mathbf{v}|>0\}}\big) \cdot (\mathbf{u}-\mathbf{v})dx,
		\end{aligned}
	\end{equation*}
	and a direct computation gives that in $D\cap\{|\mathbf{u}\neq 0|\}\cap\{|\mathbf{v}\neq 0|\}$,
	\begin{equation*}
		\begin{aligned}
			&\big(f(|\mathbf{u}|) \frac{\mathbf{u}}{|\mathbf{u}|}- f(|\mathbf{v}|) \frac{\mathbf{v}}{|\mathbf{v}|}\big) \cdot (\mathbf{u}-\mathbf{v})\\
			=&f(|\mathbf{u}|)|\mathbf{u}| +f(|\mathbf{v}|) |\mathbf{v}|-\frac{f(|\mathbf{u}|)|\mathbf{v}|+f(|\mathbf{v}|)|\mathbf{u}|}{|\mathbf{u}||\mathbf{v}|}\mathbf{u}\cdot \mathbf{v}\\
			\leq&f(|\mathbf{u}|)|\mathbf{u}| +f(|\mathbf{v}|) |\mathbf{v}|-\frac{f(|\mathbf{u}|)|\mathbf{v}|+f(|\mathbf{v}|)|\mathbf{u}|}{|\mathbf{u}||\mathbf{v}|}|\mathbf{u}\cdot \mathbf{v}|\\
			\leq&f(|\mathbf{u}|) |\mathbf{u}|+f(|\mathbf{v}|) |\mathbf{v}|-f(|\mathbf{u}|) |\mathbf{v}|- f(|\mathbf{v}|) |\mathbf{u}|.\\
		\end{aligned}
	\end{equation*}
	Therefore, it shows that
	\begin{equation*}
		\begin{aligned}
			0\leq& \int_D \left(\nabla(\mathbf{v}-\mathbf{u})\right)^2dx\\
			\leq&-\int_{D\cap\{|\mathbf{u}\neq 0|\}\cap\{|\mathbf{v}\neq 0|\}} f(|\mathbf{u}|) |\mathbf{u}|+f(|\mathbf{v}|) |\mathbf{v}|-f(|\mathbf{u}|) |\mathbf{v}|- f(|\mathbf{v}|) |\mathbf{u}| dx\\
			=&-\int_{D\cap\{|\mathbf{u}\neq 0|\}\cap\{|\mathbf{v}\neq 0|\}} f(|\mathbf{u}|)(|\mathbf{u}|-|\mathbf{v}|)- f(|\mathbf{v}|)(|\mathbf{u}|-|\mathbf{v}|)dx\\
			=&-\int_{D\cap\{|\mathbf{u}\neq 0|\}\cap\{|\mathbf{v}\neq 0|\}}(|\mathbf{u}|-|\mathbf{v}|)\left(f(|\mathbf{u}|)-f(|\mathbf{v}|)\right)dx\\
			\leq& 0,
		\end{aligned}
	\end{equation*}
	where using the convexity of $F$, we arrive at $(|\mathbf{u}|-|\mathbf{v}|)(f(|\mathbf{u}|)-f(|\mathbf{v}|))\geq 0$. Consequently, we infer that $\mathbf{u}=\mathbf{v}$, i.e. we obtain the uniqueness of solution to the system \eqref{eq1.1}.
	\vspace{5pt}

	\section{Properties of the Laplace-Beltrami operator }
	We will provide some properties of the Laplace-Beltrami operator on the unit sphere in $\mathbb{R}^n$, which help us understand the property (Proposition \ref{prop4.1}) of global solutions of this system.
	
	\begin{lemma}\label{laplace}
		Let $\Delta'$ be the Laplace-Beltrami operator on the unit sphere in $\mathbb{R}^n$, let the domain $\Omega' \subset \partial B_1(0)\subset \mathbb{R}^n$, let $\mathcal{L}:=-\Delta'+q $ where $q\in C^0(\Omega')$ such that $q \geq q_0>0$ in $\Omega'$, and let $\lambda_k(\mathcal{L},\Omega')$ denote the $k-th$ eigenvalue with respect to the eigenvalue problem
		\begin{equation}
			\begin{aligned}
				\mathcal{L}v&=\lambda v \quad \text{in} \quad \Omega'\\
				v&=0 \quad \text{on} \quad \partial \Omega'
			\end{aligned}\nonumber
		\end{equation}
		here $\partial \Omega' $ denotes the boundary of $\Omega' $ relative to $\partial B_1$.
		\begin{enumerate}
			\item If $\bar{\Omega}'\subset\Omega'$ then $\lambda_k(\mathcal{L},\bar{\Omega}')\geq \lambda_k(\mathcal{L},\Omega')$ for every $k\in \mathbb{N}.$ For $k=1$ the inequality is strict.
			\item $\lambda_k(\mathcal{L},\Omega')\geq q_0 + \lambda_k(-\Delta', \Omega')$ for every $k\in \mathbb{N}$; in case $q\neq q_0$ the inequality becomes a strict inequality.
			\item $q=\displaystyle\frac{1}{2h} f'(0)$ and $\Omega'\subset \partial B_1 \cap \{x_n>0\}$ and $v\in W^{1,2}(\partial B_1)$ satisfies $$\mathcal{L}v=2nv \quad \text{in} \quad {\Omega'},$$
			which implies $v=ah$ for some real number $a\neq 0$. Here $h(x)=\frac{1}{2}\max(x_n,0)^2$
			\item $\Omega'\subset \partial B_1(0) \cap \{x_n>-\delta(n,q_0)\}$ and $v\in W^{1,2}(\partial B_1(0))$ satisfies $$\mathcal{L}v=2nv \quad \text{in} \quad {\Omega'}, $$
			that there exists a function $$f_{\Omega'}\in\{f | \ \mathcal{L} f=\lambda_1(\mathcal{L}, \Omega') f \  \text{in}\ \Omega', \ \text{and}\  f>0\},$$ where $\lambda_1(\mathcal{L}, \Omega')$ is the first eigenvalue, and $f_{\Omega'}$ depending only on $\Omega'$, such that $v=af_{\Omega'}$ for a real number $a\neq 0$. 
		\end{enumerate}
	\end{lemma}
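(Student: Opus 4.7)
The plan is to treat the four items in sequence, organizing the argument around the Courant--Fischer minimax characterization and the simplicity of the principal eigenvalue on a bounded domain.

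For part (1), I would recall the variational formula
\[
\lambda_k(\mathcal{L},\Omega')\;=\;\inf_{\substack{V\subset W^{1,2}_0(\Omega')\\ \dim V=k}}\;\sup_{v\in V\setminus\{0\}}\;\frac{\int_{\Omega'}\bigl(|\nabla_\theta v|^2+qv^2\bigr)\,d\mathcal{H}^{n-1}}{\int_{\Omega'}v^2\,d\mathcal{H}^{n-1}}.
\]
Whenever $\bar\Omega'\subset\Omega'$, the map ``extend by zero'' embeds $W^{1,2}_0(\bar\Omega')$ into $W^{1,2}_0(\Omega')$ while preserving the Rayleigh quotient, so every admissible $k$-dimensional subspace on $\bar\Omega'$ is admissible on $\Omega'$; taking infima gives $\lambda_k(\mathcal{L},\bar\Omega')\ge\lambda_k(\mathcal{L},\Omega')$. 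For $k=1$ the strict inequality follows from the strong maximum principle: the first eigenfunction on $\Omega'$ is strictly positive in $\Omega'$, so it cannot arise as a zero-extension from a strict subdomain, ruling out equality.

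Part (2) is then an immediate Rayleigh-quotient estimate: the pointwise bound $q\ge q_0$ gives
\[
\frac{\int_{\Omega'}|\nabla_\theta v|^2+qv^2}{\int_{\Omega'}v^2}\;\ge\;q_0+\frac{\int_{\Omega'}|\nabla_\theta v|^2}{\int_{\Omega'}v^2},
\]
and passing to the minimax yields $\lambda_k(\mathcal{L},\Omega')\ge q_0+\lambda_k(-\Delta',\Omega')$. When $q\not\equiv q_0$ I would test with the first eigenfunction $\phi_1$ of $-\Delta'$ on $\Omega'$, which is strictly positive, so that $\int_{\Omega'}(q-q_0)\phi_1^2>0$ upgrades the inequality to a strict one.

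For part (3), the plan is first to verify directly that $h$ restricted to $\partial B_1$ solves $\mathcal{L}h=2nh$. Viewing $H(x)=\tfrac12\max(x_n,0)^2$ as a $2$-homogeneous function on $\mathbb{R}^n$ and using the polar decomposition $\Delta=r^{1-n}\partial_r(r^{n-1}\partial_r)+r^{-2}\Delta'$, one computes $\Delta H=\chi_{\{x_n>0\}}$, and matching the radial and spherical parts at $r=1$ gives $-\Delta'h=1-2nh$ on $\partial B_1\cap\{x_n>0\}$; substituting the prescribed $q$ then yields $\mathcal{L}h=2nh$. The uniqueness step uses Krein--Rutman: since $h>0$ throughout $\Omega'\subset\partial B_1\cap\{x_n>0\}$ with the appropriate vanishing on $\partial\Omega'$, $h$ is (up to a positive scalar) the unique principal eigenfunction of $\mathcal{L}$ on $\Omega'$ with eigenvalue $\lambda_1(\mathcal{L},\Omega')=2n$. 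Any solution of $\mathcal{L}v=2nv$ in the application-dictated function space therefore lies in that one-dimensional eigenspace, giving $v=ah$ for some $a\neq 0$.

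For part (4), I would extend this argument by a domain perturbation. Choose $\delta=\delta(n,q_0)>0$ small enough that, for every $\Omega'\subset\partial B_1\cap\{x_n>-\delta\}$, the principal eigenvalue remains simple and quantitatively close to the hemispherical value $2n$; continuous dependence of the principal eigenpair on $\Omega'$ (via the minimax formula together with part~(2)) supplies this smallness, and Krein--Rutman then identifies the unique positive first eigenfunction $f_{\Omega'}$. The hypothesis $\mathcal{L}v=2nv$ forces $v$ to lie in the one-dimensional span of $f_{\Omega'}$, so $v=af_{\Omega'}$ with $a\neq 0$ as required. The main obstacle will be in parts~(3)--(4): one has to control the singular potential $q\sim 1/h$ near $\{x_n=0\}$ so that the variational framework and Krein--Rutman apply, and in (4) one needs the right quantitative statement about how the first eigenvalue and eigenfunction deform under small enlargements of the hemisphere. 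Beyond these points, the argument reduces to standard spectral theory.
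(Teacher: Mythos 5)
The paper does not actually prove this lemma: Appendix~B only states it and then, in Remark~B.2, asserts that the argument of the Appendix of \cite{asu} ``can be further extended'' to the potential $q$ at hand. So there is no in-paper proof to compare against; what you are being asked for is essentially a reconstruction of the argument the authors are invoking from \cite{asu}, and your outline (Courant--Fischer minimax for (1)--(2), explicit computation plus Krein--Rutman for (3), and a domain-perturbation / eigenvalue-continuity argument for (4)) is the right skeleton and matches what one expects that appendix to contain.

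Two concrete issues in the sketch, however, should be fixed. First, the sign in part~(3) is reversed: from $H(r\theta)=r^2h(\theta)$ one gets $\Delta H=2nh+\Delta'h$, so $1=2nh+\Delta'h$ on the upper hemisphere gives $-\Delta'h=2nh-1$, not $1-2nh$. Your version would force $qh=4nh-1$ rather than $qh=1$, i.e., it would not reproduce the prescribed singular potential $q\sim 1/h$ and the verification $\mathcal{L}h=2nh$ would fail; with the sign corrected everything matches. Second, the strict-inequality step in part~(2) tests the Rayleigh quotient against the first eigenfunction $\phi_1$ of $-\Delta'$, but that only produces an \emph{upper} bound on $\lambda_1(\mathcal{L},\Omega')$ and so cannot separate it from $q_0+\lambda_1(-\Delta',\Omega')$. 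You should instead test with the (positive) first eigenfunction $\psi_1$ of $\mathcal{L}$ itself: then $\lambda_1(\mathcal{L},\Omega')\geq \lambda_1(-\Delta',\Omega')+q_0+\int(q-q_0)\psi_1^2/\int\psi_1^2$, and the last term is strictly positive whenever $q\not\equiv q_0$. Also note the statement claims strictness for every $k$; your argument only treats $k=1$, and for $k>1$ one has to run the minimax more carefully since higher eigenfunctions change sign. Finally, in part~(4) the point you gloss over is the one doing the real work: one needs a quantitative bound $\lambda_2(\mathcal{L},\Omega')>2n$ for $\Omega'$ contained in a slight enlargement of a hemisphere, which (together with the existence of a nontrivial $v$) forces $2n=\lambda_1(\mathcal{L},\Omega')$ and hence $v$ to lie in the one-dimensional principal eigenspace; ``continuous dependence'' as stated is too soft to deliver this without pinning down that estimate.
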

	\begin{remark}
		For $q=\frac{1}{h} f(0)$, the eigenvalue problem on the sphere becomes
		\begin{equation}
			\begin{aligned}
				-\Delta'v +\displaystyle\frac{2}{\cos^2\theta}v&=2nv \quad &\text{in}&\quad \Omega'\subset \{x_n>0\},\\
				v&=0 \quad &\text{on}& \quad \partial \Omega'.
			\end{aligned}\nonumber
		\end{equation}
		Indeed, $q=\displaystyle\frac{1}{h} f(0)=f(0)\cdot \displaystyle\frac{2}{f(0)x_n^2}=\displaystyle\frac{2}{x_n^2} =\displaystyle\frac{2}{\cos^2\theta}$.
		Thus \cite[Appendix]{asu} can be further extended to the related this $q$.
	\end{remark}

	\section*{Conflicts of interest/Competing interests:}
	The authors declare that they have no conflict of interest/Competing interest.

\end{document}